\newtheorem{theorem}{Theorem}[section]
\newtheorem{proposition}[theorem]{Proposition}
\newtheorem{remark}[theorem]{Remark}
\newtheorem{lemma}[theorem]{Lemma}
\newtheorem{definition}[theorem]{Definition}
\numberwithin{equation}{section}
\begin{document}

\title[Lipeng Luo\textsuperscript{1}, Yanyong Hong\textsuperscript{2} and Zhixiang Wu\textsuperscript{3}]{Extensions of finite irreducible modules of Lie conformal algebras $\mathcal{W}(a,b)$ and some Schr\"{o}dinger-Virasoro type Lie conformal algebras}

\author{Lipeng Luo\textsuperscript{1}, Yanyong Hong\textsuperscript{2} and Zhixiang Wu\textsuperscript{3}}
\address{\textsuperscript{1,3}School of Mathematical Sciences, Zhejiang University, Hangzhou, Zhejiang Province,310027,PR China.}
\address{\textsuperscript{2}Department of Mathematics, Hangzhou Normal University,
Hangzhou, 311121, P.R.China.}

\email{\textsuperscript{1}luolipeng@zju.edu.cn}
\email{\textsuperscript{2}hongyanyong2008@yahoo.com}
\email{\textsuperscript{3}wzx@zju.edu.cn}
%
\keywords{conformal algebra, conformal module, extensions}
\subjclass[2010]{17B10, 17B65, 17B68}

\date{\today}
\thanks{ This work was supported by the National Natural Science Foundation of China (No. 11871421, 11501515) and the Scientific Research Foundation of Hangzhou Normal University (No. 2019QDL012)}

\begin{abstract}
Lie conformal algebras $\mathcal{W}(a,b)$ are the semi-direct sums of Virasoro Lie conformal algebra and its nontrivial conformal modules of rank one. In this paper, we give a complete classification of extensions of finite irreducible conformal modules of $\mathcal{W}(a,b)$.  With a similar method, we characterize all extensions of finite irreducible conformal modules of Schr\"{o}dinger-Virasoro type Lie conformal algebras $TSV(a,b)$ and $TSV(c)$.
\end{abstract}

\footnote{The second author is the corresponding author.}
\maketitle

\section{Introduction}\label{intro}
The notion of a Lie conformal algebra, which was introduced by Kac in \cite{KacV, KacF}, represents an axiomatic description of the operator product expansion (or rather its Fourier transform) of chiral fields in conformal field theory (see \cite{BPZ}). It has been shown that the theory of Lie conformal algebras has close connections to vertex algebras, infinite-dimensional Lie algebras satisfying the locality property in \cite{KacL} and Hamiltonian formalism in the theory of nonlinear evolution equations (see \cite{BDK}). It is known that Virasoro Lie conformal algebra $Vir$ and current Lie conformal algebra $Cur\mathcal{G}$ associated to a finite-dimensional simple Lie algebra $\mathcal{G}$ exhaust all finite simple Lie conformal algebras (see \cite{DK}). Moreover, all finite irreducible conformal modules of finite simple Lie conformal algebras were characterized in \cite{CK}. In general, conformal modules of Lie conformal algebras including finite simple Lie conformal algebras are not completely reducible.
Therefore, it is necessary to investigate the extension problem of finite irreducible conformal modules of Lie conformal algebras. Extensions between finite irreducible conformal modules over the Virasoro, the current, the Neveu-Schwarz and the semi-direct sum of the Virasoro and the current conformal algebras were classified by Cheng, Kac and Wakimoto in \cite{CKW1,CKW2}. Ngau Lam in \cite{LN} classified extensions between finite irreducible conformal modules over the supercurrent conformal algebras by using the techniques developed in \cite{CKW1}.

In this paper, we  investigate extensions of finite irreducible conformal modules of Lie conformal algebras $\mathcal{W}(a,b)$, $TSV(a,b)$ and $TSV(c)$, where $\mathcal{W}(a,b)$ is a semi-direct sum of $Vir$ and its nontrivial conformal modules of rank one, $TSV(a,b)$ and $TSV(c)$ are two classes of Schr\"{o}dinger-Virasoro type Lie conformal algebras introduced in \cite{Hong}.  Note that $\mathcal{W}(1-b,0)$ is just the Lie conformal algebra $\mathcal{W}(b)$ in \cite{Xu-Yue},  $\mathcal{W}(1,0)$ is just the Heisenberg-Virasoro Lie conformal algebra,  $TSV(\frac{3}{2},0)$ is just the Schr\"{o}dinger-Virasoro Lie conformal algebra in \cite{Su-Yuan}
and $TSV(0,0)$ is just the Schr\"{o}dinger-Virasoro type Lie conformal algebra in \cite{Wang-Xu-Xia}. Finite irreducible conformal modules of $\mathcal{W}(1,0)$ and $\mathcal{W}(1-b,0)$ were classified in \cite{Wu-Yuan}.  In \cite{Luo-Hong-Wu}, we  gave  a complete classification of finite irreducible conformal modules of  $\mathcal{W}(a,b)$, $TSV(a,b)$ and $TSV(c)$. In \cite{Ling-Yuan1, Ling-Yuan2, Yuan-Ling}, Ling and Yuan classified all extensions of finite irreducible conformal modules over $\mathcal{W}(1,0)$ , $\mathcal{W}(1-b,0)$ and $TSV(\frac{3}{2},0)$. In this paper, we deal with the same problem for $\mathcal{W}(a,b)$, $TSV(a,b)$ and $TSV(c)$. According to the definitions of  $TSV(a,b)$ and $TSV(c)$ (see Definition \ref{deff1}), it is easy to see that $\mathcal{W}(a,b)$ is isomorphic to $TSV(a,b)/\mathbb{C}[\partial]M$ and $\mathcal{W}(\frac{3}{2},c)$ is isomorphic to $TSV(c)/\mathbb{C}[\partial]M$. So, the extensions of finite irreducible conformal modules of $TSV(a,b)$ and $TSV(c)$ are closely related with those of $\mathcal{W}(a,b)$. Therefore, we first investigate all extensions of finite irreducible conformal modules of $\mathcal{W}(a,b)$ and then give a complete classification of all extensions of finite irreducible conformal modules of $TSV(a,b)$ and $TSV(c)$.

The rest of this paper is organized as follows. In Section 2, we introduce some basic definitions, notations, and related known results about Virasoro Lie conformal algebra $Vir$. In Section 3, we first recall all finite nontrivial irreducible conformal modules of $\mathcal{W}(a,b)$. Then we give a complete classification of all extensions of finite irreducible conformal modules of $\mathcal{W}(a,b)$. In Section 4, we recall all finite nontrivial irreducible conformal modules over Lie conformal algebras $TSV(a,b)$ and $TSV(c)$. Moreover, we also classify all extensions of finite irreducible conformal modules over them by using the results and methods given in Section 3.

Throughout this paper, we use $\mathbb{C}$ to represent the set of complex numbers. In addition, all vector spaces and tensor products are over $\mathbb{C}$.

\section{preliminaries}

 In this section, we recall some basic definitions and related results about Lie conformal algebras and fix some notations for later use. For a detailed description, one can refer to \cite{CK,CKW1,KacV,Luo-Hong-Wu}.
\begin{definition}
\begin{em}
A \emph {Lie conformal algebra} $\mathcal{R}$ is a $\mathbb{C}[\partial]$-module endowed with a $\mathbb{C}$-linear map from $\mathcal{R}\otimes\mathcal{R}$ to $\mathcal{R}[\lambda], a\otimes b \mapsto [a_\lambda b]$, called the $\lambda$-bracket, satisfying the following axioms:
\begin{align}
    [\partial a_\lambda b]&=-\lambda[a_\lambda b],\quad  [ a_\lambda \partial b]=(\partial+\lambda)[a_\lambda b] \quad (conformal \  sesquilinearity),\\
    {}[a_\lambda b]&=-[b_{-\lambda-\partial} a]\quad (skew\text{-}symmetry),\\
    {}[a_\lambda [b_\mu c]]&=[[a_\lambda b]_{\lambda+\mu} c]+[ b_\mu [a_\lambda c]]\quad (Jacobi\  identity),
\end{align}
for $a,b,c \in\mathcal{R}$.
\end{em}		
\end{definition}

A Lie conformal algebra $\mathcal{R}$ is called \emph {finite} if $\mathcal{R}$ is finitely generated as a $\mathbb{C}[\partial]$-module. The \emph {rank} of a Lie conformal algebra $\mathcal{R}$, denoted by rank($\mathcal{R}$), is its rank as a $\mathbb{C}[\partial]$-module.

\begin{definition}\label{Def22}
\begin{em}
A \emph {conformal module} $M$ over a Lie conformal algebra $\mathcal{R}$ is a $\mathbb{C}[\partial]$-module endowed with a $\mathbb{C}$-linear map $\mathcal{R}\otimes M \rightarrow M[\lambda], a\otimes v \mapsto a_\lambda v$, satisfying the following conditions:
\begin{align}
    &(\partial a)_\lambda v=-\lambda a_\lambda v,\quad  a_\lambda (\partial v)=(\partial+\lambda)a_\lambda v, \quad \\
    &{}a_\lambda (b_\mu v)- b_\mu (a_\lambda v)=[a_\lambda b]_{\lambda+\mu} v,\quad
\end{align}
for $a,b \in\mathcal{R}, v\in M$.
\end{em}	

Suppose $M$, $N$ are two $\mathcal{R}$-modules. Then a $\mathbb{C}[\partial]$-module homomorphism $\varphi$ from $M$ to $N$  is said to be a homomorphism of $\mathcal{R}$-modules if $\varphi(a_{\lambda}m)=a_{\lambda}\varphi(m)$ for all $m\in M$ and $a\in \mathcal{R}$.

\end{definition}

A conformal module $M$ over a Lie conformal algebra $\mathcal{R}$  is also called a representation of $\mathcal{R}$, or an $\mathcal{R}$-module. If $M$ is finitely generated over $\mathbb{C}[\partial]$, then
it  is simply called \emph {finite}. Furthermore, if $M$ is free over $\mathbb{C}[\partial]$ and finite, then  the \emph {rank} of $M$ is its rank as a $\mathbb{C}[\partial]$-module. A conformal module $M$ is said to be  \emph {irreducible} if it has no nonzero submodules $N$ such that $N\neq M$.

Let $\mathcal{R}$ be a Lie conformal algebra and $M$ an $\mathcal{R}$-module. An element $m\in M$ is called \emph {invariant} if $\mathcal{R}_\lambda m=0$. Obviously, the set of all invariants of $M$ is a conformal submodule of $M$, denoted by $M^0$. An $\mathcal{R}$-module $M$ is called trivial if $M^0=M$, i.e., a module on which $\mathcal{R}$ acts trivially. For any $\eta \in \mathbb{C}$, we obtain a natural trivial  $\mathcal{R}$-module $\mathbb{C}c_{\eta}$, which is determined by $\eta$, such that $\mathbb{C}c_{\eta}=\mathbb{C}$ and $\partial c_{\eta}=\eta c_{\eta}, \mathcal{R}_\lambda c_{\eta}=0$. It is easy to check that the modules $\mathbb{C}c_{\eta}$ with $\eta \in \mathbb{C}$ exhaust all trivial irreducible $\mathcal{R}$-modules.

\begin{definition}
\begin{em}
Let $V$ and $W$ be two modules over a Lie conformal algebra (or a Lie algebra) $\mathcal{R}$. An \emph {extension} of $W$ by $V$ is an exact sequence of $\mathcal{R}$-modules of the form
\begin{align}
0\longrightarrow V \stackrel{i}{\longrightarrow} E \stackrel{p}{\longrightarrow}W \longrightarrow 0,
\end{align}
where $E$ is isomorphic to $V\oplus W$ as a vector space.
Two extensions $0\longrightarrow V \stackrel{i}{\longrightarrow} E \stackrel{p}{\longrightarrow}W \longrightarrow 0$ and $0\longrightarrow V \stackrel{i'}{\longrightarrow} E' \stackrel{p'}{\longrightarrow}W \longrightarrow 0$ are said to be equivalent if there exists a homomorphism of modules such that the following diagram commutes
\begin{align}
  \begin{CD}
  0 @>>> V   @>i>>   E   @>p>>   W   @>>>0  \\
     @.      @V{1_V}VV       @V{\Psi}VV       @V{1_W}VV  @.    \\
  0 @>>> V   @>i'>>   E'   @>p'>>   W  @>>>0.
  \end{CD}
\end{align}

\end{em}		
\end{definition}

Obviously, the direct sum of modules $V\oplus W$ gives rise to an extension $0\to V\to V\oplus W\to W\to 0$.  Any extension $0\to V\to E\to W\to 0 $, which is equivalent to  $0\rightarrow V\to V\oplus W\to W\to 0$, is  called \emph{trivial extensions}.

In general, taking Lie algebra as an example, an extension can be thought of as the direct sum of vector spaces $E=V\oplus W$, where $V$ is a submodule of $E$, while for $w\in W$ we have:
\begin{align*}
a\cdot w=aw+f_a(w),\   a\in \mathcal{R},
\end{align*}
where $f_a: W\to V$ is a linear map satisfying the cocycle condition:
\begin{align*}
f_{[a,b]}(w)=f_a(bw)+af_b(w)-f_b(aw)-bf_a(w),\   b\in \mathcal{R}.
\end{align*}
The set of these cocycles forms a vector space $\mathcal{E}xt(W,V)$ over $\mathbb{C}$. Cocycles equivalent to trivial extension are called \emph{coboundaries}. They form a subspace $\mathcal{E}xt^c(W,V)$  and the quotient space $\mathcal{E}xt(W,V)/\mathcal{E}xt^c(W,V)$ is denoted by $Ext(W,V)$.

It was shown in \cite{CK} that
 \begin{proposition}\label{pr1} Let $Vir=\mathbb{C}[\partial]L$ be the Virasoro Lie conformal algebra. Then
 all free nontrivial $Vir$-modules of rank one over $\mathbb{C}[\partial]$ are as follows($ \alpha,\beta \in \mathbb{C}$):
  \begin{equation}
  \xymatrix{M_{\alpha,\beta}=\mathbb{C}[\partial]v,\qquad L_\lambda v=(\partial+\alpha\lambda+\beta)v.}
 \end{equation}
 Moreover, the module $M_{\alpha,\beta}$ is irreducible if and only if $\alpha$ is non-zero. The module $M_{0,\beta}$ contains a unique nontrivial submodule $ (\partial+\beta)M_{0,\beta}$ isomorphic to $M_{1,\beta}$. The modules $M_{\alpha,\beta}$ with $\alpha\neq 0$ exhaust all finite irreducible nontrivial $Vir$-modules.
 \end{proposition}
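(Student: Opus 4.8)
The plan is to establish the three assertions in turn. \emph{Rank-one free modules.} If $M=\mathbb{C}[\partial]v$ carries a $Vir$-action, that action is completely determined by a single polynomial $P(\partial,\lambda)\in\mathbb{C}[\partial,\lambda]$ via $L_\lambda v=P(\partial,\lambda)v$; conformal sesquilinearity puts no constraint on $P$, while the module axiom $L_\lambda(L_\mu v)-L_\mu(L_\lambda v)=[L_\lambda L]_{\lambda+\mu}v$ together with $[L_\lambda L]=(\partial+2\lambda)L$ becomes the functional identity
\[
P(\partial+\lambda,\mu)\,P(\partial,\lambda)-P(\partial+\mu,\lambda)\,P(\partial,\mu)=(\lambda-\mu)\,P(\partial,\lambda+\mu).
\]
For a nontrivial module $P\neq 0$, and the case $\deg_\partial P=0$ is quickly excluded, so $\deg_\partial P\geq 1$. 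The decisive step is a degree count in $\lambda$: the left-hand side has $\lambda$-degree $\deg_\partial P+\deg_\lambda P$ (the top term comes from the first summand and does not cancel), while the right-hand side has $\lambda$-degree $\deg_\lambda P+1$; hence $\deg_\partial P=1$, so $P(\partial,\lambda)=A(\lambda)+B(\lambda)\partial$ with $B(0)=1$ and $A(0)=:\beta$. Plugging this back in and separating powers of $\partial$ reduces the identity to $B(\lambda)B(\mu)=B(\lambda+\mu)$ and $\lambda A(\lambda)-\mu A(\mu)=(\lambda-\mu)A(\lambda+\mu)$, whose only polynomial solutions are $B\equiv 1$ and $A(\lambda)=\beta+\alpha\lambda$. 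Thus $P(\partial,\lambda)=\partial+\alpha\lambda+\beta$, and conversely each such $P$ visibly defines a module; this is exactly the family $M_{\alpha,\beta}$.

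\emph{Irreducibility and the structure of $M_{0,\beta}$.} Since $\mathbb{C}[\partial]$ is a PID, any nonzero $\mathbb{C}[\partial]$-submodule of $M_{\alpha,\beta}=\mathbb{C}[\partial]v$ equals $I(\partial)\mathbb{C}[\partial]v$ for some nonzero $I$, and it is $Vir$-stable exactly when $I(\partial)$ divides $I(\partial+\lambda)(\partial+\alpha\lambda+\beta)$ in $\mathbb{C}[\partial,\lambda]$. Comparing the roots of $I$ with the roots $r_i-\lambda$ of $I(\partial+\lambda)$ and with $-\alpha\lambda-\beta$ over $\mathbb{C}(\lambda)$, one sees that $I$ can have no root unless $\alpha=0$, in which case $I$ must be a scalar multiple of $\partial+\beta$. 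Hence $M_{\alpha,\beta}$ is irreducible for $\alpha\neq 0$, and $M_{0,\beta}$ has the single proper nonzero submodule $(\partial+\beta)M_{0,\beta}$; the identity $L_\lambda\big((\partial+\beta)v\big)=(\partial+\lambda+\beta)(\partial+\beta)v$ shows that sending the generator of $M_{1,\beta}$ to $(\partial+\beta)v$ is an isomorphism $M_{1,\beta}\cong(\partial+\beta)M_{0,\beta}$.

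\emph{These exhaust the finite irreducible nontrivial modules.} Let $M$ be finite, irreducible and nontrivial. First I would show that its $\mathbb{C}[\partial]$-torsion submodule $\mathrm{Tor}(M)$ is acted on trivially by $Vir$: from $p(\partial)m=0$ with $p\neq 0$ one gets $p(\partial+\lambda)(L_\lambda m)=L_\lambda(p(\partial)m)=0$, and comparing the top $\lambda$-coefficient forces $L_\lambda m=0$; hence $\mathrm{Tor}(M)\subseteq M^0$, which is a proper submodule and therefore $0$. Thus $M$ is torsion-free and finite over the PID $\mathbb{C}[\partial]$, so it is free of some rank $n\geq 1$, and the first part then identifies $M$ with some $M_{\alpha,\beta}$ ($\alpha\neq 0$) as soon as we know $n=1$. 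The hard part is exactly this: ruling out $n\geq 2$. Here I would pass to the annihilation Lie algebra of $Vir$ and its action on $M/\partial M$ and argue, as in \cite{CK}, that a free finite $Vir$-module of rank $\geq 2$ always contains a proper nonzero submodule. This reduction to rank one is the genuinely substantial point; everything else is the elementary computation sketched above.
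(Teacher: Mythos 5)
Your proposal is sound as far as it goes, but it is worth noting that the paper offers no proof of this proposition at all: it is quoted verbatim from Cheng--Kac \cite{CK}, so there is no argument in the text to compare against. Relative to that, you actually do more: your functional-equation analysis of $P(\partial,\lambda)$ (the degree count in $\lambda$ forcing $\deg_\partial P=1$, then the reduction to $B(\lambda)B(\mu)=B(\lambda+\mu)$ and $\lambda A(\lambda)B(\mu)-\mu A(\mu)B(\lambda)=(\lambda-\mu)A(\lambda+\mu)$, which after $B\equiv 1$ gives $A(\lambda)=\beta+\alpha\lambda$ --- you dropped the $B$ factors prematurely, but this is cosmetic) is a correct and standard elementary derivation of the family $M_{\alpha,\beta}$, and your root/multiplicity argument for submodules $I(\partial)\mathbb{C}[\partial]v$ correctly yields irreducibility for $\alpha\neq 0$ and the unique submodule $(\partial+\beta)M_{0,\beta}\cong M_{1,\beta}$; the torsion argument ($p(\partial)m=0$ forces $L_\lambda m=0$, hence $\mathrm{Tor}(M)\subseteq M^0=0$) is also correct. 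The one genuinely hard point --- that a finite irreducible nontrivial module must be free of rank one, i.e.\ ruling out rank $\geq 2$ --- you do not prove but explicitly delegate to the annihilation-algebra machinery of \cite{CK}, which is exactly the source the paper itself cites for the entire statement; so your write-up should be read as a self-contained proof of the first two assertions plus a reduction of the exhaustion claim to the rank-one theorem of \cite{CK}, not as an independent proof of the full classification.
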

Therefore, $M_{\alpha,\beta}$ with $\alpha\neq 0$, together with the one-dimensional modules $\mathbb{C}c_{\eta}({\eta} \in \mathbb{C})$, form a complete list of finite irreducible conformal modules over the Virasoro conformal algebra.

In \cite{CKW1}, extensions over the Virasoro conformal modules of the following types have been classified:
\begin{align}
&0\longrightarrow \mathbb{C}c_{\eta} \longrightarrow  E\longrightarrow  M_{\alpha,\beta} \longrightarrow   0\label{V1}\\
&0\longrightarrow M_{\alpha,\beta}\longrightarrow  E\longrightarrow  \mathbb{C}c_{\eta} \longrightarrow   0\label{V2}\\
&0\longrightarrow M_{\bar{\alpha},\bar{\beta}}\longrightarrow  E\longrightarrow  M_{\alpha,\beta}\longrightarrow   0\label{V3}.
\end{align}

We list the corresponding results in the following three theorems for later use.
\begin{theorem}\label{t25}(Ref. \cite{CKW1}, Proposition 2.1)
Nontrivial extensions of Virasoro conformal modules of the form (\ref{V1}) exist if and only if $\beta+\eta=0$ and $\alpha=1$ or $2$. In these cases, they are given (up to equivalence) by
\begin{eqnarray*}
L_\lambda v_{\alpha}=(\partial+\alpha\lambda+\beta)v_{\alpha}+f(\lambda)c_{\eta},
\end{eqnarray*}
where
\begin{enumerate}[(i)]
\item $f(\lambda)=c_2\lambda^2$, for $\alpha=1$ and $c_2\neq 0$.
\item $f(\lambda)=c_3\lambda^3$, for $\alpha=2$ and $c_3\neq 0$.
\end{enumerate}
Furthermore, all trivial cocycles are given by scalar multiples of the polynomial $f(\lambda)=\alpha \lambda + \beta+\eta$.
\end{theorem}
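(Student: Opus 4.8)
The plan is to realize the extension concretely as $E=M_{\alpha,\beta}\oplus\mathbb{C}c_\eta$ (a direct sum of vector spaces, with $M_{\alpha,\beta}$ a submodule) and to reduce the problem to a single functional equation. I would lift the generator $v$ of $M_{\alpha,\beta}=\mathbb{C}[\partial]v$ to an element $v_\alpha\in E$; since $E$ is a $\mathbb{C}[\partial]$-module and the action obeys conformal sesquilinearity, the $\lambda$-action is completely determined by
\begin{equation*}
L_\lambda v_\alpha=(\partial+\alpha\lambda+\beta)v_\alpha+f(\lambda)c_\eta ,
\end{equation*}
for some $f(\lambda)\in\mathbb{C}[\lambda]$ (any $\partial$-dependence of the cocycle is absorbed into $\eta$ because $\partial c_\eta=\eta c_\eta$). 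I would then impose the module axiom $L_\lambda(L_\mu v_\alpha)-L_\mu(L_\lambda v_\alpha)=[L_\lambda L]_{\lambda+\mu}v_\alpha$ using $[L_\lambda L]=(\partial+2\lambda)L$: the $M_{\alpha,\beta}$-components cancel (this is just the module relation defining $M_{\alpha,\beta}$), and the $\mathbb{C}c_\eta$-components give
\begin{equation}\label{Vstar}
(\lambda+\alpha\mu+\beta+\eta)f(\lambda)-(\mu+\alpha\lambda+\beta+\eta)f(\mu)=(\lambda-\mu)f(\lambda+\mu).
\end{equation}
Replacing the lift $v_\alpha$ by $v_\alpha+\gamma c_\eta$ ($\gamma\in\mathbb{C}$) turns $f(\lambda)$ into $f(\lambda)-\gamma(\alpha\lambda+\beta+\eta)$, so the trivial cocycles are precisely the scalar multiples of $\alpha\lambda+\beta+\eta$ (this is the last sentence of the theorem), and it remains to describe the solutions of \eqref{Vstar} up to this ambiguity.

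Next I would split into cases according to $\beta+\eta$. Putting $\mu=0$ in \eqref{Vstar} gives $(\beta+\eta)f(\lambda)=(\alpha\lambda+\beta+\eta)f(0)$. If $\beta+\eta\neq0$, then $f(\lambda)=\tfrac{f(0)}{\beta+\eta}(\alpha\lambda+\beta+\eta)$ is a coboundary, so there is no nontrivial extension. Hence I may assume $\beta+\eta=0$; since $M_{\alpha,\beta}$ is irreducible we have $\alpha\neq0$, and the relation just obtained becomes $\alpha\lambda f(0)=0$, forcing $f(0)=0$.

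With $\beta+\eta=0$ the equation \eqref{Vstar} reads $(\lambda+\alpha\mu)f(\lambda)-(\alpha\lambda+\mu)f(\mu)=(\lambda-\mu)f(\lambda+\mu)$. Expanding both sides to first order in $\mu$ at $\mu=0$ (equivalently, applying $\partial_\mu$ and setting $\mu=0$, using $f(0)=0$) produces the differential equation
\begin{equation*}
\lambda f'(\lambda)=(\alpha+1)f(\lambda)-\alpha f'(0)\,\lambda ,
\end{equation*}
whose polynomial solutions are exactly $f(\lambda)=f'(0)\lambda+c\,\lambda^{\alpha+1}$ with $c\in\mathbb{C}$, the second term being a (nonconstant) polynomial only when $\alpha$ is a positive integer. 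Since \eqref{Vstar} is linear in $f$ and $f(\lambda)=\lambda$ always solves it (being a coboundary, $\tfrac{1}{\alpha}(\alpha\lambda+\beta+\eta)$), what matters is whether $\lambda^{\alpha+1}$ solves it. Substituting $f(\lambda)=\lambda^{\alpha+1}$ into \eqref{Vstar} and comparing coefficients of $\lambda^i\mu^{\alpha+2-i}$ collapses to the requirement $\binom{\alpha+1}{i-1}=\binom{\alpha+1}{i}$ for all $i$ with $2\le i\le\alpha$, which holds for every such $i$ precisely when $\alpha\le2$. Thus nontrivial extensions occur exactly when $\beta+\eta=0$ and $\alpha\in\{1,2\}$, represented by $f(\lambda)=c_2\lambda^2$ with $c_2\neq0$ for $\alpha=1$ and by $f(\lambda)=c_3\lambda^3$ with $c_3\neq0$ for $\alpha=2$.

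The two delicate points are: (i) deriving \eqref{Vstar} correctly, which depends on treating $\partial$ as the scalar $\eta$ on $c_\eta$ throughout the computation; and (ii) the step that actually caps $\alpha$ at $2$, namely verifying that for every integer $\alpha\ge3$ the only candidate forced by the ODE, $\lambda^{\alpha+1}$, fails \eqref{Vstar}. Point (ii) is the main obstacle, since it eliminates the a priori conceivable extensions at $\alpha=3,4,\dots$; in the end it reduces to the elementary observation that $\binom{\alpha+1}{1}\neq\binom{\alpha+1}{2}$ once $\alpha\ge3$. If one prefers to bypass the ODE, the same conclusions follow by plugging $f(\lambda)=\sum_k a_k\lambda^k$ directly into \eqref{Vstar} and comparing coefficients, but the ODE shortcut keeps the bookkeeping minimal.
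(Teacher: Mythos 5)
Your argument is essentially correct, and one should note at the outset that the paper itself gives no proof of this statement -- it is quoted from Cheng--Kac--Wakimoto -- so there is no in-paper argument to compare against; what you have written is a self-contained proof of the cited result. One wording slip: in an extension of the form (\ref{V1}) the submodule is $\mathbb{C}c_{\eta}$ and $M_{\alpha,\beta}$ is the quotient, not the other way around as your opening sentence says; your actual computation uses the correct convention (the correction term $f(\lambda)c_{\eta}$ lies in $\mathbb{C}c_{\eta}$, on which $L_{\lambda}$ acts by zero and $\partial$ by the scalar $\eta$), so nothing downstream is affected. Your route matches the standard strategy: derive the cocycle equation $(\lambda+\alpha\mu+\beta+\eta)f(\lambda)-(\mu+\alpha\lambda+\beta+\eta)f(\mu)=(\lambda-\mu)f(\lambda+\mu)$, identify coboundaries as the multiples of $\alpha\lambda+\beta+\eta$, kill the case $\beta+\eta\neq0$ by setting $\mu=0$, and then analyze polynomial solutions when $\beta+\eta=0$. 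Where you differ from the usual direct coefficient comparison is the mechanism that caps $\alpha$: applying $\partial_{\mu}$ at $\mu=0$ (legitimate, using $f(0)=0$, which you correctly deduce from $\alpha\neq0$) to get $\lambda f'(\lambda)=(\alpha+1)f(\lambda)-\alpha f'(0)\lambda$, so that modulo the coboundary $\lambda$ the only candidate is $\lambda^{\alpha+1}$, and then checking $\binom{\alpha+1}{i-1}=\binom{\alpha+1}{i}$ for $2\le i\le\alpha$, which fails at $i=2$ once $\alpha\ge3$; I verified these computations and they are right, and your representatives $c_{2}\lambda^{2}$, $c_{3}\lambda^{3}$ are indeed not coboundaries since coboundaries here are multiples of $\lambda$. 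The ODE shortcut buys shorter bookkeeping; the price is the extra logical step (which you do take) of re-substituting $\lambda^{\alpha+1}$ into the full functional equation, since the ODE is only a necessary condition.
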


\begin{theorem}\label{t2}(Ref.  \cite{CKW1}, Proposition 2.2)
Nontrivial extensions of Virasoro conformal modules of the form (\ref{V2}) exist if and only if $\beta+\eta=0$ and $\alpha=1$. In these cases, they are given (up to equivalence) by
\begin{eqnarray*}
L_\lambda c_{\eta}=f(\partial,\lambda)v_{\alpha}, \quad \partial c_{\eta}=\eta c_{\eta}+p(\partial)v_{\alpha},
\end{eqnarray*}
where $f(\partial,\lambda)=p(\partial)=k$ for some nonzero  $k\in\mathbb{C}$.

Furthermore, all trivial cocycles are given by the same scalar multiples of the polynomial $f(\partial,\lambda)=(\partial+\alpha \lambda + \beta)\phi(\partial+\lambda)$ and $p(\partial)=(\partial-\eta)\phi(\partial)$, where $\phi$ is a polynomial.
\end{theorem}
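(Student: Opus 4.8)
The plan is to realize an arbitrary extension of the form (\ref{V2}) concretely and reduce the module axioms to a pair of polynomial functional equations which can then be solved outright. Write $E=\mathbb{C}[\partial]v_\alpha\oplus\mathbb{C}c_\eta$ as a vector space with $M_{\alpha,\beta}=\mathbb{C}[\partial]v_\alpha$ the submodule. Since $E/M_{\alpha,\beta}\cong\mathbb{C}c_\eta$ is a trivial $Vir$-module on which $\partial$ acts as $\eta$, a lift of the generator $c_\eta$ must satisfy
\begin{align*}
\partial c_\eta=\eta c_\eta+p(\partial)v_\alpha,\qquad L_\lambda c_\eta=f(\partial,\lambda)v_\alpha
\end{align*}
for some $p(\partial)\in\mathbb{C}[\partial]$ and $f(\partial,\lambda)\in\mathbb{C}[\partial,\lambda]$; conversely any such pair $(p,f)$ defines a $\mathbb{C}[\partial]$-module structure on $E$ and a candidate $\lambda$-action of $Vir$ on $E$ (using $Vir=\mathbb{C}[\partial]L$ and $(\partial L)_\lambda=-\lambda L_\lambda$). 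First I would check that, among the axioms of Definition \ref{Def22}, only two are non-automatic for this data: sesquilinearity $L_\lambda(\partial c_\eta)=(\partial+\lambda)L_\lambda c_\eta$ gives
\begin{align*}
p(\partial+\lambda)(\partial+\alpha\lambda+\beta)=(\partial+\lambda-\eta)f(\partial,\lambda),
\end{align*}
and the commutator identity $L_\lambda(L_\mu c_\eta)-L_\mu(L_\lambda c_\eta)=[L_\lambda L]_{\lambda+\mu}c_\eta$ with $[L_\lambda L]=(\partial+2\lambda)L$ gives
\begin{align*}
f(\partial+\lambda,\mu)(\partial+\alpha\lambda+\beta)-f(\partial+\mu,\lambda)(\partial+\alpha\mu+\beta)=(\lambda-\mu)f(\partial,\lambda+\mu);
\end{align*}
call these $(\star)$ and $(\star\star)$.

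The next step is to solve $(\star\star)$. Putting $\mu=0$ and writing $h(\partial):=f(\partial,0)$ yields $h(\partial+\lambda)(\partial+\alpha\lambda+\beta)=f(\partial,\lambda)(\partial+\lambda+\beta)$, i.e. $f(\partial,\lambda)=h(\partial+\lambda)(\partial+\alpha\lambda+\beta)/(\partial+\lambda+\beta)$. Since $f$ must be a polynomial and $\partial+\lambda+\beta$ is irreducible in $\mathbb{C}[\partial,\lambda]$, I would split into two cases. If $\alpha\neq1$ then $\partial+\lambda+\beta$ does not divide $\partial+\alpha\lambda+\beta$, so it must divide $h(\partial+\lambda)$, forcing $h(-\beta)=0$, hence $h(x)=(x+\beta)\phi(x)$ and $f(\partial,\lambda)=\phi(\partial+\lambda)(\partial+\alpha\lambda+\beta)$. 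If $\alpha=1$ one may cancel $\partial+\lambda+\beta$ outright to get $f(\partial,\lambda)=h(\partial+\lambda)$ for an arbitrary polynomial $h$. A short direct substitution then verifies that each of these candidates satisfies the full equation $(\star\star)$, so nothing is lost by specializing $\mu=0$.

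Finally I would feed this into $(\star)$ and separate cases on the vanishing of $\beta+\eta$. For $\alpha\neq1$, $(\star)$ reduces after cancellation to $p(\partial)=(\partial-\eta)\phi(\partial)$; since changing the lift $c_\eta\mapsto c_\eta+\phi(\partial)v_\alpha$ replaces $(f,p)$ by $(f+\phi(\partial+\lambda)(\partial+\alpha\lambda+\beta),\,p+(\partial-\eta)\phi(\partial))$, every such extension is trivial. For $\alpha=1$, $(\star)$ reads $p(x)(x+\beta)=(x-\eta)h(x)$ with $x=\partial+\lambda$; if $\beta+\eta\neq0$ the coprimality of $x+\beta$ and $x-\eta$ forces $h=(x+\beta)\phi$, $p=(x-\eta)\phi$, again a coboundary, whereas if $\beta+\eta=0$ it reduces to $p=h$ with $h$ unrestricted, and since the coboundaries are exactly the scalar multiples of $\partial+\lambda+\beta$, the class of $h$ modulo coboundaries is represented by a constant $k$, giving precisely the family $L_\lambda c_\eta=kv_\alpha$, $\partial c_\eta=\eta c_\eta+kv_\alpha$ (nontrivial iff $k\neq0$) together with the stated description of trivial cocycles. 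I expect the one genuinely delicate point to be the divisibility and degree bookkeeping around $(\star\star)$ — in particular, confirming that the $\mu=0$ specialization captures all solutions rather than a proper subfamily; the remaining arguments are bounded-degree linear algebra over $\mathbb{C}[\partial]$.
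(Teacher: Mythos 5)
Your proposal is correct and takes essentially the same route as the cited source and as this paper's parallel computation for $\mathcal{W}(a,b)$ (the same two functional equations appear there as (\ref{f319})--(\ref{f320}), and the coboundary description as Lemma \ref{l34}): write the extension via $f(\partial,\lambda)$ and $p(\partial)$, derive the cocycle equations from sesquilinearity and the $\lambda$-bracket, specialize $\mu=0$, and finish by divisibility and coprimality of the linear factors. One small wording quibble: in the case $\alpha=1$, $\beta+\eta=0$ the coboundaries are all polynomial multiples $(\partial+\lambda+\beta)\phi(\partial+\lambda)$, not merely scalar multiples of $\partial+\lambda+\beta$ — it is the former (which you had in fact established via the change of lift $c_\eta\mapsto c_\eta+\phi(\partial)v_\alpha$) that lets you reduce $h$ modulo $(\partial+\lambda+\beta)$ to a constant $k$.
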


\begin{theorem}\label{t3}(Ref. \cite{CKW1}, Theorem 3.1)
Nontrivial extensions of Virasoro conformal modules of the form (\ref{V3}) exist if and only if $\beta=\bar{\beta}$ and $\alpha-\bar{\alpha}=0,1,2,3,4,5,6$. In these cases, they are given (up to equivalence) by
\begin{eqnarray*}
L_\lambda v_{\alpha}=(\partial+\alpha\lambda+\beta)v_{\alpha}+f(\partial,\lambda)v_{\bar{\alpha}}.
\end{eqnarray*}
The complete list of values of $\alpha$ and $\bar{\alpha}$ along with the corresponding polynomials $f(\partial,\lambda)$, is given as follows, whose nonzero scalar multiples give rise to nontrivial extensions (by replacing $\partial$ by $\partial+\beta$):
\begin{enumerate}[(i)]
\item $\alpha=\bar{\alpha}$ with $\alpha \in \mathbb{C}$. $f(\partial,\lambda)=a_0+a_1\lambda$, where $(a_0,a_1)\neq (0,0)$.
\item $\alpha=1$ and $\bar{\alpha}=0$. $f(\partial,\lambda)=a_0\partial+b_0\partial\lambda+b_1\lambda^2$, where $(a_0,b_0,b_1)\neq (0,0,0)$.
\item $\alpha-\bar{\alpha}=2$ with $\alpha \in \mathbb{C}$. $f(\partial,\lambda)=\lambda^2(2\partial+\lambda)$.
\item $\alpha-\bar{\alpha}=3$ with $\alpha \in \mathbb{C}$. $f(\partial,\lambda)=\partial\lambda^2(\partial+\lambda)$.
\item $\alpha-\bar{\alpha}=4$ with $\alpha \in \mathbb{C}$. $f(\partial,\lambda)=\lambda^2(4\partial^3+6\partial^2\lambda-\partial\lambda^2+\bar{\alpha}\lambda^3)$.

\item $\alpha=5$ and $\bar{\alpha}=0$. $f(\partial,\lambda)=5\partial^4\lambda^2+10\partial^2\lambda^4-\partial\lambda^5$.
\item $\alpha=1$ and $\bar{\alpha}=-4$. $f(\partial,\lambda)=\partial^4\lambda^2-10\partial^2\lambda^4-17\partial\lambda^5-8\lambda^6$.

\item $\alpha=\frac{7}{2}\pm\frac{\sqrt{19}}{2}$ and $\bar{\alpha}=-\frac{5}{2}\pm\frac{\sqrt{19}}{2}$. $f(\partial,\lambda)=\partial^4\lambda^3-(2\bar{\alpha}+3)\partial^3\lambda^4-3\bar{\alpha}\partial^2\lambda^5-(3\bar{\alpha}+1)\partial\lambda^6-(\bar{\alpha}+\frac{9}{28})\lambda^7$.

\end{enumerate}
Furthermore, all trivial cocycles are given by scalar multiples of the polynomial $f(\partial,\lambda)=(\partial+\alpha \lambda + \beta)\phi(\partial)-(\partial+\bar{\alpha} \lambda + \bar{\beta})\phi(\partial+\lambda)$, where $\phi$ is a polynomial.
\end{theorem}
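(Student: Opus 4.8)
The plan is to convert the classification of the extensions \eqref{V3} into a polynomial functional equation in two variables and then solve that equation by a degree analysis. First I would realize a given extension on $E=M_{\bar\alpha,\bar\beta}\oplus\mathbb{C}[\partial]v_\alpha$, where $v_{\bar\alpha}$ is the distinguished generator of $M_{\bar\alpha,\bar\beta}=\mathbb{C}[\partial]v_{\bar\alpha}$ and $v_\alpha$ is any lift of the generator of $M_{\alpha,\beta}$. Conformal sesquilinearity forces
\[
L_\lambda v_{\bar\alpha}=(\partial+\bar\alpha\lambda+\bar\beta)v_{\bar\alpha},\qquad L_\lambda v_\alpha=(\partial+\alpha\lambda+\beta)v_\alpha+f(\partial,\lambda)v_{\bar\alpha}
\]
for some $f\in\mathbb{C}[\partial,\lambda]$, and imposing the module axiom $L_\lambda L_\mu v_\alpha-L_\mu L_\lambda v_\alpha=[L_\lambda L]_{\lambda+\mu}v_\alpha$ with $[L_\lambda L]=(\partial+2\lambda)L$ and reading off the $v_{\bar\alpha}$--component yields
\begin{align}
&(\partial+\lambda+\alpha\mu+\beta)f(\partial,\lambda)-(\partial+\mu+\alpha\lambda+\beta)f(\partial,\mu)\notag\\
&\qquad{}+(\partial+\bar\alpha\lambda+\bar\beta)f(\partial+\lambda,\mu)-(\partial+\bar\alpha\mu+\bar\beta)f(\partial+\mu,\lambda)=(\lambda-\mu)f(\partial,\lambda+\mu).\label{cocyc}
\end{align}
I would also note that changing the lift by $v_\alpha\mapsto v_\alpha+\phi(\partial)v_{\bar\alpha}$ --- the only remaining freedom --- replaces $f$ by $f-\bigl((\partial+\alpha\lambda+\beta)\phi(\partial)-(\partial+\bar\alpha\lambda+\bar\beta)\phi(\partial+\lambda)\bigr)$, so the extension is trivial precisely when $f$ has the coboundary form given in the statement; hence the classification amounts to describing the solutions of \eqref{cocyc} modulo coboundaries.

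The second step is to normalize $\beta$. Setting $\mu=0$ in \eqref{cocyc} and writing $g(\partial)=f(\partial,0)$ collapses it to $(\beta-\bar\beta)f(\partial,\lambda)=(\partial+\alpha\lambda+\beta)g(\partial)-(\partial+\bar\alpha\lambda+\bar\beta)g(\partial+\lambda)$; so if $\beta\neq\bar\beta$ then $f$ is the coboundary of $g/(\beta-\bar\beta)$ and the extension splits, whence every nontrivial extension has $\beta=\bar\beta$. When $\beta=\bar\beta$, the substitution $\partial\mapsto\partial+\beta$ carries \eqref{cocyc} and its coboundaries for the parameters $(\alpha,\bar\alpha,\beta,\beta)$ to the same objects for $(\alpha,\bar\alpha,0,0)$, so I may assume $\beta=\bar\beta=0$. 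In that case \eqref{cocyc} is homogeneous for the grading $\deg\partial=\deg\lambda=\deg\mu=1$, so the solution space and the coboundary space are both graded by the total $(\partial,\lambda)$--degree $d$ of $f$, and it suffices to determine, for each $d\geq 0$, the homogeneous solutions $f=\sum_{j=0}^{d}c_j\,\partial^{\,d-j}\lambda^j$ up to coboundaries.

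The third step is the degree-by-degree solution. For fixed $d$, \eqref{cocyc} becomes a finite homogeneous linear system in $(c_0,\dots,c_d)$; I would expand $f(\partial+\lambda,\mu)$ and $f(\partial+\mu,\lambda)$ by Taylor's formula and compare coefficients of the powers of $\mu$ to get relations among the $c_j$ whose consistency imposes conditions on $\alpha$, $\bar\alpha$ and $d$. Carrying this out should reproduce, after subtracting coboundaries: items (i)--(ii) for $d\leq 2$ (here $\alpha-\bar\alpha\in\{0,1\}$, with $d=1$, $c_0\neq0$ already forcing $(\alpha,\bar\alpha)=(1,0)$); the uniform families (iii)--(v) for $d=3,4,5$, where $f(\partial,0)=0$, $\alpha-\bar\alpha=d-1$ and $\alpha$ is otherwise free; and the exceptional solutions (vi)--(viii) for $d=6,7$, where the relations are compatible only when $\alpha$ satisfies a further polynomial equation --- this is exactly where the pairs $(\alpha,\bar\alpha)\in\{(5,0),(1,-4)\}$ (for $d=6$) and the values $\alpha=\tfrac72\pm\tfrac{\sqrt{19}}{2}$ (for $d=7$) appear. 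To finish I would prove that for $d\geq8$ the only solution is $0$, which with the above yields that no nontrivial extension exists once $\alpha-\bar\alpha\geq7$; this vanishing is the finiteness heart of the theorem but follows from the same relations by inspecting their behaviour at the ends $j=0$ and $j=d$.

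The step I expect to be the main obstacle is the exceptional range $d=6,7$. For generic $\alpha$ the linear system from \eqref{cocyc} has full rank and there is no extension; the delicate work is to locate precisely the proper subvariety of $\alpha$--values on which the rank drops, to check that it is cut out exactly by the quadratic producing the two $\sqrt{19}$--roots (and, for $d=6$, by the conditions singling out $(5,0)$ and $(1,-4)$), and to verify that the resulting cocycles are genuine non-coboundaries by comparing them with the explicit degree-$d$ coboundary polynomials $(\partial+\alpha\lambda)\phi(\partial)-(\partial+\bar\alpha\lambda)\phi(\partial+\lambda)$. By contrast, the low-degree cases, the uniform families (iii)--(v), and the vanishing for $d\geq8$ should reduce to routine (if lengthy) manipulations of the same relations.
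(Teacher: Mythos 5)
The paper does not prove this theorem at all: it is imported verbatim from Cheng--Kac--Wakimoto \cite{CKW1} (Theorem 3.1), so there is no internal argument to compare against. Your outline is the same route as the cited proof, and the parts you make explicit are correct: the cocycle functional equation you write is exactly the one obtained by applying $[L_\lambda,L_\mu]=(\lambda-\mu)L_{\lambda+\mu}$ to $v_\alpha$ (it coincides in form with the paper's equation (\ref{f332})), the coboundary description matches the last assertion of the theorem, the $\mu=0$ specialization correctly kills the case $\beta\neq\bar\beta$, and the shift $\partial\mapsto\partial+\beta$ plus homogeneity in $(\partial,\lambda,\mu)$ legitimately reduces the problem to a degree-by-degree linear system, with the degrees matched correctly to the items (i)--(viii) ($d\le 2$, $d=3,4,5$ uniform, $d=6,7$ exceptional, nothing new for $d\ge 8$).

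Two caveats. First, this is a strategy rather than a proof: everything that makes the theorem nontrivial --- solving the linear systems at $d=6,7$ to isolate $(\alpha,\bar\alpha)=(5,0),(1,-4)$ and $\alpha=\tfrac72\pm\tfrac{\sqrt{19}}2$, and showing that for $d\ge 8$ (equivalently $\alpha-\bar\alpha\ge 7$) nothing survives --- is deferred to ``routine (if lengthy) manipulations'' that you do not carry out; this is precisely the bulk of the computation in \cite{CKW1}. Second, a small imprecision: for $d\ge 8$ the homogeneous cocycle space is not zero, since the coboundaries $(\partial+\alpha\lambda)\partial^{\,d-1}-(\partial+\bar\alpha\lambda)(\partial+\lambda)^{d-1}$ are nonzero solutions of your functional equation; the correct claim is that every solution is a coboundary, i.e. the quotient $Ext$ vanishes, which is what your earlier ``up to coboundaries'' framing intends but your final sentence misstates.
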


\begin{remark}
We keep the part of $\bar{\alpha}=0$ in Theorem \ref{t3} for later use.
\end{remark}
\section{Extensions of finite irreducible $\mathcal{W}(a,b)$-modules}

In this section, we introduce the definition of Lie conformal algebra $\mathcal{W}(a,b)$ and give a complete classification of extensions of finite irreducible $\mathcal{W}(a,b)$-modules.

\begin{definition}
The Lie conformal algebra $\mathcal{W}(a,b)$ with two parameters $a$, $b\in \mathbb{C}$ is a free $\mathbb{C}[\partial]$-module generated by $L$ and $W$ satisfying
\begin{eqnarray*}
[L_\lambda L]=(\partial+2\lambda)L,~~~~[L_\lambda W]=(\partial+a\lambda+b)W,~~~~[W_\lambda W]=0.
\end{eqnarray*}
\end{definition}
All finite nontrivial conformal modules over the Lie conformal algebra $\mathcal{W}(a,b)$ were classified in \cite{Luo-Hong-Wu}. We recall them  via  the following theorem.

\begin{theorem}\label{t31}(Ref. \cite{Luo-Hong-Wu}, Theorem 3.10)
Any finite nontrivial irreducible $\mathcal{W}(a,b)$-module $M$ is free of rank one over $\mathbb{C}[\partial]$. Moreover,
\begin{enumerate}
\item If $(a,b)\neq (1,0)$,
  \begin{equation*}
  \xymatrix{M\cong M_{\alpha,\beta}=\mathbb{C}[\partial]v,\  L_\lambda v=(\partial+\alpha\lambda+\beta)v,\  W_\lambda v=0,}
   \end{equation*}
with $\alpha,\beta \in \mathbb{C}$  and  $\alpha \neq 0$.

\item If $(a,b)=(1,0)$,
 \begin{equation*}
  \xymatrix{M\cong M_{\alpha,\beta,\gamma}=\mathbb{C}[\partial]v,\  L_\lambda v=(\partial+\alpha\lambda+\beta)v,\  W_\lambda v=\gamma v,}
 \end{equation*}
with $\alpha,\beta,\gamma \in \mathbb{C}$ and $(\alpha,\gamma) \neq (0,0)$.
\end{enumerate}
\end{theorem}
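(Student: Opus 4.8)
The plan is to reduce everything to the Virasoro picture of Proposition~\ref{pr1} by analysing how the abelian ideal $\mathbb{C}[\partial]W$ of $\mathcal{W}(a,b)$ can act. First I would show that $M$ is free of finite rank over $\mathbb{C}[\partial]$: its $\mathbb{C}[\partial]$-torsion is a conformal submodule (from $a_\lambda(p(\partial)v)=p(\partial+\lambda)a_\lambda v$ one sees that $p(\partial)v=0$ forces every coefficient of $a_\lambda v$ to be torsion), so by irreducibility it is $0$ or $M$; in the latter case $M$ is finite dimensional over $\mathbb{C}$ and the standard argument --- the fields act by polynomials in the formal variable, and conformal sesquilinearity forces a finite dimensional irreducible conformal module to be trivial (see \cite{CK}) --- contradicts nontriviality. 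Hence $M$ is torsion free, and being finitely generated over the principal ideal domain $\mathbb{C}[\partial]$ it is free of finite rank.

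Next I would use the submodule $M_{0}=\{v\in M:\ W_\lambda v=0\ \text{for all}\ \lambda\}$. Using $[W_\lambda W]=0$ and the module identity $L_\lambda(W_\mu v)-W_\mu(L_\lambda v)=((a-1)\lambda-\mu+b)\,W_{\lambda+\mu}v$ (the axiom for $[L_\lambda W]=(\partial+a\lambda+b)W$), one checks at once that $M_{0}$ is a $\mathcal{W}(a,b)$-submodule, so $M_{0}=M$ or $M_{0}=0$. If $M_{0}=M$, then $W$ acts trivially, $M$ is an irreducible nontrivial conformal $Vir$-module, and Proposition~\ref{pr1} gives $M\cong M_{\alpha,\beta}$ with $\alpha\neq 0$ --- this is conclusion (1), and when $(a,b)=(1,0)$ it is conclusion (2) with $\gamma=0$.

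It remains to treat the case $M_{0}=0$. Here $M$ has no nonzero $v$ with $L_\lambda v=0$: otherwise the identity above reads $L_\lambda(W_\mu v)=((a-1)\lambda-\mu+b)W_{\lambda+\mu}v$, and expanding $W_\mu v=\sum_{n\le N}\mu^{n}w_{n}$ with $w_{N}\neq0$ (nonzero because $v\notin M_{0}$), the right side has $\mu$-degree $N+1$ while the left has $\mu$-degree at most $N$. Since a finite conformal module over a finite Lie conformal algebra has finite length, $M$ therefore has a minimal conformal $Vir$-submodule $N=\mathbb{C}[\partial]u$ with $L_\lambda u=(\partial+\alpha\lambda+\beta)u$, $\alpha\neq 0$. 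Writing $W_\mu u=\sum_{n\le N'}\mu^{n}w_{n}$ with $w_{N'}\neq 0$ and comparing the coefficients of $\mu^{N'}$ in $L_\lambda(W_\mu u)=(\partial+\mu+\alpha\lambda+\beta)W_\mu u+((a-1)\lambda-\mu+b)W_{\lambda+\mu}u$ gives $L_\lambda w_{N'}=(\partial+(\alpha+a-1-N')\lambda+(\beta+b))w_{N'}$, so $w_{N'}$ spans a further Virasoro eigenline with parameters shifted by $(a-1-N',\,b)$. Iterating produces infinitely many Virasoro eigenlines whose $\beta$-parameters run through $\beta,\beta+b,\beta+2b,\dots$ and whose nonzero $\alpha$-parameters must occur among the finitely many $\alpha$-values of the $Vir$-composition factors of the finite module $M$; bringing in, moreover, the lower coefficients $w_{n}$ and matching the resulting short exact sequences of Virasoro modules against the admissible list of Theorem~\ref{t3}, one forces in turn $b=0$, then $a=1$, then $N'=0$. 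Once $(a,b)=(1,0)$ and $W_\mu u$ is independent of $\mu$, the finish is clean: $W_{(0)}$ then commutes with $\partial$ and with every $L_\lambda$, its kernel and image are $\mathcal{W}(1,0)$-submodules, so by irreducibility $W_{(0)}$ is a bijection of $M$; thus $M$ is cyclic (generated by $u$) over $\mathbb{C}[\partial][t,t^{-1}]$ with $t$ acting as $W_{(0)}$, and the well-definedness of the whole $W$-action forces the defining relation to be a polynomial $g(t)\in\mathbb{C}[t]$; for any root $\gamma$ of $g$, the subspace $(t-\gamma)M$ is a proper nonzero $\mathcal{W}(1,0)$-submodule when $\deg g\ge 2$, so $\deg g=1$, i.e.\ $M$ has rank one with $W_\lambda u=\gamma u$ and $\gamma\neq 0$ (else $u\in M_{0}$). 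This is conclusion (2).

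The main obstacle is precisely this case $M_{0}=0$: the degree comparisons are mechanical, and the rank-one reduction at the end is short, but the intermediate bookkeeping --- excluding every $(a,b)\neq(1,0)$ and pushing $N'$ down to $0$, i.e.\ forcing the extra generator $W$ to act by a scalar --- is the delicate part, and it is there that the classification of Virasoro extensions recalled in Theorem~\ref{t3} genuinely enters.
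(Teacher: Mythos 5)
This theorem is not proved in the present paper at all --- it is quoted from \cite{Luo-Hong-Wu} (Theorem 3.10) --- so your attempt has to stand on its own, and as it stands it has a genuine gap exactly where the theorem's content lies. Your reductions are fine: torsion-freeness, the fact that $M_0=\{v: W_\lambda v=0\}$ is a submodule, the disposal of the case $M_0=M$ via Proposition \ref{pr1}, the computation $L_\lambda w_{N'}=(\partial+(\alpha+a-1-N')\lambda+\beta+b)w_{N'}$, and the endgame once one knows $(a,b)=(1,0)$ and $W_\mu u$ is $\mu$-independent (the $W_{(0)}$-argument can indeed be made rigorous). But the case $M_0=0$ is treated by assertion. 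First, the existence of $u$ with $L_\lambda u=(\partial+\alpha\lambda+\beta)u$ is justified by ``a finite conformal module over a finite Lie conformal algebra has finite length'', which is false as stated (a free rank-one module with trivial action has the infinite descending chain $(\partial)\supset(\partial^2)\supset\cdots$ and no minimal nonzero submodule); in the literature this existence statement is obtained from the Cheng--Kac annihilation-algebra/singular-vector machinery, not from a length argument, so at the very least it must be cited correctly. Second, and more seriously, the sentence ``one forces in turn $b=0$, then $a=1$, then $N'=0$'' is the whole theorem, and your bookkeeping does not deliver it. What your iteration actually gives is $\beta_{k+1}=\beta_k+b$ and $\alpha_{k+1}=\alpha_k+(a-1)-N'_k$ with $N'_k\ge 0$; finiteness of admissible $\beta$-values (which itself needs an argument, e.g.\ Cayley--Hamilton applied to the $\mathbb{C}[\partial]$-linear operator $L_{(0)}-\partial$, not an appeal to composition factors) does force $b=0$, but the $\alpha$-recursion only forces $a-1$ to equal the average of the $N'_k$ over a cycle, i.e.\ $a-1\ge 0$; it does not exclude, say, $a=2$ with all $N'_k=1$ or $a=3$ with all $N'_k=2$, nor does it push $N'$ down to $0$.

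Excluding those possibilities --- i.e.\ proving that for $(a,b)\neq(1,0)$ the $W$-action on a finite irreducible module must vanish, and that for $(1,0)$ it must be a scalar --- requires analysing the lower coefficients $w_n$ of $W_\mu u$ together with the full functional equations coming from $[L_\lambda W]=(\partial+a\lambda+b)W$ and, crucially, $[W_\lambda W]=0$, a relation your middle section never uses. ``Matching the resulting short exact sequences against Theorem \ref{t3}'' is a plan rather than an argument: Theorem \ref{t3} classifies extensions of Virasoro modules and carries no information about the parameter $a$, so by itself it cannot distinguish $\mathcal{W}(2,0)$ from $\mathcal{W}(1,0)$; the constraint on $a$ has to come from the $\mathcal{W}(a,b)$-relations themselves (this is in effect what the cocycle equations (\ref{f333}), (\ref{f311}) encode in Section 3, and what \cite{Luo-Hong-Wu} carries out in detail). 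So the skeleton is reasonable and several pieces are salvageable, but the delicate step you yourself identify as the main obstacle is precisely the step that is missing.
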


In this paper, we denote the $\mathcal{W}(a,b)$-module $M$ from Theorem \ref{t31} by $M_{\alpha,\beta}$ if $(a,b)\neq (1,0)$, and $M_{\alpha,\beta,\gamma}$ if $(a,b)=(1,0)$, respectively. Actually, $\mathcal{W}(1,0)$ is the Heisenberg-Virasoro conformal algebra. Moreover, extensions of finite irreducible modules over it were classified in \cite{Ling-Yuan1,Yuan-Ling}. So we will give their results directly below without proof.

By Definition \ref{Def22}, a $\mathcal{W}(a,b)$-module structure on $M$ is given by $L_{\lambda}, W_{\lambda} \in End_{\mathbb{C}}(M)[\lambda]$ such that
\begin{align}
&[L_{\lambda},L_{\mu}]=(\lambda-\mu)L_{\lambda+\mu}, \label{f31}\\
&[L_{\lambda},W_{\mu}]=((a-1)\lambda-\mu+b)W_{\lambda+\mu}, \label{f32}\\
&[W_{\lambda},L_{\mu}]=-((a-1)\mu-\lambda+b)W_{\lambda+\mu}, \label{f33}\\
&[W_{\lambda},W_{\mu}]=0, \label{f34}\\
&[\partial,L_{\lambda}]=-\lambda L_{\lambda}, \label{f35}\\
&[\partial,W_{\lambda}]=-\lambda W_{\lambda}. \label{f36}
\end{align}

First, we consider extensions of finite irreducible $\mathcal{W}(a,b)$-modules of the form
\begin{align}
0\longrightarrow \mathbb{C}c_{\eta} \longrightarrow  E\longrightarrow  M \longrightarrow   0\label{W1}
\end{align}
Since $M$ is free as a $\mathbb{C}[\partial]$-module, $E$ as a $\mathbb{C}[\partial]$-module in (\ref{W1}) is isomorphic to $\mathbb{C}c_{\eta}\oplus M$, where $\mathbb{C}c_{\eta}$ is a $\mathcal{W}(a,b)$-submodule, and $M=\mathbb{C}[\partial]v_{\alpha}$ such that the following identities hold in $E$:
\begin{enumerate}
\item If $(a,b)\neq (1,0)$,
  \begin{equation}
  L_\lambda v_{\alpha}=(\partial+\alpha\lambda+\beta)v_{\alpha}+f(\lambda)c_{\eta},\quad W_\lambda v_{\alpha}=g(\lambda)c_{\eta};\label{W11}
   \end{equation}

  \item If $(a,b)=(1,0)$,
 \begin{equation}
 L_\lambda v_{\alpha}=(\partial+\alpha\lambda+\beta)v_{\alpha}+f(\lambda)c_{\eta},\quad W_\lambda v_{\alpha}=\gamma v_{\alpha}+g(\lambda)c_{\eta};\label{W12}
  \end{equation}
\end{enumerate}
where $f(\lambda), g(\lambda) \in \mathbb{C}[\lambda]$.

\begin{lemma}\label{l32}
All trivial extensions of finite irreducible $\mathcal{W}(a,b)$-modules of the form (\ref{W1}) are given by (\ref{W11}) and (\ref{W12}), and
\begin{enumerate}
\item If $(a,b)\neq (1,0)$, $f(\lambda)$ is a scalar multiple of $\alpha\lambda+\beta+\eta$ and $g(\lambda)=0$.
\item If $(a,b)=(1,0)$, $f(\lambda)$ and $g(\lambda)$ are the same scalar multiple of $\alpha\lambda+\beta+\eta$ and $\gamma$, respectively.
\end{enumerate}
\end{lemma}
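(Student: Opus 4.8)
The plan is to identify the trivial extensions with coboundaries and compute those directly from the definition. Recall that an extension of the form (\ref{W1}), written as in (\ref{W11}) or (\ref{W12}), is trivial exactly when it is equivalent to the direct sum. By the notion of equivalence recalled above, this means there is a $\mathcal{W}(a,b)$-module homomorphism $\Psi$ from $E$ to $\mathbb{C}c_{\eta}\oplus M$ (with the direct-sum action) that restricts to the identity on $\mathbb{C}c_{\eta}$ and induces the identity on $M$. Since $E$ and $\mathbb{C}c_{\eta}\oplus M$ coincide as $\mathbb{C}[\partial]$-modules, such a $\Psi$ is forced to have block form $\Psi(c_{\eta})=c_{\eta}$, $\Psi(v_{\alpha})=v_{\alpha}+\phi(v_{\alpha})$ for some $\mathbb{C}[\partial]$-module homomorphism $\phi\colon M\to\mathbb{C}c_{\eta}$, and the requirement that $\Psi$ intertwine the two actions becomes the pair of coboundary identities $f(\lambda)c_{\eta}=L_{\lambda}\phi(v_{\alpha})-\phi(L_{\lambda}v_{\alpha})$ and $g(\lambda)c_{\eta}=W_{\lambda}\phi(v_{\alpha})-\phi(W_{\lambda}v_{\alpha})$.

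First I would observe that, since $M=\mathbb{C}[\partial]v_{\alpha}$ is free of rank one and $\partial$ acts on $\mathbb{C}c_{\eta}$ by the scalar $\eta$, every $\mathbb{C}[\partial]$-module homomorphism $\phi\colon M\to\mathbb{C}c_{\eta}$ has the form $\phi(p(\partial)v_{\alpha})=k\,p(\eta)\,c_{\eta}$ for a single scalar $k\in\mathbb{C}$. Substituting this into the two identities, and using $L_{\lambda}c_{\eta}=W_{\lambda}c_{\eta}=0$ together with $L_{\lambda}v_{\alpha}=(\partial+\alpha\lambda+\beta)v_{\alpha}$, one gets $f(\lambda)c_{\eta}=-\phi\bigl((\partial+\alpha\lambda+\beta)v_{\alpha}\bigr)=-k(\alpha\lambda+\beta+\eta)c_{\eta}$, so $f(\lambda)$ must equal the scalar multiple $-k(\alpha\lambda+\beta+\eta)$ of $\alpha\lambda+\beta+\eta$. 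For the $W$-part, $g(\lambda)c_{\eta}=-\phi(W_{\lambda}v_{\alpha})$, which vanishes when $(a,b)\neq(1,0)$ (since then $W_{\lambda}v_{\alpha}=0$) and equals $-k\gamma\,c_{\eta}$ when $(a,b)=(1,0)$ (since then $W_{\lambda}v_{\alpha}=\gamma v_{\alpha}$). Reading off $g(\lambda)$ in the two cases gives precisely the assertions (1) and (2), with the common scalar being $-k$; conversely, any choice of $k$ produces a $\Psi$, hence a genuine trivial extension.

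I do not expect any real obstacle here: once the setup is fixed the content is a one-line computation. The only point deserving a little care is the reduction in the first paragraph, namely that the map witnessing triviality must be a $\mathbb{C}[\partial]$-module map which is the identity on $\mathbb{C}c_{\eta}$ and induces the identity on $M$, so that $\phi$ ranges only over $\mathbb{C}[\partial]$-module homomorphisms $M\to\mathbb{C}c_{\eta}$ rather than arbitrary $\mathbb{C}$-linear maps; this is immediate from the commutative diagram defining equivalence of extensions. The same bookkeeping will be reused when isolating trivial cocycles for the remaining types of extensions treated in the sequel.
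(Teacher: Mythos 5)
Your argument is correct and is essentially the paper's own proof in dual form: the paper works inside $E$ with a splitting generator $v_{\alpha}'=\varphi(\partial)v_{\alpha}+kc_{\eta}$ (showing $\varphi$ must be a nonzero constant), while you work with the equivalence map $\Psi$ and the induced $\mathbb{C}[\partial]$-homomorphism $\phi$, and both reduce to the same comparison giving $f(\lambda)=-k(\alpha\lambda+\beta+\eta)$ and $g(\lambda)=0$ (resp.\ $g(\lambda)=-k\gamma$). The only incidental difference is that you handle the case $(a,b)=(1,0)$ by the same direct computation, whereas the paper simply cites Corollary 6.1 of \cite{Yuan-Ling} for that part.
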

\begin{proof}
(1) Assume that (\ref{W1}) is a trivial extension, i.e., there exists $v_{\alpha}'=\varphi(\partial)v_{\alpha}+kc_{\eta}\in E$, where $k \in \mathbb{C}$ and $0\neq\varphi(\partial) \in \mathbb{C}[\partial]$, such that
\begin{eqnarray*}
L_\lambda v_{\alpha}'=(\partial+\alpha\lambda+\beta)v_{\alpha}'=(\partial+\alpha\lambda+\beta)\varphi(\partial)v_{\alpha}+k(\alpha\lambda+\beta+\eta)c_{\eta},\quad W_\lambda v_{\alpha}'=0.
\end{eqnarray*}
On the other hand, it follows from (\ref{W11}) that
\begin{eqnarray*}
&&L_\lambda v_{\alpha}'=(\partial+\alpha\lambda+\beta)\varphi(\partial+\lambda)v_{\alpha}+f(\lambda)\varphi(\partial+\lambda)c_{\eta},\\
&&W_\lambda v_{\alpha}'=g(\lambda)\varphi(\partial+\lambda)c_{\eta}.
\end{eqnarray*}
We can obtain that $\varphi(\partial)$ is a nonzero constant and $g(\lambda)=0$ by comparing both expressions for $L_\lambda v_{\alpha}'$ and $W_\lambda v_{\alpha}'$, respectively. Thus $f(\lambda)$ is a scalar multiple of $\alpha\lambda+\beta+\eta$.

(2) See Corollary 6.1 in \cite{Yuan-Ling}.
\end{proof}

\begin{theorem}\label{t33}
(1) If $(a,b)\neq (1,0)$, nontrivial extensions of finite irreducible $\mathcal{W}(a,b)$-modules of the form (\ref{W1}) exist. Moreover, they are given (up to equivalence) by (\ref{W11}). The values of $\beta$ and $\eta$ along with the pairs of polynomials $g(\lambda)$ and $f(\lambda)$, whose nonzero scalar multiples give rise to nontrivial extensions, are listed as follows:

\begin{enumerate}[(i)]
\item if $g(\lambda)=0$, then $\alpha=1,2$, $\beta+\eta=0$ and $f(\lambda)$ is from the nonzero polynomials of Theorem \ref{t25};
\item if $a\neq1$, $b=0$ and $\beta+\eta =0$, then $g(\lambda)=k$ for some nonzero complex number $k$, $\alpha=1-a$, and

$$f(\lambda)=
\begin{cases}
c_2{\lambda}^2,  &\alpha=1,
\cr c_3\lambda^3,  &\alpha=2,
\cr 0,  &otherwise,
\end{cases}$$
with $c_2, c_3 \in \mathbb{C}$;

\item if $a\neq1$, $b+\beta+\eta=0$ and $\beta+\eta \neq0$, then $g(\lambda)=k$ for some nonzero complex number $k$, $\alpha=1-a$, and $f(\lambda)=0$;
\item if $a=1, b\neq0$ and $b+\beta+\eta=0$, then $g(\lambda)=k(1-\frac{1}{b}\lambda)$ for some nonzero complex number $k$, $\alpha=1$, and $f(\lambda)=0$.
\end{enumerate}
(2) If $(a,b)=(1,0)$, nontrivial extensions of finite irreducible $\mathcal{W}(1,0)$-modules of the form (\ref{W1}) exist if and only if $\beta+\eta=0$ and $\gamma=0$. Moreover, they are given (up to equivalence) by (\ref{W12}), where, if $g(\lambda)=0$, then $\alpha=1,2$ and $f(\lambda)$ is from the nonzero polynomials of Theorem \ref{t25}, or else $g(\lambda)=k\lambda$ for some nonzero complex number $k$, $\alpha=1$ and $f(\lambda)=c_2{\lambda}^2$ with $c_2 \in \mathbb{C}$.

\end{theorem}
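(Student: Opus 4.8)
The plan is to convert the module axioms into functional equations for the pair $(f,g)$ and then solve them, the key observation being that $f$ and $g$ are constrained essentially independently.

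First I would spell out what it means for the formulas (\ref{W11}) to define a $\mathcal{W}(a,b)$-module structure on $E$, acting on the generator $v_{\alpha}$. The relation $[L_\lambda,L_\mu]v_{\alpha}=(\lambda-\mu)L_{\lambda+\mu}v_{\alpha}$ involves only the $L$-action, so its $c_\eta$-component is exactly the Virasoro cocycle identity for $f$; put differently, the restriction of $E$ along the embedding $Vir\hookrightarrow\mathcal{W}(a,b)$ is a Virasoro extension of the form (\ref{V1}), so by Theorem \ref{t25} the polynomial $f$ must, up to the coboundary $\alpha\lambda+\beta+\eta$, be one of the polynomials listed there; in particular, when $g=0$ nontriviality forces $\alpha\in\{1,2\}$, $\beta+\eta=0$ and $f$ as in case (i). Next, applying $[L_\lambda,W_\mu]v_{\alpha}=((a-1)\lambda-\mu+b)W_{\lambda+\mu}v_{\alpha}$ and using $L_\lambda c_\eta=W_\mu c_\eta=0$, every occurrence of $f$ drops out and one is left with the single equation
\begin{align}
-(\eta+\mu+\alpha\lambda+\beta)\,g(\mu)=((a-1)\lambda-\mu+b)\,g(\lambda+\mu).\label{Wg}
\end{align}
The remaining axioms, namely $[W_\lambda,W_\mu]v_{\alpha}=0$, the skew-symmetric companion (\ref{f33}) of (\ref{f32}), and $\partial$-compatibility, hold automatically. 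So the whole problem reduces to solving (\ref{Wg}) for the polynomial $g$ and, for each solution, reading off the admissible $f$ from Theorem \ref{t25} in terms of the now-constrained value of $\alpha$.

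Then I would solve (\ref{Wg}). Setting $\mu=0$ gives $((a-1)\lambda+b)g(\lambda)=-(\alpha\lambda+\beta+\eta)g(0)$. Since $(a,b)\neq(1,0)$, the factor $(a-1)\lambda+b$ is a nonzero polynomial; hence $g(0)=0$ forces $g\equiv0$, which is case (i). If $g(0)\neq0$, comparing degrees in this identity forces $\deg g=0$ when $a\neq1$ and $\deg g\leq1$ when $a=1$, and I would then substitute the resulting candidate back into the full two-variable identity (\ref{Wg}) to pin down the parameters. For $a\neq1$ one gets $g$ a nonzero constant together with $\alpha=1-a$ and $\beta+\eta+b=0$, which is case (ii) when $b=0$ (so $\beta+\eta=0$) and case (iii) when $b\neq0$ (so $\beta+\eta\neq0$). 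For $a=1$ (hence $b\neq0$) one gets $g(\lambda)=k(1-\frac{1}{b}\lambda)$, and consistency of (\ref{Wg}) additionally forces $\alpha=1$ and $\beta+\eta+b=0$, which is case (iv). In every case $\alpha$ and the value of $\beta+\eta$ are now fixed, so Theorem \ref{t25} immediately yields the possible $f$: in case (ii), $\beta+\eta=0$ gives $f=c_2\lambda^2$ for $\alpha=1$, $f=c_3\lambda^3$ for $\alpha=2$, and $f=0$ otherwise; in cases (iii) and (iv), $\beta+\eta\neq0$ forces $f=0$ (everything up to coboundary). To phrase the classification up to equivalence, I would finally invoke Lemma \ref{l32}: the $\mathcal{W}(a,b)$-coboundaries are precisely the scalar multiples of $(f,g)=(\alpha\lambda+\beta+\eta,\,0)$, so subtracting them never destroys $g\neq0$ and only normalizes $f$ in the same way as in the Virasoro case; this also shows that each of (i)--(iv) actually contains nontrivial extensions.

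I expect the main obstacle to be the case $a=1$, $b\neq0$ of (\ref{Wg}): there the $\mu=0$ specialization permits a degree-one $g$, but plugging the candidate into the full identity and matching the coefficient of $\lambda\mu$ forces $\alpha^2=\alpha$, hence $\alpha=1$ — this hidden rigidity is exactly what makes case (iv) so restrictive. Beyond it, the care-intensive part is the bookkeeping: checking which tuples $(\alpha,\beta,\eta,b)$ are compatible in each branch and verifying that cases (i)--(iv) are mutually exclusive and exhaustive once $(a,b)\neq(1,0)$. The Heisenberg--Virasoro case $(a,b)=(1,0)$ in part (2) is not reproved here; it is quoted from \cite{Ling-Yuan1,Yuan-Ling}.
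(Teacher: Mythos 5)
Your proposal is correct and follows essentially the same route as the paper: both derive the two functional equations by applying (\ref{f31}) and (\ref{f32}) to $v_{\alpha}$, solve the $f$-equation by quoting Theorem \ref{t25}, solve the $g$-equation by specializing a variable (the paper sets $\lambda=0$ first to get $(b+\beta+\eta)g(\mu)=0$, you set $\mu=0$ and argue by degrees, then back-substitute — the same computation in a slightly different order), and normalize via the coboundaries of Lemma \ref{l32}, with part (2) quoted from the Heisenberg--Virasoro literature exactly as the paper does. Your extra remark that matching the $\lambda\mu$-coefficient forces $\alpha^2=\alpha$ (hence $\alpha=1$, since $\alpha\neq 0$ by irreducibility) in the case $a=1$, $b\neq 0$ is a correct detail the paper leaves implicit.
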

\begin{proof}
(1) Applying both sides of (\ref{f31}) and (\ref{f32}) to $v_{\alpha}$, we obtain
\begin{eqnarray}
&&(\lambda-\mu)f(\lambda+\mu)=(\alpha\mu+\lambda+\beta+\eta)f(\lambda)-(\alpha\lambda+\mu+\beta+\eta)f(\mu), \label{f310}\\
&&((a-1)\lambda-\mu+b)g(\lambda+\mu)=-(\alpha\lambda+\mu+\beta+\eta)g(\mu). \label{f311}
\end{eqnarray}
Setting $\lambda=0$ in (\ref{f311}) gives
\begin{eqnarray}
(b+\beta+\eta)g(\mu)=0. \label{f312}
\end{eqnarray}

\textbf{Case 1.} $b+\beta+\eta \neq0$.

By (\ref{f312}), $g(\mu)=0$. It reduces to the case of Virasoro conformal algebra. We can obtain the result by Theorem \ref{t25}.

\textbf{Case 2.} $b+\beta+\eta =0$.

If $g(\lambda)=0$, then it reduces to the case of Virasoro conformal algebra. We obtain the result by Theorem \ref{t25}.

Now we assume that $g(\lambda)\neq0$. Setting $\mu=0$ in (\ref{f311}), we obtain that
\begin{eqnarray}
((a-1)\lambda+b)g(\lambda)=-(\alpha\lambda+\beta+\eta)g(0). \label{eq313}
\end{eqnarray}

By solving the equation (\ref{eq313}) and combining Theorem \ref{t25} and Lemma \ref{l32}, we can draw the following conclusions:

 If $a\neq1$ and $\beta+\eta =0$, then $g(\lambda)=k$ for some nonzero complex number $k$, $\alpha=1-a$, and

$$f(\lambda)=
\begin{cases}
c_2{\lambda}^2,  &\alpha=1,
\cr c_3\lambda^3,  &\alpha=2,
\cr 0,  &otherwise,
\end{cases}$$
with $c_2, c_3 \in \mathbb{C}$.

If $a\neq1$ and $\beta+\eta \neq0$, then $g(\lambda)=k$ for some nonzero complex number $k$, $\alpha=1-a$, and $f(\lambda)=0$.

If $a=1$ and $b\neq 0$, then $\beta+\eta \neq0$, then $g(\lambda)=k(1-\frac{1}{b}\lambda)$, $\alpha=1$ for some nonzero complex number $k$ and $f(\lambda)=0$.

(2) See Corollary 6.1 in \cite{Yuan-Ling}.

This completes the proof.
\end{proof}
Next, we consider extensions of finite irreducible $\mathcal{W}(a,b)$-modules of the form
\begin{align}
0\longrightarrow M \longrightarrow  E\longrightarrow  \mathbb{C}c_{\eta} \longrightarrow   0\label{W2}.
\end{align}
As we described in the Section 2, $E$ as a vector space in (\ref{W2}) is isomorphic to $ M\oplus \mathbb{C}c_{\eta}$, where $M$ is a $\mathcal{W}(a,b)$-submodule, and $M=\mathbb{C}[\partial]v_{\alpha}$ such that the following identities hold in $E$:
\begin{equation}
  L_\lambda c_{\eta}=f(\partial,\lambda)v_{\alpha},\quad W_\lambda c_{\eta}=g(\partial,\lambda)v_{\alpha},\quad \partial c_{\eta}=\eta c_{\eta} +p(\partial)v_{\alpha}, \label{W21}
 \end{equation}
 where $f(\partial,\lambda), g(\partial,\lambda) \in \mathbb{C}[\partial,\lambda]$ and $p(\partial) \in \mathbb{C}[\partial]$.

\begin{lemma}\label{l34}
All trivial extensions of finite irreducible $\mathcal{W}(a,b)$-modules of the form (\ref{W2}) are given by (\ref{W21}), and
\begin{enumerate}
\item If $(a,b)\neq (1,0)$, $f(\partial,\lambda)=(\partial+\alpha\lambda+\beta)\phi(\partial+\lambda)$, $g(\partial,\lambda)=0$ and $p(\partial)=(\partial-\eta)\phi(\partial)$, where $\phi$ is a polynomial.
\item If $(a,b)=(1,0)$, $f(\partial,\lambda)=(\partial+\alpha\lambda+\beta)\phi(\partial+\lambda)$, $g(\partial,\lambda)=\gamma \phi(\partial+\lambda)$ and $p(\partial)=(\partial-\eta)\phi(\partial)$, where $\phi$ is a polynomial.
\end{enumerate}
\end{lemma}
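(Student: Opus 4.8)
The plan is to argue exactly as in the trivial-extension computation of Lemma \ref{l32}, now carried out in the dual direction. Suppose the extension (\ref{W2}) is trivial, so there is a splitting given by a new generator $c_{\eta}'=c_{\eta}+\phi(\partial)v_{\alpha}$ for some $\phi\in\mathbb{C}[\partial]$ which generates a $\mathcal{W}(a,b)$-submodule complementary to $M$; that is, $L_\lambda c_{\eta}'=0$, $W_\lambda c_{\eta}'=0$ and $\partial c_{\eta}'=\eta c_{\eta}'$. First I would compute each of these three actions using (\ref{W21}) together with conformal sesquilinearity (so that $L_\lambda(\phi(\partial)v_{\alpha})=\phi(\partial+\lambda)L_\lambda v_{\alpha}$, and similarly for $W_\lambda$), and compare.

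Concretely, from $\partial c_{\eta}'=\eta c_{\eta}'$ we get $\eta c_{\eta}+p(\partial)v_{\alpha}+\partial\phi(\partial)v_{\alpha}=\eta c_{\eta}+\eta\phi(\partial)v_{\alpha}$, which forces $p(\partial)=(\partial-\eta)\phi(\partial)$. From $L_\lambda c_{\eta}'=0$ we get $f(\partial,\lambda)v_{\alpha}+\phi(\partial+\lambda)(\partial+\alpha\lambda+\beta)v_{\alpha}=0$ (in case $(a,b)\ne(1,0)$, where $L_\lambda v_{\alpha}=(\partial+\alpha\lambda+\beta)v_{\alpha}$), hence $f(\partial,\lambda)=-(\partial+\alpha\lambda+\beta)\phi(\partial+\lambda)$; absorbing the sign into $\phi$ (i.e.\ replacing $\phi$ by $-\phi$, which is harmless since $\phi$ ranges over all polynomials) yields the stated form $f(\partial,\lambda)=(\partial+\alpha\lambda+\beta)\phi(\partial+\lambda)$ with the matching $p(\partial)=(\partial-\eta)\phi(\partial)$. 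Finally $W_\lambda c_{\eta}'=0$ gives $g(\partial,\lambda)v_{\alpha}+\phi(\partial+\lambda)W_\lambda v_{\alpha}=0$: when $(a,b)\ne(1,0)$ we have $W_\lambda v_{\alpha}=0$ so $g(\partial,\lambda)=0$, and when $(a,b)=(1,0)$ we have $W_\lambda v_{\alpha}=\gamma v_{\alpha}$ so $g(\partial,\lambda)=-\gamma\phi(\partial+\lambda)$, again $=\gamma\phi(\partial+\lambda)$ after the same sign normalization. Conversely, one checks that for any polynomial $\phi$ the triple defined by these formulas does give a splitting (equivalently, satisfies the cocycle relations and the coboundary condition), so these are precisely the trivial extensions.

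There is essentially no obstacle here; the only points requiring a little care are the consistent sign/normalization of $\phi$ across the three defining relations (so that the single polynomial $\phi$ simultaneously describes $f$, $g$, and $p$), and verifying that the construction is reversible so that the list is exhaustive rather than merely necessary. The case split $(a,b)\ne(1,0)$ versus $(a,b)=(1,0)$ only affects the $W$-action through the value of $W_\lambda v_{\alpha}$ ($0$ versus $\gamma v_{\alpha}$), so the two parts of the statement are handled in parallel by the same computation. For part (2) one may alternatively just cite the corresponding result of \cite{Yuan-Ling} as was done in Lemma \ref{l32}.
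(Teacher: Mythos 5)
Your proposal is correct and follows essentially the same route as the paper: write a splitting generator $c_{\eta}'=kc_{\eta}+\phi(\partial)v_{\alpha}$ (you normalize $k=1$), compute $L_\lambda c_{\eta}'$, $W_\lambda c_{\eta}'$, $\partial c_{\eta}'$ via sesquilinearity, and compare, with the sign absorbed into $\phi$ exactly as the paper implicitly does. The only difference is that for the case $(a,b)=(1,0)$ you carry out the same computation directly, whereas the paper simply cites Corollary 6.2 of \cite{Yuan-Ling}; both are fine.
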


\begin{proof}
(1) Assume that (\ref{W2}) is a trivial extension, i.e., there exists $c_{\eta}'=kc_{\eta}+\phi(\partial)v_{\alpha}\in E$, where $0\neq k \in \mathbb{C}$ and $\phi(\partial) \in \mathbb{C}[\partial]$, such that $L_\lambda c_{\eta}'=W_\lambda c_{\eta}'=0$ and $\partial c_{\eta}'=\eta c_{\eta}'$. On the other hand, it follows from (\ref{W21}) that
\begin{eqnarray*}
&&L_\lambda c_{\eta}'=(\partial+\alpha\lambda+\beta)\phi(\partial+\lambda)v_{\alpha}+kf(\partial,\lambda)v_{\alpha},\\
&&W_\lambda c_{\eta}'=kg(\partial,\lambda)v_{\alpha},\\
&&\partial c_{\eta}'=k\eta c_{\eta}+(kp(\partial)+\partial\phi(\partial))v_{\alpha}.
\end{eqnarray*}
We obtain the result by comparing both expressions for $L_\lambda c_{\eta}'$, $W_\lambda c_{\eta}'$ and $\partial c_{\eta}'$, respectively.

(2) See Corollary 6.2 in \cite{Yuan-Ling}.
\end{proof}

\begin{theorem}\label{t35}
(1) If $(a,b)\neq (1,0)$, nontrivial extensions of finite irreducible $\mathcal{W}(a,b)$-modules of the form (\ref{W2}) exist if and only if $\beta+\eta=0$ and $\alpha=1$. In this case, $dim Ext(\mathbb{C}c_{-\beta}, M_{1,\beta})=1$, and the unique (up to equivalence) nontrivial extension is given by
\begin{align*}
L_\lambda c_{\eta}=kv_{\alpha},\quad W_\lambda c_{\eta}=0,\quad \partial c_{\eta}=\eta c_{\eta}+kv_{\alpha},
\end{align*}
where $k$ is a nonzero complex number.\\
(2) If $(a,b)=(1,0)$, nontrivial extensions of finite irreducible $\mathcal{W}(1,0)$-modules of the form (\ref{W2}) exist if and only if $\beta+\eta=0$ and $(\alpha,\gamma)=(1,0)$. In this case, $dim Ext(\mathbb{C}c_{-\beta}, M_{1,\beta,0})=1$, and the unique (up to equivalence) nontrivial extension is given by
\begin{align*}
L_\lambda c_{\eta}=kv_{\alpha},\quad W_\lambda c_{\eta}=0,\quad \partial c_{\eta}=\eta c_{\eta}+kv_{\alpha},
\end{align*}
where $k$ is a nonzero complex number.
\end{theorem}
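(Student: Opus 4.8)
The plan is to turn the conformal-module axioms for $E$ into a system of polynomial identities for the cocycle data $f(\partial,\lambda)$, $g(\partial,\lambda)$, $p(\partial)$ appearing in (\ref{W21}), and then to observe that the identities produced by the Virasoro subalgebra $\mathbb{C}[\partial]L\subseteq\mathcal{W}(a,b)$ are exactly those already solved in Theorem \ref{t2}, while the identity produced by the $W$-action forces $g=0$. Since part (2) is proved in \cite{Yuan-Ling} (Corollary 6.2), I will take it as known and concentrate on part (1); thus throughout $(a,b)\neq(1,0)$ and $M=M_{\alpha,\beta}$ with $W_\lambda v_\alpha=0$.

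First I would apply the sesquilinearity relation $W_\lambda(\partial c_\eta)=(\partial+\lambda)W_\lambda c_\eta$ to $c_\eta$, using (\ref{W21}) together with $W_\lambda v_\alpha=0$. The term $p(\partial+\lambda)W_\lambda v_\alpha$ vanishes, and what remains is $(\partial+\lambda-\eta)g(\partial,\lambda)=0$ in the integral domain $\mathbb{C}[\partial,\lambda]$, forcing $g(\partial,\lambda)=0$. With $W_\lambda c_\eta=0$ and $W_\lambda v_\alpha=0$, the remaining relations (\ref{f32})--(\ref{f34}) applied to $c_\eta$ are then satisfied trivially, so the $W$-sector contributes no further constraint.

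Next, evaluating the relation (\ref{f31}) on $c_\eta$ and using $L_\lambda(\partial c_\eta)=(\partial+\lambda)L_\lambda c_\eta$, I obtain the two identities
\[
(\partial+\lambda-\eta)f(\partial,\lambda)=p(\partial+\lambda)(\partial+\alpha\lambda+\beta),
\]
\[
f(\partial+\lambda,\mu)(\partial+\alpha\lambda+\beta)-f(\partial+\mu,\lambda)(\partial+\alpha\mu+\beta)=(\lambda-\mu)f(\partial,\lambda+\mu),
\]
which are precisely the equations governing an extension of the Virasoro module $\mathbb{C}c_\eta$ by $M_{\alpha,\beta}$ of the form (\ref{V2}). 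Hence Theorem \ref{t2} applies verbatim: a nonzero solution exists if and only if $\beta+\eta=0$ and $\alpha=1$, in which case $f(\partial,\lambda)=p(\partial)=k$ for some $k\in\mathbb{C}$; and by Lemma \ref{l34} the coboundaries are the scalar multiples of $f(\partial,\lambda)=(\partial+\alpha\lambda+\beta)\phi(\partial+\lambda)$, $p(\partial)=(\partial-\eta)\phi(\partial)$. A nonzero constant $k$ is not of this form, so $\dim Ext(\mathbb{C}c_{-\beta},M_{1,\beta})=1$ and the unique nontrivial extension is the one displayed in the statement. For the converse I would simply substitute $\alpha=1$, $\eta=-\beta$, $f=p=k$, $g=0$ into (\ref{W21}) and verify (\ref{f31})--(\ref{f36}) directly.

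I do not expect a genuine obstacle here: the only delicate point is organizational, namely checking that the full axiom system for $E$ as a $\mathcal{W}(a,b)$-module really does split --- the $W$-relations forcing $g=0$ and then adding nothing, the $L$-relations reducing exactly to the already-solved Virasoro system --- so that no extension with $g\neq 0$ is overlooked. The contrast with part (2) explains why: when $W_\lambda v_\alpha=\gamma v_\alpha$ the same relation gives $(\partial+\lambda-\eta)g(\partial,\lambda)=\gamma\,p(\partial+\lambda)$ instead, so $g$ need not vanish and the reduction is no longer immediate, which is precisely why the condition there becomes $(\alpha,\gamma)=(1,0)$; for that case we quote \cite{Yuan-Ling}.
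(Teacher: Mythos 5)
Your proposal is correct and follows essentially the same route as the paper: applying the $\partial$-sesquilinearity of the $W$-action to $c_\eta$ yields $(\partial+\lambda-\eta)g(\partial,\lambda)=0$, hence $g=0$, after which the $L$-sector is exactly the Virasoro system of Theorem \ref{t2}, with Lemma \ref{l34} giving the coboundaries and part (2) quoted from \cite{Yuan-Ling}. No gaps.
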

\begin{proof}
(1) Applying both sides of (\ref{f31}), (\ref{f35}) and (\ref{f36}) to $c_{\eta}$ gives the following equations:
\begin{eqnarray}
&&(\partial+\alpha\lambda+\beta)f(\partial+\lambda,\mu)-(\partial+\alpha\mu+\beta)f(\partial+\mu,\lambda)=(\lambda-\mu)f(\partial,\lambda+\mu),\label{f319}\\
&&(\partial+\lambda-\eta)f(\partial,\lambda)=(\partial+\alpha\lambda+\beta)p(\partial+\lambda),\label{f320}\\
&&(\partial+\lambda-\eta)g(\partial,\lambda)=0. \label{f321}
\end{eqnarray}
Obviously, $g(\partial,\lambda)=0$ by (\ref{f321}). This reduces to the case of Virasoro conformal algebra. We can obtain the result by Theorem \ref{t2}.

(2) See Corollary 6.2 in \cite{Yuan-Ling}.

This completes the proof.
\end{proof}

Finally, we consider extensions of finite irreducible $\mathcal{W}(a,b)$-modules of the form
\begin{align}
0\longrightarrow \bar{M} \longrightarrow E\longrightarrow  M \longrightarrow   0\label{W3}
\end{align}
Since $M$ is free as a $\mathbb{C}[\partial]$-module, $E$ as a $\mathbb{C}[\partial]$-module in (\ref{W3}) is isomorphic to $ \bar{M}\oplus M$, where $\bar{M}$ is a $\mathcal{W}(a,b)$-submodule, and $\bar{M}=\mathbb{C}[\partial]v_{\bar{\alpha}}$,  $M=\mathbb{C}[\partial]v_{\alpha}$ such that the following identities hold in $E$:

\begin{enumerate}
\item If $(a,b)\neq (1,0)$,
  \begin{equation}
  L_\lambda v_{\alpha}=(\partial+\alpha\lambda+\beta)v_{\alpha}+f(\partial,\lambda)v_{\bar{\alpha}},\quad W_\lambda v_{\alpha}=g(\partial,\lambda)v_{\bar{\alpha}};\label{W31}
   \end{equation}

  \item If $(a,b)=(1,0)$,
 \begin{equation}
 L_\lambda v_{\alpha}=(\partial+\alpha\lambda+\beta)v_{\alpha}+f(\partial,\lambda)v_{\bar{\alpha}},\quad W_\lambda v_{\alpha}=\gamma v_{\alpha}+g(\partial,\lambda)v_{\bar{\alpha}};\label{W32}
  \end{equation}
\end{enumerate}
where $f(\partial,\lambda), g(\partial,\lambda) \in \mathbb{C}[\partial,\lambda]$.

\begin{lemma}\label{l36}
All trivial extensions of finite irreducible $\mathcal{W}(a,b)$-modules of the form (\ref{W3}) are given by (\ref{W31}) and (\ref{W32}), and
\begin{enumerate}
\item If $(a,b)\neq (1,0)$, $f(\partial,\lambda)$ is a scalar multiple of $(\partial+\alpha\lambda+\beta)\phi(\partial)-(\partial+\bar{\alpha}\lambda+\bar{\beta})\phi(\partial+\lambda)$ and $g(\partial,\lambda)=0$, where $\phi$ is a polynomial.
\item If $(a,b)=(1,0)$, $f(\partial,\lambda)$ and $g(\partial,\lambda)$ are the same scalar multiple of $(\partial+\alpha\lambda+\beta)\phi(\partial)-(\partial+\bar{\alpha}\lambda+\bar{\beta})\phi(\partial+\lambda)$ and $\gamma\phi(\partial)-\bar{\gamma}\phi(\partial+\lambda)$, respectively, where $\phi$ is a polynomial.

\end{enumerate}
\end{lemma}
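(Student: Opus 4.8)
The plan is to imitate exactly the proof of Lemma \ref{l32} and Lemma \ref{l34}, since a trivial extension of the form (\ref{W3}) is by definition one equivalent to the split extension $0\to \bar M\to \bar M\oplus M\to M\to 0$. Concretely, to say that (\ref{W3}) with the action (\ref{W31}) (resp.\ (\ref{W32})) is trivial means there is a $\mathbb C[\partial]$-module splitting of $p$, i.e.\ an element $v_\alpha'=v_\alpha+\phi(\partial)v_{\bar\alpha}\in E$ for some $\phi\in\mathbb C[\partial]$ such that the $\mathbb C[\partial]$-submodule $\mathbb C[\partial]v_\alpha'$ is a $\mathcal W(a,b)$-submodule isomorphic to the irreducible module $M$ (with the same labels $\alpha,\beta$, and $\gamma$ in the $(1,0)$ case). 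So first I would write down what the $L_\lambda$- and $W_\lambda$-action must be on $v_\alpha'$ if it generates a copy of $M$: namely $L_\lambda v_\alpha'=(\partial+\alpha\lambda+\beta)v_\alpha'$ and $W_\lambda v_\alpha'=0$ (resp.\ $=\gamma v_\alpha'$).

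Next I would compute the same quantities directly from the formulas (\ref{W31})--(\ref{W32}) together with conformal sesquilinearity. Using $L_\lambda(\phi(\partial)v_{\bar\alpha})=\phi(\partial+\lambda)L_\lambda v_{\bar\alpha}=\phi(\partial+\lambda)(\partial+\bar\alpha\lambda+\bar\beta)v_{\bar\alpha}$, one gets
\begin{align*}
L_\lambda v_\alpha'&=(\partial+\alpha\lambda+\beta)v_\alpha+\big(f(\partial,\lambda)+(\partial+\bar\alpha\lambda+\bar\beta)\phi(\partial+\lambda)\big)v_{\bar\alpha},\\
W_\lambda v_\alpha'&=\gamma v_\alpha+\big(g(\partial,\lambda)+\bar\gamma\,\phi(\partial+\lambda)\big)v_{\bar\alpha},
\end{align*}
where in case (1) one sets $\gamma=\bar\gamma=0$. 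Rewriting $(\partial+\alpha\lambda+\beta)v_\alpha=(\partial+\alpha\lambda+\beta)v_\alpha'-(\partial+\alpha\lambda+\beta)\phi(\partial)v_{\bar\alpha}$ and similarly $\gamma v_\alpha=\gamma v_\alpha'-\gamma\phi(\partial)v_{\bar\alpha}$, and matching against the required action on $v_\alpha'$, I would conclude that triviality is equivalent to the pair of identities
\[
f(\partial,\lambda)=(\partial+\alpha\lambda+\beta)\phi(\partial)-(\partial+\bar\alpha\lambda+\bar\beta)\phi(\partial+\lambda),\qquad
g(\partial,\lambda)=\gamma\phi(\partial)-\bar\gamma\,\phi(\partial+\lambda),
\]
with $g\equiv 0$ in case (1); and the "scalar multiple" phrasing in the statement just records that the splitting element could also be rescaled by a nonzero constant (which does not change the cocycle and can be absorbed into $\phi$, but for case (2) the common scalar is what links the $f$- and $g$-formulas). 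Finally I would note that any such $\phi$ genuinely produces a splitting, so these are exactly the trivial cocycles.

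The only point that needs care — and it is the same point as in Lemmas \ref{l32} and \ref{l34} — is checking that when the labels $\alpha,\beta$ (and $\gamma$) of the candidate submodule are forced to coincide with those of $M$: the $\mathbb C[\partial]$-module isomorphism $E/\bar M\cong M$ is label-preserving, so the equivalence diagram forces the quotient action to be the $M$-action, hence $v_\alpha'$ must satisfy precisely $L_\lambda v_\alpha'=(\partial+\alpha\lambda+\beta)v_\alpha'$ and $W_\lambda v_\alpha'=\gamma v_\alpha'$; no other constants are available. There is no genuine obstacle here beyond bookkeeping, since the whole content is the linearity of the $\lambda$-action over $\mathbb C[\partial]$ and the Virasoro computation already done in \cite{CKW1}; the $W$-part is immediate because $W_\lambda v_{\bar\alpha}=\bar\gamma v_{\bar\alpha}$ (with $\bar\gamma=0$ unless $(a,b)=(1,0)$). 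I would present the argument only for case (1) in detail and refer to \cite{Yuan-Ling} (Corollary 6.3 or the analogous statement) for case (2), exactly as done in the preceding lemmas.
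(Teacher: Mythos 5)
Your proposal is correct and follows essentially the same route as the paper: compare the required action $L_\lambda v_\alpha'=(\partial+\alpha\lambda+\beta)v_\alpha'$, $W_\lambda v_\alpha'=\gamma v_\alpha'$ on a splitting element $v_\alpha'=v_\alpha+\phi(\partial)v_{\bar\alpha}$ with the action computed from (\ref{W31})--(\ref{W32}) by sesquilinearity, and read off the coboundary formulas for $f$ and $g$. The only cosmetic difference is that the paper starts from a general element $\varphi(\partial)v_\alpha+\phi(\partial)v_{\bar\alpha}$ and deduces that $\varphi$ is a nonzero constant (whence the ``scalar multiple''), while you normalize $\varphi=1$ via the definition of equivalence of extensions and absorb the scalar into $\phi$ --- both are fine, and like the paper you defer the $(a,b)=(1,0)$ case to \cite{Yuan-Ling}.
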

\begin{proof}
(1) Assume that (\ref{W3}) is a trivial extension, i.e., there exists $v_{\alpha}'=\varphi(\partial)v_{\alpha}+\phi(\partial)v_{\bar{\alpha}}\in E$, where $\varphi(\partial), \phi(\partial) \in \mathbb{C}[\partial]$ and $\varphi(\partial) \neq 0$, such that
\begin{eqnarray*}
L_\lambda v_{\alpha}'=(\partial+\alpha\lambda+\beta)v_{\alpha}'=(\partial+\alpha\lambda+\beta)(\varphi(\partial)v_{\alpha}+\phi(\partial)v_{\bar{\alpha}}),\quad W_\lambda v_{\alpha}'=0.
\end{eqnarray*}
On the other hand, it follows from (\ref{W31}) that
\begin{eqnarray*}
L_\lambda v_{\alpha}'&=&L_\lambda(\varphi(\partial)v_{\alpha}+\phi(\partial)v_{\bar{\alpha}})\\
                                &=&\varphi(\partial+\lambda)L_\lambda v_{\alpha}+\phi(\partial+\lambda)L_\lambda v_{\bar{\alpha}}\\
                                   &=&\varphi(\partial+\lambda)((\partial+\alpha\lambda+\beta)v_{\alpha}+f(\partial,\lambda)v_{\bar{\alpha}})+\phi(\partial+\lambda)(\partial+\bar{\alpha}\lambda+\bar{\beta})v_{\bar{\alpha}}\\
                                   &=&\varphi(\partial+\lambda)(\partial+\alpha\lambda+\beta)v_{\alpha}+(\varphi(\partial+\lambda)f(\partial,\lambda)+\phi(\partial+\lambda)(\partial+\bar{\alpha}\lambda+\bar{\beta}))v_{\bar{\alpha}},\\
W_\lambda v_{\alpha}'&=&\varphi(\partial+\lambda)g(\partial,\lambda)v_{\bar{\alpha}}.
\end{eqnarray*}
We can obtain that $\varphi(\partial)$ is a nonzero constant and $g(\partial,\lambda)=0$ by comparing both expressions for $L_\lambda v_{\alpha}'$ and $W_\lambda v_{\alpha}'$, respectively. Thus $f(\partial,\lambda)$ is a scalar multiple of $(\partial+\alpha\lambda+\beta)\phi(\partial)-(\partial+\bar{\alpha}\lambda+\bar{\beta})\phi(\partial+\lambda)$.

(2) See Corollary 6.3 in \cite{Yuan-Ling}.
\end{proof}

Now, we can consider nontrivial extensions of finite irreducible $\mathcal{W}(a,b)$-modules of the form (\ref{W3}) when $(a,b)\neq (1,0)$.

Applying both sides of (\ref{f31}) and (\ref{f32}) to $v_{\alpha}$ gives the following equations:
\begin{align}
(\lambda-\mu)f(\partial,\lambda+\mu) &=(\partial+\alpha\mu+\lambda+\beta)f(\partial,\lambda)+(\partial+\bar{\alpha}\lambda+\bar{\beta})f(\partial+\lambda,\mu) \nonumber \\
&\quad-(\partial+\alpha\lambda+\mu+\beta)f(\partial,\mu)-(\partial+\bar{\alpha}\mu+\bar{\beta})f(\partial+\mu,\lambda),\label{f332}\\
((a-1)\lambda-\mu+b)g(\partial,\lambda+\mu) &=(\partial+\bar{\alpha}\lambda+\bar{\beta})g(\partial+\lambda,\mu)-(\partial+\alpha\lambda+\mu+\beta)g(\partial,\mu).\label{f333}
\end{align}
Setting $\lambda=0$ in (\ref{f332}) and (\ref{f333}) gives that
\begin{align}
&(\beta-\bar{\beta})f(\partial,\mu) =(\partial+\alpha\mu+\beta)f(\partial,0)-(\partial+\bar{\alpha}\mu+\bar{\beta})f(\partial+\mu,0),\label{f334}\\
&(\beta-\bar{\beta}+b)g(\partial,\mu) =0.\label{f335}
\end{align}

\textbf{Case 1.} $\beta-\bar{\beta} \neq0$, $\beta-\bar{\beta}+b \neq0$.

By (\ref{f334}) and (\ref{f335}), we obtain that $f(\partial,\mu) =\frac{1}{\beta-\bar{\beta}}((\partial+\alpha\mu+\beta)f(\partial,0)-(\partial+\bar{\alpha}\mu+\bar{\beta})f(\partial+\mu,0))$ and $g(\partial,\mu)=0$. This corresponds to the trivial extension by Lemma \ref{l36}(1).

\textbf{Case 2.} $\beta-\bar{\beta} =0$, $b\neq0$.

By (\ref{f335}), we obtain that $g(\partial,\mu)=0$, and it reduces to the case of Virasoro conformal algebra. Then we obtain the result by Theorem \ref{t3}.

\textbf{Case 3.} $\beta-\bar{\beta} \neq0$, $\beta-\bar{\beta}+b=0$, $a \neq1$.

By (\ref{f334}), we obtain that $f(\partial,\mu) =\frac{1}{\beta-\bar{\beta}}((\partial+\alpha\mu+\beta)f(\partial,0)-(\partial+\bar{\alpha}\mu+\bar{\beta})f(\partial+\mu,0))$. Thus $g(\partial,\lambda)\neq 0$. Otherwise, it corresponds to the trivial extension by Lemma \ref{l36}(1). In fact, we can take a shift to let $f(\partial,\mu)=0$ by Lemma \ref{l36}(1).  If $g(\partial,\lambda)=\sum_{n=0}^{m}\sum_{i=0}^{n}a_{ni}\partial^{n-i}\lambda^i$ is the solution of (\ref{f333}), where $a_{ni} \in \mathbb{C}$ and $m$ is the highest degree of $g(\partial,\lambda)$, then $\sum_{i=0}^{m}a_{mi}\partial^{m-i}\lambda^i$ is the solution of the following homogeneous equation:
\begin{align}
((a-1)\lambda-\mu)g(\partial,\lambda+\mu) &=(\partial+\bar{\alpha}\lambda)g(\partial+\lambda,\mu)-(\partial+\alpha\lambda+\mu)g(\partial,\mu).\label{f336}
\end{align}
By Lemma 3.6 in \cite{Ling-Yuan2}, we obtain all solutions of (\ref{f336}) as follows.
\begin{proposition}\label{pro37}(Ref. \cite{Ling-Yuan2}, Lemma 3.6)
Let $g(\partial,\lambda)$ be a nonzero homogeneous polynomial of degree m satisfying (\ref{f336}) with $a\neq1$. Then $\alpha-\bar{\alpha}=m+1-a$ and $m \leq 3$. Furthermore, we have\\
(1) For $a=\frac{5}{3}$, all solutions (up to a scalar) to (\ref{f336}) are given by
\begin{enumerate}[(i)]
\item $m=0, \alpha-\bar{\alpha}=-\frac{2}{3}$, and $g(\partial,\lambda)=1$;
\item $m=1, \alpha-\bar{\alpha}=\frac{1}{3}$, and $g(\partial,\lambda)=\partial+\frac{3}{2}\bar{\alpha}\lambda$;
\item $m=2, \alpha=1, \bar{\alpha}=-\frac{1}{3}$, and $g(\partial,\lambda)=\partial^2+\frac{1}{2}\partial\lambda-\frac{1}{2}\lambda^2$;
\item $m=3, \alpha=\frac{5}{3}, \bar{\alpha}=-\frac{2}{3}$, and $g(\partial,\lambda)=\partial^3+\frac{3}{2}\partial^2\lambda-\frac{3}{2}\partial\lambda^2-\lambda^3$,
\end{enumerate}
(2) For $a\neq\frac{5}{3}$, all solutions (up to a scalar) to (\ref{f336}) are given by
\begin{enumerate}[(i)]
\item $m=0, \alpha-\bar{\alpha}=1-a$, and $g(\partial,\lambda)=1$;
\item $m=1, \alpha-\bar{\alpha}=2-a$, and $g(\partial,\lambda)=\partial-\frac{1}{1-a}\bar{\alpha}\lambda$;
\item $m=2, \alpha=1, \bar{\alpha}=a-2$, and $g(\partial,\lambda)=\partial^2-\frac{1}{1-a}(1+2\bar{\alpha})\partial\lambda-\frac{1}{1-a}\bar{\alpha}\lambda^2$.
\end{enumerate}
\end{proposition}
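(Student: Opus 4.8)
The strategy is to attack the functional equation (\ref{f336}) directly by coefficient comparison. Write the unknown homogeneous polynomial as $g(\partial,\lambda)=\sum_{i=0}^{m}a_i\partial^{m-i}\lambda^i$ with the $a_i$ not all zero; substituted into (\ref{f336}) this becomes a homogeneous polynomial identity of degree $m+1$ in the three indeterminates $\partial,\lambda,\mu$, so the whole problem reduces to bookkeeping of its coefficients under a few well-chosen specializations.

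First I would set $\mu=0$ in (\ref{f336}). Since $g(\partial,0)=a_0\partial^m$, the equation collapses to $(a-1)\lambda\,g(\partial,\lambda)=a_0\bigl((\partial+\bar\alpha\lambda)(\partial+\lambda)^m-(\partial+\alpha\lambda)\partial^m\bigr)$. As $a\neq1$, this forces $a_0\neq0$ (otherwise $g=0$), so after rescaling $a_0=1$; comparing the coefficient of $\partial^m\lambda$ on the two sides yields the weight relation $\alpha-\bar\alpha=m+1-a$, and the displayed identity then determines $g$ completely,
\[ g(\partial,\lambda)=\frac{(\partial+\bar\alpha\lambda)(\partial+\lambda)^m-(\partial+\alpha\lambda)\partial^m}{(a-1)\lambda}, \]
which is a genuine polynomial because its numerator vanishes at $\lambda=0$. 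Only the discrete data $m,\alpha,\bar\alpha,a$ (linked by the weight relation) now remain to be constrained.

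Next I would feed this explicit $g$ back into (\ref{f336}) and force the resulting degree-$(m+1)$ identity to hold. When $\bar\alpha\neq0$, setting $\partial=0$ reduces (\ref{f336}) to the $a$-free symmetric identity
\[ \bigl(\lambda^2+\mu^2-(m-\alpha)\lambda\mu\bigr)(\lambda+\mu)^m=\lambda^{m+2}+\mu^{m+2}+\alpha\lambda\mu(\lambda^m+\mu^m). \]
Three elementary steps applied to this identity give the bound $m\leq3$: putting $\mu=-\lambda$ annihilates its left side for $m\geq1$ and forces $\alpha=1$ when $m$ is even; comparing the coefficient of $\lambda^m\mu^2$ gives $\alpha=\tfrac{(m+2)(m-1)}{2m}$ for $m\geq2$, which together with the previous step already excludes every even $m\geq4$; and evaluating at $\lambda=\mu=1$ yields $(-m^2+5m-2)\,2^{m-1}=m^2+3m-2$, which excludes every odd $m\geq5$. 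The cases $m=0,1$ are checked directly, and the branch $\bar\alpha=0$ is treated in parallel (there $\partial=0$ trivializes, so a $\partial$-dependent coefficient is used in its place). This leaves $\alpha=1$ at $m=2$ and $\alpha=\tfrac53$ at $m=3$; the weight relation then gives $\bar\alpha=a-2$ at $m=2$, while one further $\partial$-dependent coefficient comparison of (\ref{f336}) at $m=3$ pins down $a=\tfrac53$, hence $\bar\alpha=-\tfrac23$. Finally, for each surviving triple $(m,\alpha,\bar\alpha)$ I would read off $g$ from the displayed closed form, recovering the explicit polynomials in (1)(i)--(iv) and (2)(i)--(iii), and verify directly that they solve (\ref{f336}).

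The main obstacle is precisely this substitution-and-bookkeeping step: plugging the explicit $g$ back into (\ref{f336}) produces an unwieldy three-variable identity, and the art lies in choosing the specializations --- $\mu=0$, $\partial=0$, $\mu=-\lambda$, $\lambda=\mu=1$, and one genuinely $\partial$-dependent coefficient --- so that each returns a clean scalar constraint rather than a tautology. One must also keep $m\leq1$ and the branch $\bar\alpha=0$ as separate subcases, since several of these specializations trivialize there.
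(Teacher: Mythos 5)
The paper itself does not prove this proposition; it imports it verbatim from \cite{Ling-Yuan2} (Lemma 3.6), so your argument has to stand on its own. Its main line is correct, and I checked the computations: setting $\mu=0$ indeed forces the $\partial^m$-coefficient of $g$ to be nonzero, yields the closed form $g(\partial,\lambda)=\bigl((\partial+\bar{\alpha}\lambda)(\partial+\lambda)^m-(\partial+\alpha\lambda)\partial^m\bigr)/\bigl((a-1)\lambda\bigr)$ together with $\alpha-\bar{\alpha}=m+1-a$; for $\bar{\alpha}\neq0$ the specialization $\partial=0$ gives exactly your symmetric identity in $\lambda,\mu$, and the three evaluations ($\mu=-\lambda$, the $\lambda^m\mu^2$-coefficient, $\lambda=\mu=1$) do exclude all $m\geq4$ and force $\alpha=1$ at $m=2$ and $\alpha=\frac{5}{3}$ at $m=3$. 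The step you only assert, that a $\partial$-dependent coefficient pins down $a$ at $m=3$, is also true: comparing the coefficients of $\partial\lambda^2\mu$ in (\ref{f336}) gives $(1+3\bar{\alpha})(2a-3)=3(1+\bar{\alpha})(1+2\bar{\alpha})$, which together with $\bar{\alpha}=a-\frac{7}{3}$ forces $a=\frac{5}{3}$ and hence $\bar{\alpha}=-\frac{2}{3}$.

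The genuine gap is the branch $\bar{\alpha}=0$, which you dismiss as ``treated in parallel''. It cannot be: in that branch the stated list is simply not the complete set of solutions of (\ref{f336}), so no choice of coefficient comparisons will reproduce it. Concretely, $g(\partial,\lambda)=\partial(\partial+\lambda)(\partial+\tfrac{1}{2}\lambda)$ satisfies (\ref{f336}) with $a=3$, $\alpha=1$, $\bar{\alpha}=0$, $m=3$ (the weight relation $\alpha-\bar{\alpha}=m+1-a$ holds), contradicting part (2) for $a=3$; the mechanism is general, since for $\bar{\alpha}=0$ one has $g(0,\lambda)=0$, hence $g=\partial h$ where $h$ solves the same equation with $\bar{\alpha}$ replaced by $1$, and every such $h$ manufactures an extra family. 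So the proposition must be read with the standing hypothesis $\bar{\alpha}\neq0$, which is automatic in this paper because $\bar{M}=M_{\bar{\alpha},\bar{\beta}}$ is irreducible wherever Proposition \ref{pro37} is applied; you should invoke that hypothesis and discard the $\bar{\alpha}=0$ branch rather than claim a parallel treatment of it. With that restriction, and with the asserted $m=3$ comparison and the final direct verifications of the listed families actually written out, your argument does give a complete proof.
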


Therefore, in Case 3, by Proposition \ref{pro37}, to solve (\ref{f333}), we only need to consider the following subcases.

\textbf{Subcase 3.1.} $m=0$.

By Proposition \ref{pro37} and (\ref{f333}),  we can obtain that $\alpha-\bar{\alpha}=1-a$ and $g(\partial,\lambda)=1$.

\textbf{Subcase 3.2.} $m=1$.

By Proposition \ref{pro37}, we can obtain that $\alpha-\bar{\alpha}=2-a$. Assume that $g(\partial,\lambda)=\partial-\frac{1}{1-a}\bar{\alpha}\lambda+a_{00}$. Plugging this into (\ref{f333}) and using undetermined coefficient method, we can obtain that $g(\partial,\lambda)=\partial-\frac{1}{1-a}\bar{\alpha}\lambda+\frac{1}{1-a}\bar{\alpha}b+\bar{\beta}$.

\textbf{Subcase 3.3.} $m=2$.

By Proposition \ref{pro37}, we can obtain that $\alpha=1, \bar{\alpha}=a-2$. Assume that $g(\partial,\lambda)=\partial^2-\frac{1}{1-a}(1+2\bar{\alpha})\partial\lambda-\frac{1}{1-a}\bar{\alpha}\lambda^2+a_{10}\partial+a_{11}\lambda+a_{00}$. Plugging this into (\ref{f333}) and using undetermined coefficient method, we can obtain that $a_{10}=2\bar{\beta}+\frac{1}{1-a}(1+2\bar{\alpha})b$, $a_{11}=\frac{2b}{1-a}\bar{\alpha}-\frac{1}{1-a}(1+2\bar{\alpha})\bar{\beta}$, and $a_{00}=\bar{\beta}^2+b\bar{\beta}\frac{1}{1-a}(1+2\bar{\alpha})-b^2\frac{1}{1-a}\bar{\alpha}$.

\textbf{Subcase 3.4.}  $m=3$.

By Proposition \ref{pro37}, we can obtain that $\alpha=a=\frac{5}{3}, \bar{\alpha}=-\frac{2}{3}$. Assume that $g(\partial,\lambda)=\partial^3+\frac{3}{2}\partial^2\lambda-\frac{3}{2}\partial\lambda^2-\lambda^3+ a_{20}\partial^2+ a_{21}\partial\lambda+a_{22}\lambda^2+a_{10}\partial+a_{11}\lambda+a_{00}$. Plugging this into (\ref{f333}) and using undetermined coefficient method, we can obtain that $a_{20}=3\bar{\beta}-\frac{3}{2}b$, $a_{21}=3\bar{\beta}+3b$, $a_{22}=-\frac{3}{2}\bar{\beta}+3b$, $a_{10}=3\bar{\beta}^2-3b\bar{\beta}-\frac{3}{2}b^2$, $a_{11}=\frac{3}{2}\bar{\beta}^2+3b\bar{\beta}-3b^2$, $a_{00}=\bar{\beta}^3-\frac{3}{2}b\bar{\beta}^2-\frac{3}{2}b^2\bar{\beta}+b^3$.

\textbf{Case 4.} $\beta-\bar{\beta} \neq0$, $\beta-\bar{\beta}+b=0$, $a=1$.

Similar to Case 3, we still have $f(\partial,\mu) =\frac{1}{\beta-\bar{\beta}}((\partial+\alpha\mu+\beta)f(\partial,0)-(\partial+\bar{\alpha}\mu+\bar{\beta})f(\partial+\mu,0))$. Thus $g(\partial,\lambda)\neq 0$. Otherwise, it corresponds to the trivial extension by Lemma \ref{l36}(1). In fact, we can take a shift to let $f(\partial,\mu)=0$ by Lemma \ref{l36}(1). Plugging $a=1$ into (\ref{f333}) gives
\begin{align}
(-\mu+b)g(\partial,\lambda+\mu) &=(\partial+\bar{\alpha}\lambda+\bar{\beta})g(\partial+\lambda,\mu)-(\partial+\alpha\lambda+\mu+\beta)g(\partial,\mu).\label{f337}
\end{align}

If $g(\partial,\lambda)=\sum_{n=0}^{m}\sum_{i=0}^{n}a_{ni}\partial^{n-i}\lambda^i$ is the solution of (\ref{f337}), where $a_{ni} \in \mathbb{C}$ and $m$ is the highest degree of $g(\partial,\lambda)$, then $\sum_{i=0}^{m}a_{mi}\partial^{m-i}\lambda^i$ is the solution of the following homogeneous equation:
\begin{align}
-\mu g(\partial,\lambda+\mu) &=(\partial+\bar{\alpha}\lambda)g(\partial+\lambda,\mu)-(\partial+\alpha\lambda+\mu)g(\partial,\mu).\label{f338}
\end{align}

Setting $\mu=0$ in (\ref{f338}) gives
\begin{align}
(\partial+\bar{\alpha}\lambda)g(\partial+\lambda,0)-(\partial+\alpha\lambda)g(\partial,0)=0.\label{f339}
\end{align}
Obviously, we can obtain that $g(\partial,0)=k$ with $k \in\mathbb{C}$. Therefore, we can obtain that $a_{m0}=0$. If $m\geq 3$, dividing $\mu$ and comparing the coefficients of $\partial^{m-1}\lambda$, $\partial^{m-2}\lambda^2$, $\partial^{m-2}\lambda\mu$, $\partial\lambda^2\mu^{m-3}$, $\lambda\mu^{m-1}$,
$\lambda^i\mu^{m-i}$ $(i\geq 2)$ in (\ref{f338}), respectively, we obtain that
\begin{align}
&a_{m1}=0 \quad or \quad a_{m1}\neq 0 \ and \  \alpha-\bar{\alpha}=m, \label{mf331}\\
&a_{m1}[\bar{\alpha}(m-1)+\frac{(m-1)(m-2)}{2}]=-a_{m2},\label{mf332}\\
&a_{m2}(\bar{\alpha}-\alpha+m)=0,\label{mf333}\\
&(1+2\bar{\alpha})a_{m,m-2}=-\frac{(m-1)(m-2)}{2}a_{m,m-1},\label{mf334}\\
&a_{mm}(\bar{\alpha}-\alpha+m)=0,\label{mf335}\\
&-a_{mm}\tbinom{m}{i}=a_{m,m-i+1}\bar{\alpha}, i=2,3,...,m.\label{mf336}
\end{align}
By (\ref{mf336}), if $a_{m1}=0$, then $a_{mi}=0$, $i=1,2,3,...,m$, i.e., $\sum_{i=0}^{m}a_{mi}\partial^{m-i}\lambda^i=0$, a contradiction. Thus, $a_{m1}\neq0$ and $a_{mi}\neq0$, $i=2,3,...,m$ by (\ref{mf336}). By (\ref{mf332}), (\ref{mf334}) and (\ref{mf336}), we obtain that $m^2-m+2=-2\bar{\alpha}(m-1)$ and $(1+2\bar{\alpha})\tbinom{m}{3}=-\tbinom{m}{2}\tbinom{m-1}{2}$. It follows that $m=3$ and $\bar{\alpha}=-2$. Thus, $m \le 3$.

\textbf{Subcase 4.1.} $m=0$.

Assume that $g(\partial,\lambda)=a_{00}$ with $a_{00}$ is nonzero complex number. Plugging this into (\ref{f337}) and using undetermined coefficient method, we can obtain that $\alpha-\bar{\alpha}=0$ and $g(\partial,\lambda)=1$.

\textbf{Subcase 4.2.} $m=1$.

Assume that $g(\partial,\lambda)=a_{11}\lambda+a_{00}$. Plugging this into (\ref{f337}) and using undetermined coefficient method, we can obtain that $\alpha-\bar{\alpha}=1$ and $g(\partial,\lambda)=\lambda-b$.

\textbf{Subcase 4.3.} $m=2$.

Assume that $g(\partial,\lambda)=a_{21}\partial\lambda+a_{22}\lambda^2+a_{10}\partial+a_{11}\lambda+a_{00}$. Plugging this into (\ref{f337}) and using undetermined coefficient method, we can obtain that $\alpha-\bar{\alpha}=2$ and $g(\partial,\lambda)=\partial\lambda-\bar{\alpha}\lambda^2-b\partial+(\bar{\beta}+2b\bar{\alpha})\lambda-(b\bar{\beta}+b^2\bar{\alpha})$.

\textbf{Subcase 4.4.} $m=3$.

According the above discussion, we obtain that $a_{30}=0$, $\alpha=1$ and $\bar{\alpha}=-2$. Assume that $g(\partial,\lambda)=a_{31}\partial^2\lambda+a_{32}\partial\lambda^2+a_{33}\lambda^3+a_{20}\partial^2+a_{21}\partial\lambda+a_{22}\lambda^2+a_{10}\partial+a_{11}\lambda+a_{00}$. Plugging this into (\ref{f337}) and using undetermined coefficient method, we can obtain that $g(\partial,\lambda)=\partial^2\lambda+3\partial\lambda^2+2\lambda^3-b\partial^2+(2\bar{\beta}-6b)\partial\lambda+(3\bar{\beta}-6b)\lambda^2+(-2\bar{\beta}b+3b^2)\partial+(\bar{\beta}^2-6b\bar{\beta}+6b^2)\lambda-\bar{\beta}^2b+3b^2\bar{\beta}-2b^3$.


\textbf{Case 5.} $\beta-\bar{\beta} =0$, $b=0$.

It reduces to the case of $\mathcal{W}(a,0)$ conformal algebra (Similar to the case of $\mathcal{W}(1-a)$ conformal algebra in \cite{Ling-Yuan2} Theorem 3.7). By Theorem \ref{t3} and Proposition \ref{pro37}, we obtain the following.
 \begin{theorem}\label{t38}(Ref. \cite{Ling-Yuan2}, Theorem 3.7)
 Nontrivial extensions of finite irreducible $\mathcal{W}(a,0)$-modules of the form (\ref{W3}) with $a\neq1$ exist if and only if $\beta=\bar{\beta}$. For each $\beta \in \mathbb{C}$, these extensions are given (up to equivalence) by \ref{W31}, where $g(\partial,\lambda)=0$ and $f(\partial,\lambda)$ is from the nonzero polynomials of Theorem \ref{t3}, with $\alpha, \bar{\alpha}\neq0$, or the values of $\alpha$ and $\bar{\alpha}$ along with the pairs of polynomials $g(\partial,\lambda)$ and $f(\partial,\lambda)$, whose nonzero scalar multiples give rise to nontrivial extensions, are listed as follows (by replacing $\partial$ by $\partial+\beta$):\\
 (1) When $a=3$, we have $\alpha=\bar{\alpha}=1$, $f(\partial,\lambda)=a_0+a_1\lambda$ and $g(\partial,\lambda)=\partial^2+\frac{3}{2}\partial\lambda+\frac{1}{2}\lambda^2$, where $a_0, a_1 \in \mathbb{C}$.\\
 (2) When $a=2$, we have $\alpha-\bar{\alpha}=-1$ or $0$. Moreover,
 \begin{enumerate}[(i)]
 \item In the case $\alpha-\bar{\alpha}=-1$, $f(\partial,\lambda)=0$ and $g(\partial,\lambda)=1$.
 \item In the case $\alpha-\bar{\alpha}=0$, $f(\partial,\lambda)=a_0+a_1\lambda$ and $g(\partial,\lambda)=\partial+\bar{\alpha}\lambda$, where $a_0, a_1 \in \mathbb{C}$.
 \end{enumerate}
(3) When $a=0$, we have $\alpha-\bar{\alpha}=1, 2$ or $\alpha=1, \bar{\alpha}=-2$. Moreover,
 \begin{enumerate}[(i)]
 \item In the case $\alpha-\bar{\alpha}=1$, $f(\partial,\lambda)=0$ and $g(\partial,\lambda)=1$.
 \item In the case $\alpha-\bar{\alpha}=2$, $f(\partial,\lambda)=a_0\lambda^2(2\partial+\lambda)$ and $g(\partial,\lambda)=\partial-\bar{\alpha}\lambda$, where $a_0 \in \mathbb{C}$.
 \item In the case $\alpha=1, \bar{\alpha}=-2$, $f(\partial,\lambda)=a_0\partial\lambda^2(\partial+\lambda)$ and $g(\partial,\lambda)=\partial^2+3\partial\lambda+2\lambda^2$, where $a_0 \in \mathbb{C}$.
 \end{enumerate}
(4) When $a=-1$, we have $\alpha-\bar{\alpha}=2, 3$ or $\alpha=1, \bar{\alpha}=-3$. Moreover,
 \begin{enumerate}[(i)]
 \item In the case $\alpha-\bar{\alpha}=2$, $f(\partial,\lambda)=a_0\lambda^2(2\partial+\lambda)$ and $g(\partial,\lambda)=1$, where $a_0 \in \mathbb{C}$.
 \item In the case $\alpha-\bar{\alpha}=3$, $f(\partial,\lambda)=a_0\partial\lambda^2(\partial+\lambda)$ and $g(\partial,\lambda)=\partial-\frac{1}{2}\bar{\alpha}\lambda$, where $a_0 \in \mathbb{C}$.
 \item In the case $\alpha=1, \bar{\alpha}=-3$, $f(\partial,\lambda)=a_0\lambda^2(4\partial^3+6\partial^2\lambda-\partial\lambda^2-3\lambda^3)$ and $g(\partial,\lambda)=\partial^2+\frac{5}{2}\partial\lambda+\frac{3}{2}\lambda^2$, where $a_0 \in \mathbb{C}$.
 \end{enumerate}
 (5) When $a=-2$, we have $\alpha-\bar{\alpha}=3, 4$ or $\alpha=1, \bar{\alpha}=-4$. Moreover,
 \begin{enumerate}[(i)]
 \item In the case $\alpha-\bar{\alpha}=3$, $f(\partial,\lambda)=a_0\partial\lambda^2(\partial+\lambda)$ and $g(\partial,\lambda)=1$, where $a_0 \in \mathbb{C}$.
 \item In the case $\alpha-\bar{\alpha}=4$, $f(\partial,\lambda)=a_0\lambda^2(4\partial^3+6\partial^2\lambda-\partial\lambda^2+\bar{\alpha}\lambda^3)$ and $g(\partial,\lambda)=\partial-\frac{1}{3}\bar{\alpha}\lambda$, where $a_0 \in \mathbb{C}$.
 \item In the case $\alpha=1, \bar{\alpha}=-4$, $f(\partial,\lambda)=a_0(\partial^4\lambda^2-10\partial^2\lambda^4-17\partial\lambda^5-8\lambda^6)$ and $g(\partial,\lambda)=\partial^2+\frac{7}{3}\partial\lambda+\frac{4}{3}\lambda^2$, where $a_0 \in \mathbb{C}$.
 \end{enumerate}
  (6) When $a=-3$, we have $\alpha-\bar{\alpha}=4, 5$ or $\alpha=1, \bar{\alpha}=-5$. Moreover,
 \begin{enumerate}[(i)]
 \item In the case $\alpha-\bar{\alpha}=4$, $f(\partial,\lambda)=a_0\lambda^2(4\partial^3+6\partial^2\lambda-\partial\lambda^2+\bar{\alpha}\lambda^3)$ and $g(\partial,\lambda)=1$, where $a_0 \in \mathbb{C}$.
 \item In the case $\alpha-\bar{\alpha}=5, \alpha\neq1$, $f(\partial,\lambda)=0$ and $g(\partial,\lambda)=\partial-\frac{1}{4}\bar{\alpha}\lambda$.
 \item In the case $\alpha=1, \bar{\alpha}=-4$, $f(\partial,\lambda)=a_0(\partial^4\lambda^2-10\partial^2\lambda^4-17\partial\lambda^5-8\lambda^6)$ and $g(\partial,\lambda)=\partial+\lambda$, where $a_0 \in \mathbb{C}$.
 \item In the case $\alpha=1, \bar{\alpha}=-5$, $f(\partial,\lambda)=0$ and $g(\partial,\lambda)=\partial^2+\frac{9}{4}\partial\lambda+\frac{5}{4}\lambda^2$.
 \end{enumerate}
 (7) When $a=-4$, we have $\alpha-\bar{\alpha}=5, 6$ or $\alpha=1, \bar{\alpha}=-6$. Moreover,
 \begin{enumerate}[(i)]
 \item In the case $\alpha-\bar{\alpha}=5, \alpha\neq1$, $f(\partial,\lambda)=0$ and $g(\partial,\lambda)=1$.
 \item In the case $\alpha=1, \bar{\alpha}=-4$, $f(\partial,\lambda)=a_0(\partial^4\lambda^2-10\partial^2\lambda^4-17\partial\lambda^5-8\lambda^6)$ and $g(\partial,\lambda)=1$, where $a_0 \in \mathbb{C}$.
 \item In the case $\alpha-\bar{\alpha}=6, \alpha \neq \frac{7}{2}\pm\frac{\sqrt{19}}{2}$, $f(\partial,\lambda)=0$ and $g(\partial,\lambda)=\partial-\frac{1}{5}\bar{\alpha}\lambda$.
  \item In the case $\alpha-\bar{\alpha}=6, \alpha = \frac{7}{2}\pm\frac{\sqrt{19}}{2}$, $f(\partial,\lambda)=a_0(\partial^4\lambda^3-(2\bar{\alpha}+3)\partial^3\lambda^4-3\bar{\alpha}\partial^2\lambda^5-(3\bar{\alpha}+1)\partial\lambda^6-(\bar{\alpha}+\frac{9}{28})\lambda^7)$ and $g(\partial,\lambda)=\partial-\frac{1}{5}\bar{\alpha}\lambda$, where $a_0 \in \mathbb{C}$.
 \item In the case $\alpha=1, \bar{\alpha}=-6$, $f(\partial,\lambda)=0$ and $g(\partial,\lambda)=\partial^2+\frac{11}{5}\partial\lambda+\frac{6}{5}\lambda^2$.
 \end{enumerate}
 (8) When $a=-5$, we have $\alpha-\bar{\alpha}=6, 7$ or $\alpha=1, \bar{\alpha}=-7$. Moreover,
 \begin{enumerate}[(i)]
  \item In the case $\alpha-\bar{\alpha}=6, \alpha \neq \frac{7}{2}\pm\frac{\sqrt{19}}{2}$, $f(\partial,\lambda)=0$ and $g(\partial,\lambda)=1$.
  \item In the case $\alpha-\bar{\alpha}=6, \alpha = \frac{7}{2}\pm\frac{\sqrt{19}}{2}$, $f(\partial,\lambda)=a_0(\partial^4\lambda^3-(2\bar{\alpha}+3)\partial^3\lambda^4-3\bar{\alpha}\partial^2\lambda^5-(3\bar{\alpha}+1)\partial\lambda^6-(\bar{\alpha}+\frac{9}{28})\lambda^7)$ and $g(\partial,\lambda)=1$, where $a_0 \in \mathbb{C}$.
  \item In the case $\alpha-\bar{\alpha}=7$, $f(\partial,\lambda)=0$ and $g(\partial,\lambda)=\partial-\frac{1}{6}\bar{\alpha}\lambda$.
   \item In the case $\alpha=1, \bar{\alpha}=-7$, $f(\partial,\lambda)=0$ and $g(\partial,\lambda)=\partial^2+\frac{13}{6}\partial\lambda+\frac{7}{6}\lambda^2$.
 \end{enumerate}
(9) When $a=\frac{5}{3}$, we have $f(\partial,\lambda)=0$ and the values $\alpha$ and $\bar{\alpha}$ along with $g(\partial,\lambda)$ are from Proposition \ref{pro37}(1).\\
(10) When $a\neq 3, 2, 0, -1, -2, -3, -4, -5$ or $\frac{5}{3}$, we have $f(\partial,\lambda)=0$ and the values $\alpha$ and $\bar{\alpha}$ along with $g(\partial,\lambda)$ are from Proposition \ref{pro37}(2).

 \end{theorem}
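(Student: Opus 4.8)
The plan is to exploit the fact that, when $b=0$ and $\beta=\bar\beta$, the two cocycle equations decouple completely, so that each can be solved by a result already at hand. The ``only if'' part is immediate: with $b=0$, the case $\beta\neq\bar\beta$ falls under Case 1 above, where only trivial extensions occur, so we may assume $\beta=\bar\beta$. After the shift $\partial\mapsto\partial+\beta$, equation (\ref{f332}) for $f$ becomes verbatim the Virasoro $L$-cocycle equation of type (\ref{V3}) with $\beta=\bar\beta=0$, and equation (\ref{f333}) for $g$ becomes verbatim the homogeneous equation (\ref{f336}); furthermore $[W_\lambda W]=0$ from (\ref{f34}) imposes no condition, since $W$ acts trivially on the submodule $\bar M=M_{\bar\alpha,\bar\beta}$. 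Hence, after the shift, a cocycle of the form (\ref{W31}) is precisely a pair $(f,g)$ in which $f$ is a Virasoro $L$-cocycle and $g$ solves (\ref{f336}), and the final answer is obtained by unwinding the shift (``replacing $\partial$ by $\partial+\beta$'').

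First I would solve for $g$. Because $b=0$, equation (\ref{f336}) is homogeneous for the grading $\deg\partial=\deg\lambda=\deg\mu=1$, so every homogeneous component of a solution is again a solution; thus it suffices to take $g$ homogeneous of some degree $m$, and Proposition \ref{pro37} then lists all such $g$, along with the constraint $\alpha-\bar\alpha=m+1-a$, the bound $m\le3$, and the separate behaviour at $a=\tfrac{5}{3}$. (This is exactly where $b=0$ simplifies matters: in Cases 3 and 4 the analogue of (\ref{f336}) is inhomogeneous, which forces one to track lower-degree correction terms, whereas here none arise.) In parallel I would solve for $f$ simply by invoking Theorem \ref{t3}, keeping the $\bar\alpha=0$ piece as in the Remark; in particular $f\neq0$ forces $\alpha-\bar\alpha\in\{0,1,2,3,4,5,6\}$. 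Finally, Lemma \ref{l36}(1) identifies the coboundaries: they all have $g=0$ with $f$ a Virasoro coboundary. Consequently a nonzero $g$ is rigid — distinct scalar classes of $g$ give inequivalent extensions — and for a fixed $g$ the freedom in $f$ is exactly the Virasoro $\mathrm{Ext}$ for the given $\alpha,\bar\alpha$.

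Assembling these facts gives the theorem. When $g=0$ the extension is simply a Virasoro extension, given by the nonzero polynomials of Theorem \ref{t3} subject to $\alpha,\bar\alpha\neq0$ (required for irreducibility of $M$ and $\bar M$). When $g\neq0$, Proposition \ref{pro37} forces a precise relation $\alpha-\bar\alpha=m+1-a$ (or exact values of $\alpha,\bar\alpha$ when $m=2$); this relation lies in the Virasoro window $\{0,1,2,3,4,5,6\}$ only for the sporadic values $a\in\{3,2,0,-1,-2,-3,-4,-5\}$ (and for $a=\tfrac{5}{3}$ never, so there $f=0$), in which cases Theorem \ref{t3} dictates which $f$ may accompany that $g$; for every other $a$ we necessarily have $f=0$. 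Carrying this out for each $m\in\{0,1,2,3\}$ and each admissible $a$ produces the table (1)--(10). I expect the only real difficulty to be this bookkeeping: for each value of $a$, matching the value of $\alpha-\bar\alpha$ forced by Proposition \ref{pro37} against the correct case of Theorem \ref{t3}, and handling with care the boundary situations — the coincidence $\alpha=1$, the values of $\bar\alpha$ that would render $M$ or $\bar M$ reducible, and the exceptional family $a=\tfrac{5}{3}$ — so that the resulting list is complete and free of repetition.
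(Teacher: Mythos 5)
Your proposal is correct and follows essentially the same route as the paper: in its Case 5 ($\beta=\bar\beta$, $b=0$) the paper likewise reduces the decoupled cocycle equations (\ref{f332}) and (\ref{f333}), after the shift by $\beta$, to the Virasoro equation handled by Theorem \ref{t3} and to the homogeneous equation (\ref{f336}) solved in Proposition \ref{pro37}, and then quotes the resulting table from \cite{Ling-Yuan2} (Theorem 3.7) rather than redoing the case-by-case bookkeeping. Your decoupling argument, the use of Lemma \ref{l36}(1) to see that a nonzero $g$ is never a coboundary, and the matching of $\alpha-\bar\alpha=m+1-a$ against the Virasoro window are exactly the ingredients the paper relies on.
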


Then nontrivial extensions of finite irreducible $\mathcal{W}(1,0)$-modules of the form (\ref{W3}) were classified by Yuan and Ling in Corollary 6.3 in \cite{Yuan-Ling}.

After the above discussion, we can draw the following theorem.
 \begin{theorem}\label{t39}
(A) If $(a,b)\neq (1,0)$, nontrivial extensions of finite irreducible $\mathcal{W}(a,b)$-modules of the form (\ref{W3}) exist. Moreover, they are given (up to equivalence) by (\ref{W31}). The values of $\alpha$ and $\bar{\alpha}$, $\beta$ and $\bar{\beta}$ along with the pairs of polynomials $g(\partial,\lambda)$ and $f(\partial,\lambda)$, whose nonzero scalar multiples give rise to nontrivial extensions, are listed as follows (by replacing $\partial$ by $\partial+\beta$ only in (1) and (4)):

(1) If $\beta-\bar{\beta} =0$, $b\neq0$, then $g(\partial,\lambda)=0$, $f(\partial,\lambda)$ is from the nonzero polynomials of Theorem \ref{t3} with $\alpha, \bar{\alpha} \neq 0$.

(2) If $\beta-\bar{\beta} \neq0$, $\beta-\bar{\beta}+b=0$, $a \neq1$, then $f(\partial,\lambda)=0$ and $g(\partial,\lambda)$ is as follows (where m is the highest degree of $g(\partial,\lambda)$):
\begin{enumerate}[(i)]
\item If $m=0$, then $\alpha-\bar{\alpha}=1-a$ and $g(\partial,\lambda)=1$.
\item If $m=1$, then $\alpha-\bar{\alpha}=2-a$ and $g(\partial,\lambda)=\partial-\frac{1}{1-a}\bar{\alpha}\lambda+\frac{1}{1-a}\bar{\alpha}b+\bar{\beta}$.
\item If $m=2$, then $\alpha=1, \bar{\alpha}=a-2$ and $g(\partial,\lambda)=\partial^2-\frac{1}{1-a}(1+2\bar{\alpha})\partial\lambda-\frac{1}{1-a}\bar{\alpha}\lambda^2+a_{10}\partial+a_{11}\lambda+a_{00}$, where $a_{10}=2\bar{\beta}+\frac{1}{1-a}(1+2\bar{\alpha})b$, $a_{11}=\frac{2b}{1-a}\bar{\alpha}-\frac{1}{1-a}(1+2\bar{\alpha})\bar{\beta}$, and $a_{00}=\bar{\beta}^2+b\bar{\beta}\frac{1}{1-a}(1+2\bar{\alpha})-b^2\frac{1}{1-a}\bar{\alpha}$.
\item If $m=3$, then $\alpha=a=\frac{5}{3}, \bar{\alpha}=-\frac{2}{3}$ and $g(\partial,\lambda)=\partial^3+\frac{3}{2}\partial^2\lambda-\frac{3}{2}\partial\lambda^2-\lambda^3+ a_{20}\partial^2+ a_{21}\partial\lambda+a_{22}\lambda^2+a_{10}\partial+a_{11}\lambda+a_{00}$, where $a_{20}=3\bar{\beta}-\frac{3}{2}b$, $a_{21}=3\bar{\beta}+3b$, $a_{22}=-\frac{3}{2}\bar{\beta}+3b$, $a_{10}=3\bar{\beta}^2-3b\bar{\beta}-\frac{3}{2}b^2$, $a_{11}=\frac{3}{2}\bar{\beta}^2+3b\bar{\beta}-3b^2$, $a_{00}=\bar{\beta}^3-\frac{3}{2}b\bar{\beta}^2-\frac{3}{2}b^2\bar{\beta}+b^3$.
\end{enumerate}

(3) If $\beta-\bar{\beta} \neq0$, $\beta-\bar{\beta}+b=0$, $a=1$, then $f(\partial,\lambda)=0$ and $g(\partial,\lambda)$ is as follows (where m is the highest degree of $g(\partial,\lambda)$):
\begin{enumerate}[(i)]
\item If $m=0$, then $\alpha-\bar{\alpha}=0$ and $g(\lambda)=1$.
\item If $m=1$, then $\alpha-\bar{\alpha}=1$ and $g(\lambda)=\lambda-b$.
\item If $m=2$, then $\alpha-\bar{\alpha}=2$ and $g(\partial,\lambda)=\partial\lambda-\bar{\alpha}\lambda^2-b\partial+(\bar{\beta}+2b\bar{\alpha})\lambda-(b\bar{\beta}+b^2\bar{\alpha})$.
\item If $m=3$, then $\alpha=1$, $\bar{\alpha}=-2$ and $g(\partial,\lambda)=\partial^2\lambda+3\partial\lambda^2+2\lambda^3-b\partial^2+(2\bar{\beta}-6b)\partial\lambda+(3\bar{\beta}-6b)\lambda^2+(-2\bar{\beta}b+3b^2)\partial+(\bar{\beta}^2-6b\bar{\beta}+6b^2)\lambda-\bar{\beta}^2b+3b^2\bar{\beta}-2b^3$.
\end{enumerate}

(4) If $\beta-\bar{\beta} =0$, $b=0$, then $f(\partial,\lambda)$ and $g(\partial,\lambda)$ satisfy the conclusions given in Theorem \ref{t38}.\\ \\
(B) If $(a,b)=(1,0)$, nontrivial extensions of finite irreducible $\mathcal{W}(1,0)$-modules of the form (\ref{W3}) exist if and only if $\gamma=\bar{\gamma}$, $\beta=\bar{\beta}$. Moreover, they are given (up to equivalence) by (\ref{W32}). The values of $\alpha$ and $\bar{\alpha}$, $\beta$ and $\bar{\beta}$, $\gamma$ and $\bar{\gamma}$ along with the pairs of polynomials $g(\partial,\lambda)$ and $f(\partial,\lambda)$, whose nonzero scalar multiples give rise to nontrivial extensions, are listed as follows (by replacing $\partial$ by $\partial+\beta$):\\
(1) If $\gamma =\bar{\gamma}=0$, then $f(\partial,\lambda)$ and $g(\partial,\lambda)$ are as follows:
\begin{enumerate}[(i)]
\item If $\alpha-\bar{\alpha}=0$, then $f(\partial,\lambda)=a_0+a_1\lambda$ and $g(\partial,\lambda)=b_0$ with $(a_0, a_1, b_0) \neq (0,0,0)$.
\item If $\alpha-\bar{\alpha}=1$, then $f(\partial,\lambda)=0$ and $g(\partial,\lambda)=b_1\lambda$ with $b_1\neq 0$.
\item If $\alpha-\bar{\alpha}=2$, then $f(\partial,\lambda)=a_3\lambda^2(2\partial+\lambda)$ and $g(\partial,\lambda)=b_2\lambda(\partial-\bar{\alpha}\lambda)$ with $(a_3, b_2) \neq (0,0)$.

\item If $(\alpha,\bar{\alpha})=(1,-2)$, then $f(\partial,\lambda)=a_4\partial\lambda^2(\partial+\lambda)$ and $g(\partial,\lambda)=b_3\lambda(\partial^2+3\partial\lambda+2\lambda^2)$ with $(a_4, b_3) \neq (0,0)$.
\item If $\alpha-\bar{\alpha}=3$ and $\bar{\alpha} \neq -2$, then $f(\partial,\lambda)=a_4\partial\lambda^2(\partial+\lambda)$ and $g(\partial,\lambda)=0$ with $a_4 \neq 0$.
\item If $\alpha-\bar{\alpha}=4$, then $f(\partial,\lambda)=a_5\lambda^2(4\partial^3+6\partial^2\lambda-\partial\lambda^2+\bar{\alpha}\lambda^3)$ and $g(\partial,\lambda)=0$ with $a_5 \neq 0$.

\item If $(\alpha,\bar{\alpha})=(1,-4)$, then $f(\partial,\lambda)=a_6(\partial^4\lambda^2-10\partial^2\lambda^4-17\partial\lambda^5-8\lambda^6)$ and $g(\partial,\lambda)=0$ with $a_6 \neq 0$.

 \item If $\alpha-\bar{\alpha}=6, \alpha = \frac{7}{2}\pm\frac{\sqrt{19}}{2}$, then $f(\partial,\lambda)=a_7(\partial^4\lambda^3-(2\bar{\alpha}+3)\partial^3\lambda^4-3\bar{\alpha}\partial^2\lambda^5-(3\bar{\alpha}+1)\partial\lambda^6-(\bar{\alpha}+\frac{9}{28})\lambda^7)$ and $g(\partial,\lambda)=0$ with $a_7 \neq 0$.

\end{enumerate}
(2) If $\gamma =\bar{\gamma} \neq0$, then $f(\partial,\lambda)$ and $g(\partial,\lambda)$ are as follows:
 \begin{enumerate}[(i)]

 \item If $\alpha=\bar{\alpha}$, then $f(\partial,\lambda)=a_0+a_1\lambda$ and $g(\partial,\lambda)=b_0$ with $(a_0, a_1, b_0)\neq (0,0,0)$.

 \item If $\alpha-\bar{\alpha}=1$, then $f(\partial,\lambda)=a_2\lambda^2$ and $g(\partial,\lambda)=b_1\lambda$ with $(a_2,b_1)\neq (0,0)$.

 \item If $\alpha-\bar{\alpha}=2$, then $f(\partial,\lambda)=\frac{b_2}{\beta}\partial\lambda^2+a_3\lambda^3$ and $g(\partial,\lambda)=b_2\lambda^2$ with $(b_2,a_3)\neq (0,0)$.
 \end{enumerate}
\end{theorem}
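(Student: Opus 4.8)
The plan is to read off Theorem~\ref{t39} from the case analysis already assembled above: an extension of the form (\ref{W3}) with $(a,b)\neq(1,0)$ is encoded by a pair of polynomials $(f(\partial,\lambda),g(\partial,\lambda))$ subject to the cocycle equations (\ref{f332}) and (\ref{f333}), and two such pairs give equivalent extensions precisely when they differ by a coboundary of the form described in Lemma~\ref{l36}(1). First I would specialize (\ref{f332}) and (\ref{f333}) at $\lambda=0$ to get (\ref{f334}) and (\ref{f335}); the vanishing or non-vanishing of the two scalar factors $\beta-\bar{\beta}$ and $\beta-\bar{\beta}+b$ appearing there dictates the partition into the five cases isolated in the text, and parts (A)(1)--(A)(4) of the theorem are exactly the conclusions of Cases~2--5 (Case~1 contributing nothing new, since there $g\equiv0$ and $f$ is forced into coboundary form).

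Next I would dispose of the ``Virasoro-type'' cases. In Case~2 ($\beta=\bar{\beta}$, $b\neq0$) equation (\ref{f335}) forces $g=0$, so the data collapses to a Virasoro extension and Theorem~\ref{t3} yields part~(A)(1), after the shift $\partial\mapsto\partial+\beta$ and after discarding the $\bar{\alpha}=0$ entries, which are unavailable here since $\bar{\alpha}\neq0$. In Case~5 ($\beta=\bar{\beta}$, $b=0$) the acting algebra is $\mathcal{W}(a,0)$, and the combined content of Theorem~\ref{t3} and Proposition~\ref{pro37} is precisely the list recorded in Theorem~\ref{t38}, which is part~(A)(4).

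The heart of the argument is Cases~3 and~4, where $\beta-\bar{\beta}\neq0$ and $\beta-\bar{\beta}+b=0$. Here (\ref{f334}) already presents $f(\partial,\lambda)$ in coboundary form, so by Lemma~\ref{l36}(1) one may normalize $f=0$; since $W$ acts trivially on $\bar{M}=M_{\bar{\alpha},\bar{\beta}}$, this normalization leaves $g$ unchanged, and the problem reduces to solving the single equation (\ref{f333}) (respectively (\ref{f337}) when $a=1$) for $g$. Writing $g$ as a polynomial of top degree $m$, its leading homogeneous component satisfies (\ref{f336}) (respectively (\ref{f338})). For $a\neq1$ Proposition~\ref{pro37} classifies these leading components and in particular gives $m\leq3$; for $a=1$ the bound $m\leq3$, together with the constraint $\bar{\alpha}=-2$ when $m=3$, must be derived directly by extracting enough coefficient relations from (\ref{f338}). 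For each admissible leading term I would substitute the ansatz $g=g_{\mathrm{top}}+(\text{lower-order terms})$ into (\ref{f333}) and solve the resulting linear system for the remaining coefficients by undetermined coefficients; this pins down $g$ uniquely up to a scalar and produces the explicit formulas in part~(A)(2) (Subcases~3.1--3.4) and part~(A)(3) (Subcases~4.1--4.4). Since a coboundary has $g\equiv0$ by Lemma~\ref{l36}(1), every nonzero $g$ obtained this way gives a genuinely nontrivial extension.

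The remaining part~(B), the case $(a,b)=(1,0)$, concerns the Heisenberg--Virasoro conformal algebra; here I would simply invoke the classification of Yuan and Ling (\cite{Yuan-Ling}, Corollary~6.3). The main obstacle is the computational core of Cases~3 and~4: not the identification of the leading terms (which is supplied by Proposition~\ref{pro37} or by a short coefficient comparison), but checking that the subleading coefficients forced by a truncated comparison actually satisfy the full equation (\ref{f333}), i.e.\ the consistency of an overdetermined linear system, and, in Case~4 with $a=1$, carrying out the coefficient extraction from (\ref{f338}) that establishes $m\leq3$ and $\bar{\alpha}=-2$ from scratch, since Proposition~\ref{pro37} is not available there.
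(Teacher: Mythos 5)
Your proposal follows essentially the same route as the paper: specializing (\ref{f332})--(\ref{f333}) at $\lambda=0$ to split into the five cases via $\beta-\bar{\beta}$ and $\beta-\bar{\beta}+b$, reducing Cases 2 and 5 to Theorem \ref{t3} and Theorem \ref{t38}, normalizing $f=0$ in Cases 3--4 and solving for $g$ via its leading homogeneous part (Proposition \ref{pro37} for $a\neq1$, direct coefficient extraction from (\ref{f338}) for $a=1$) plus undetermined coefficients, and citing Yuan--Ling for part (B). The computational steps you flag as the remaining work (consistency of the lower-order coefficients and the $m\le3$, $\bar{\alpha}=-2$ bound when $a=1$) are exactly the calculations the paper carries out, so the plan is correct and matches the paper's proof.
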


\section{Extensions of finite irreducible modules over $TSV(a,b)$ and $TSV(c)$}

In this section, we apply the methods and results in Section 3 to finite irreducible modules over Lie conformal algebras $TSV(a,b)$ and $TSV(c)$ and give all extensions of finite irreducible modules over them.

\begin{definition}\label{deff1} (Ref. \cite{Hong})
The Lie conformal algebra  $TSV(a,b)$ with two parameters $a,b \in\mathbb{C}$ is a free $\mathbb{C}[\partial]$-module generated by $L$, $Y$ and $M$ and satisfies
 \begin{align*}
 [L_\lambda L]&=(\partial+2\lambda)L,\qquad [L_\lambda Y]=(\partial+a\lambda+b)Y,\\
[L_\lambda M]=(\partial+&2(a-1)\lambda+2b)M,\qquad [Y_\lambda Y]=(\partial+2\lambda)M,\\
[Y_\lambda M]&=[M_\lambda M]=0.
 \end{align*}
 The Lie conformal algebra  $TSV(c)$ with a parameter $c \in\mathbb{C}$ is a free $\mathbb{C}[\partial]$-module generated by $L$, $Y$ and $M$ and satisfies
 \begin{align*}
  [L_\lambda L]&=(\partial+2\lambda)L,\qquad [L_\lambda Y]=(\partial+\frac{3}{2}\lambda+c)Y,\\
[L_\lambda M]=&(\partial+2c)M,\qquad [Y_\lambda Y]=(\partial+2\lambda)(-\partial-2c)M,\\
[Y_\lambda M]&=[M_\lambda M]=0.
 \end{align*}
 \end{definition}
Note that $\mathbb{C}[\partial]M$ is an abelian ideal of both Lie conformal algebras $TSV(a,b)$ and $TSV(c)$. Obviously, we have $TSV(a,b)/\mathbb{C}[\partial]M\cong \mathcal{W}(a,b)$ and $TSV(c)/\mathbb{C}[\partial]M\cong \mathcal{W}(\frac{3}{2},c)$. All finite nontrivial conformal modules over the Lie conformal algebra $TSV(a,b)$ and $TSV(c)$ were classified in \cite{Luo-Hong-Wu}, and the corresponding results are given by the following theorem.

\begin{theorem}\label{t42}(Ref. \cite{Luo-Hong-Wu}, Theorem 4.11)
(1) Any finite nontrivial irreducible $TSV(a,b)$-module $M$ is free of rank one over $\mathbb{C}[\partial]$. Moreover,
\begin{enumerate}[(i)]
\item If $(a,b)\neq (1,0)$,
  \begin{equation*}
  \xymatrix{M\cong M_{\alpha,\beta}=\mathbb{C}[\partial]v,\  L_\lambda v=(\partial+\alpha\lambda+\beta)v,\  Y_\lambda v=M_\lambda v=0,}
   \end{equation*}
with $\alpha,\beta \in \mathbb{C}$  and  $\alpha \neq 0$.

\item If $(a,b)=(1,0)$,
 \begin{equation*}
  \xymatrix{M\cong M_{\alpha,\beta,\gamma}=\mathbb{C}[\partial]v,\  L_\lambda v=(\partial+\alpha\lambda+\beta)v,\  Y_\lambda v=\gamma v,\  M_\lambda v=0,}
 \end{equation*}
with $\alpha,\beta,\gamma \in \mathbb{C}$ and $(\alpha,\gamma) \neq (0,0)$.
\end{enumerate}
(2) Any finite nontrivial irreducible $TSV(c)$-module $M$ is free of rank one over $\mathbb{C}[\partial]$. Moreover,
  \begin{equation*}
  \xymatrix{M\cong M_{\alpha,\beta}=\mathbb{C}[\partial]v,\  L_\lambda v=(\partial+\alpha\lambda+\beta)v,\  Y_\lambda v=M_\lambda v=0,}
   \end{equation*}
with $\alpha,\beta \in \mathbb{C}$  and  $\alpha \neq 0$.
\end{theorem}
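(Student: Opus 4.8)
The plan is to reduce both classifications to the already-established classification of finite irreducible $\mathcal{W}(a,b)$-modules (Theorem~\ref{t31}), exploiting the isomorphisms $TSV(a,b)/\mathbb{C}[\partial]M\cong\mathcal{W}(a,b)$ and $TSV(c)/\mathbb{C}[\partial]M\cong\mathcal{W}(\tfrac32,c)$. Let $V$ be a finite nontrivial irreducible $TSV(a,b)$-module (the $TSV(c)$ case being parallel). First I would prove that $V$ is free over $\mathbb{C}[\partial]$. Consider the torsion submodule $\operatorname{Tor}(V)=\{v\in V:p(\partial)v=0\text{ for some nonzero }p\in\mathbb{C}[\partial]\}$. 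If $p(\partial)v=0$, then conformal sesquilinearity gives $p(\partial+\lambda)(x_\lambda v)=x_\lambda(p(\partial)v)=0$ for every $x\in TSV(a,b)$, and comparing top $\lambda$-degrees (the leading coefficient of $p(\partial+\lambda)$ in $\lambda$ is a nonzero constant) forces $x_\lambda v=0$. Hence $\operatorname{Tor}(V)$ is a conformal submodule contained in the invariants $V^{0}$; since $V$ is irreducible and nontrivial, $\operatorname{Tor}(V)=0$, and being finitely generated over the PID $\mathbb{C}[\partial]$, $V$ is free of some finite rank $n\geq1$.

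The heart of the proof is to show that the abelian ideal $\mathbb{C}[\partial]M$ acts trivially, i.e. $M_\lambda V=0$. Put $K=\{v\in V:M_\mu v=0\text{ for all }\mu\}$. Because $\mathbb{C}[\partial]M$ is an abelian ideal with $[L_\lambda M]\in\mathbb{C}[\partial]M[\lambda]$ and $[Y_\lambda M]=[M_\lambda M]=0$, the module identity $M_\mu(x_\lambda v)=x_\lambda(M_\mu v)-[x_\lambda M]_{\lambda+\mu}v$ immediately shows that $K$ is a conformal submodule, so by irreducibility $K=0$ or $K=V$. It remains to exclude $K=0$. If $K=0$, then $M$ does not act as zero, so the $\mathbb{C}[\partial]$-span of all coefficients of all $M_\mu v$ is a nonzero conformal submodule, hence all of $V$. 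To reach a contradiction I would restrict to $Vir=\mathbb{C}[\partial]L$: a finite $Vir$-module has finite length (\cite{CK}), so $V$ contains a minimal nonzero $Vir$-submodule, which by Proposition~\ref{pr1} and freeness of $V$ must be a free rank-one module $\mathbb{C}[\partial]v_0$ with $L_\mu v_0=(\partial+\alpha_0\mu+\beta_0)v_0$ and $\alpha_0\neq0$. Using $[L_\mu M]=(\partial+2(a-1)\mu+2b)M$ one obtains
\begin{equation*}
L_\mu(M_\lambda v_0)=(\partial+\lambda+\alpha_0\mu+\beta_0)(M_\lambda v_0)+\bigl((2a-3)\mu-\lambda+2b\bigr)M_{\mu+\lambda}v_0,
\end{equation*}
and $[Y_\lambda Y]=(\partial+2\lambda)M$ gives $Y_\lambda(Y_\mu v_0)-Y_\mu(Y_\lambda v_0)=(\lambda-\mu)M_{\lambda+\mu}v_0$, while $[Y_\mu M]=[M_\mu M]=0$ give $Y_\mu(M_\lambda v_0)=M_\lambda(Y_\mu v_0)$ and $M_\mu(M_\lambda v_0)=M_\lambda(M_\mu v_0)$. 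A degree-and-weight analysis of these relations (tracking the $\lambda$-degree and leading coefficient of $M_\lambda v_0$, which transform predictably under multiplication by polynomials in $\partial$) forces $M_\lambda v_0=0$, contradicting $K=0$. Therefore $K=V$, i.e. $M_\lambda V=0$.

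Once $M_\lambda V=0$, the $TSV(a,b)$-action on $V$ factors through $TSV(a,b)/\mathbb{C}[\partial]M\cong\mathcal{W}(a,b)$; the resulting $\mathcal{W}(a,b)$-module is still finite, irreducible (the $\mathbb{C}[\partial]$-submodules invariant under $TSV(a,b)$ and under $\mathcal{W}(a,b)$ coincide) and nontrivial (triviality would force $L,Y$ and $M$ to act as zero on $V$). Theorem~\ref{t31} then gives precisely the stated list: for $(a,b)\neq(1,0)$ we get $V\cong M_{\alpha,\beta}=\mathbb{C}[\partial]v$ with $L_\lambda v=(\partial+\alpha\lambda+\beta)v$, $W_\lambda v=0$ and $\alpha\neq0$, that is $Y_\lambda v=M_\lambda v=0$; for $(a,b)=(1,0)$ we get $V\cong M_{\alpha,\beta,\gamma}$ with $Y_\lambda v=\gamma v$, $M_\lambda v=0$ and $(\alpha,\gamma)\neq(0,0)$. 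For $TSV(c)$ the same argument produces a finite nontrivial irreducible $\mathcal{W}(\tfrac32,c)$-module; since $(\tfrac32,c)\neq(1,0)$, the first case of Theorem~\ref{t31} applies and yields $V\cong M_{\alpha,\beta}$ with $Y_\lambda v=M_\lambda v=0$ and $\alpha\neq0$. In every case $V$ is free of rank one, as claimed. I expect the exclusion of $K=0$ in the second step---showing that $\mathbb{C}[\partial]M$ cannot act nontrivially on a finite irreducible module---to be the main obstacle; the rest is either standard (freeness) or a direct application of Theorem~\ref{t31}.
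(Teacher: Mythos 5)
The paper itself offers no proof of Theorem \ref{t42}: it is quoted verbatim from \cite{Luo-Hong-Wu} (Theorem 4.11), so there is no internal argument to compare with. Your outer strategy is certainly the natural one and matches the high-level logic of the cited source: the torsion/freeness argument in your first step is correct, and once one knows $M_\lambda V=0$ the passage to $TSV(a,b)/\mathbb{C}[\partial]M\cong\mathcal{W}(a,b)$ (resp.\ $\mathcal{W}(\tfrac32,c)$) and the application of Theorem \ref{t31} are routine, as is the observation that $K=\{v\in V: M_\mu v=0\}$ is a conformal submodule.

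The genuine gap is exactly the step you yourself call the heart: ruling out $K=0$. First, the auxiliary claim that a finite $Vir$-module has finite length, hence contains a minimal nonzero $Vir$-submodule of the form $\mathbb{C}[\partial]v_0$ with $\alpha_0\neq 0$, is unjustified and in general false: a free rank-one module with trivial $L$-action has the strictly descending chain of submodules $p(\partial)\mathbb{C}[\partial]v\supsetneq p(\partial)^2\mathbb{C}[\partial]v\supsetneq\cdots$ and no irreducible submodule at all, and even after checking that $Vir$ acts nontrivially on $V$ it does not follow that the restriction of $V$ to $Vir$ contains an irreducible submodule, let alone a nontrivial one as in Proposition \ref{pr1}. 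Second, even granting such a $v_0$, the decisive assertion that a ``degree-and-weight analysis forces $M_\lambda v_0=0$'' is not carried out and is not obviously true as stated: $M_\lambda v_0$ lies in $V[\lambda]$, not in $\mathbb{C}[\partial]v_0[\lambda]$, so you are tracking a polynomial whose coefficients sit in a module whose $Vir$-structure you do not control, and the listed relations alone do not visibly force vanishing. (The real reason $M$ must die is that it is a $\lambda$-bracket of $Y$ with itself, via $[Y_\lambda Y]=(\partial+2\lambda)M$, resp.\ $(\partial+2\lambda)(-\partial-2c)M$ -- this must enter essentially, since for $\mathcal{W}(1,0)$ the abelian ideal $\mathbb{C}[\partial]W$ does act nontrivially.) This is precisely the hard content of the cited theorem, and the known route is the reverse of yours: one first proves that $V$ is free of rank one using the extended annihilation algebra techniques of \cite{CK}, and only then kills the action $M_\lambda v=h(\partial,\lambda)v$, $Y_\lambda v=g(\partial,\lambda)v$ by an explicit rank-one computation with the relations above, which is far more tractable than your arbitrary-rank argument. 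As written, your middle step is a placeholder rather than a proof, so the proposal does not yet establish the theorem.
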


Denote the module $M$ in Theorem \ref{t42}(1) by $M_{\alpha,\beta}$ (respectively, $M_{\alpha,\beta,\gamma}$) if $(a,b)\neq (1,0)$ (respectively, $(a,b)=(1,0)$). Denote the module $M$ in Theorem \ref{t42}(2) by $M_{\alpha,\beta}$.

By Definition \ref{Def22}, a $TSV(a,b)$-module structure on $M$ is given by $L_{\lambda}, Y_{\lambda}, M_{\lambda} \in End_{\mathbb{C}}(M)[\lambda]$ such that
\begin{align}
&[L_{\lambda},L_{\mu}]=(\lambda-\mu)L_{\lambda+\mu}, \label{f41}\\
&[L_{\lambda},Y_{\mu}]=((a-1)\lambda-\mu+b)Y_{\lambda+\mu}, \label{f42}\\
&[L_{\lambda},M_{\mu}]=((2a-3)\lambda-\mu+2b)M_{\lambda+\mu}, \label{f43}\\
&[Y_{\lambda},Y_{\mu}]=(\lambda-\mu)M_{\lambda+\mu}, \label{f44}\\
&[\partial,L_{\lambda}]=-\lambda L_{\lambda}, \label{f45}\\
&[\partial,Y_{\lambda}]=-\lambda Y_{\lambda}, \label{f46}\\
&[\partial,M_{\lambda}]=-\lambda M_{\lambda}, \label{f47}\\
&[Y_{\lambda},M_{\mu}]=0, \label{f48}\\
&[M_{\lambda},M_{\mu}]=0. \label{f49}
\end{align}
A $TSV(c)$-module structure on $M$ is given by $L_{\lambda}, Y_{\lambda}, M_{\lambda} \in End_{\mathbb{C}}(M)[\lambda]$ such that
\begin{align}
&[L_{\lambda},L_{\mu}]=(\lambda-\mu)L_{\lambda+\mu}, \label{f51}\\
&[L_{\lambda},Y_{\mu}]=(\frac{1}{2}\lambda-\mu+c)Y_{\lambda+\mu}, \label{f52}\\
&[L_{\lambda},M_{\mu}]=(-\lambda-\mu+2c)M_{\lambda+\mu}, \label{f53}\\
&[Y_{\lambda},Y_{\mu}]=(\lambda-\mu)(\lambda+\mu-2c)M_{\lambda+\mu}, \label{f54}\\
&[\partial,L_{\lambda}]=-\lambda L_{\lambda}, \label{f55}\\
&[\partial,Y_{\lambda}]=-\lambda Y_{\lambda}, \label{f56}\\
&[\partial,M_{\lambda}]=-\lambda M_{\lambda}, \label{f57}\\
&[Y_{\lambda},M_{\mu}]=0, \label{f58}\\
&[M_{\lambda},M_{\mu}]=0. \label{f59}
\end{align}

First, we consider extensions of finite irreducible modules over $TSV(a,b)$ and $TSV(c)$ of the form
\begin{align}
0\longrightarrow \mathbb{C}c_{\eta} \longrightarrow  E\longrightarrow  M \longrightarrow   0\label{T1}
\end{align}
As before, $E$ as a $\mathbb{C}[\partial]$-module in (\ref{T1}) is isomorphic to $\mathbb{C}c_{\eta}\oplus M$, where $\mathbb{C}c_{\eta}$ is a $TSV(a,b)$ (resp. $TSV(c)$)-submodule, and $M=\mathbb{C}[\partial]v_{\alpha}$ such that the following identities hold in $E$:\\
(1) In the $TSV(a,b)$ case,
\begin{enumerate}[(i)]
\item If $(a,b)\neq (1,0)$,
  \begin{equation}
  L_\lambda v_{\alpha}=(\partial+\alpha\lambda+\beta)v_{\alpha}+f(\lambda)c_{\eta},\quad Y_\lambda v_{\alpha}=g(\lambda)c_{\eta},\quad M_\lambda v_{\alpha}=h(\lambda)c_{\eta};\label{TAB11}
   \end{equation}
  \item If $(a,b)=(1,0)$,
 \begin{equation}
  L_\lambda v_{\alpha}=(\partial+\alpha\lambda+\beta)v_{\alpha}+f(\lambda)c_{\eta},\quad Y_\lambda v_{\alpha}=\gamma v_{\alpha}+g(\lambda)c_{\eta},\quad M_\lambda v_{\alpha}=h(\lambda)c_{\eta};\label{TAB12}
  \end{equation}
\end{enumerate}
where $f(\lambda), g(\lambda), h(\lambda) \in \mathbb{C}[\lambda]$.\\
(2) In the $TSV(c)$ case,
  \begin{equation}
  L_\lambda v_{\alpha}=(\partial+\alpha\lambda+\beta)v_{\alpha}+f(\lambda)c_{\eta},\quad Y_\lambda v_{\alpha}=g(\lambda)c_{\eta},\quad M_\lambda v_{\alpha}=h(\lambda)c_{\eta},\label{TC11}
   \end{equation}
where $f(\lambda), g(\lambda), h(\lambda) \in \mathbb{C}[\lambda]$.

\begin{lemma}\label{l43}
(1) All trivial extensions of finite irreducible $TSV(a,b)$-modules of the form (\ref{T1}) are given by (\ref{TAB11}) and (\ref{TAB12}), and
\begin{enumerate}[(i)]
\item If $(a,b)\neq (1,0)$, $f(\lambda)$ is a scalar multiple of $\alpha\lambda+\beta+\eta$ and $g(\lambda)=h(\lambda)=0$.
\item If $(a,b)=(1,0)$, $f(\lambda)$ and $g(\lambda)$ are the same scalar multiple of $\alpha\lambda+\beta+\eta$ and $\gamma$, respectively, and $h(\lambda)=0$.
\end{enumerate}
(2) All trivial extensions of finite irreducible $TSV(c)$-modules of the form (\ref{T1}) are given by (\ref{TC11}), where $f(\lambda)$ is a scalar multiple of $\alpha\lambda+\beta+\eta$ and $g(\lambda)=h(\lambda)=0$.
\end{lemma}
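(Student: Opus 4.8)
The plan is to characterize trivial extensions by exhibiting the splitting map explicitly, exactly as in Lemma \ref{l32} and Lemma \ref{l34}. Since a trivial extension $0\to\mathbb{C}c_\eta\to E\to M\to 0$ is by definition equivalent to the direct-sum extension, there must exist a lifted generator $v_\alpha'=\varphi(\partial)v_\alpha+kc_\eta\in E$ with $0\neq\varphi(\partial)\in\mathbb{C}[\partial]$ and $k\in\mathbb{C}$ that generates a complement $\mathbb{C}[\partial]$-submodule isomorphic to $M$; in particular $v_\alpha'$ must satisfy the same action formulas as the original generator of $M$, namely $L_\lambda v_\alpha'=(\partial+\alpha\lambda+\beta)v_\alpha'$, $Y_\lambda v_\alpha'=0$ (resp. $\gamma v_\alpha'$ in the $(1,0)$ case), and $M_\lambda v_\alpha'=0$.

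First I would compute $L_\lambda v_\alpha'$, $Y_\lambda v_\alpha'$ and $M_\lambda v_\alpha'$ in two ways: once using the defining formulas (\ref{TAB11}) (resp.\ (\ref{TAB12}), (\ref{TC11})) together with conformal sesquilinearity $a_\lambda(\varphi(\partial)u)=\varphi(\partial+\lambda)(a_\lambda u)$, and once using the requirement that $v_\alpha'$ generate a standard copy of $M_{\alpha,\beta}$ (resp. $M_{\alpha,\beta,\gamma}$). Equating the two expressions for $M_\lambda v_\alpha'$ gives $h(\lambda)\varphi(\partial+\lambda)c_\eta=0$, forcing $h(\lambda)=0$; equating the two expressions for $Y_\lambda v_\alpha'$ gives $g(\lambda)\varphi(\partial+\lambda)c_\eta=0$ in the $(a,b)\neq(1,0)$ and $TSV(c)$ cases, forcing $g(\lambda)=0$, while in the $(a,b)=(1,0)$ case one gets $\gamma\varphi(\partial)v_\alpha+g(\lambda)\varphi(\partial+\lambda)c_\eta$ on one side and $\gamma\varphi(\partial)v_\alpha+k\gamma c_\eta$ on the other, so $g(\lambda)\varphi(\partial+\lambda)=k\gamma$. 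Equating the two expressions for $L_\lambda v_\alpha'$ shows $\varphi(\partial)=(\partial+\alpha\lambda+\beta)^{-1}\cdot(\partial+\alpha\lambda+\beta)\varphi(\partial+\lambda)$ forces $\varphi(\partial+\lambda)=\varphi(\partial)$ for all $\lambda$, hence $\varphi$ is a nonzero constant; then $f(\lambda)\varphi=k(\alpha\lambda+\beta+\eta)$, so $f(\lambda)$ is a scalar multiple of $\alpha\lambda+\beta+\eta$, and in the $(1,0)$ case $g(\lambda)$ is the same scalar multiple of $\gamma$.

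Conversely, I would check that the formulas claimed in the statement do give rise to honest $TSV(a,b)$- (resp.\ $TSV(c)$-) module structures that split, i.e.\ that setting $v_\alpha'=\varphi v_\alpha+kc_\eta$ with the appropriate constants indeed produces a submodule isomorphic to $M$ and a decomposition $E\cong\mathbb{C}c_\eta\oplus\mathbb{C}[\partial]v_\alpha'$; this is a direct verification using (\ref{f41})--(\ref{f49}) (resp.\ (\ref{f51})--(\ref{f59})), and it is essentially the content of Lemma \ref{l43} being "if and only if." For part (2) in the $(a,b)=(1,0)$ subcase one also cites Corollary 6.1 in \cite{Yuan-Ling} to match conventions, just as Lemma \ref{l32} does.

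The argument is routine; the only mild subtlety — and the step I would be most careful about — is the bookkeeping in the $(a,b)=(1,0)$ case, where $Y$ acts by the nonzero scalar $\gamma$ on $M$ and one must track the coupled constraint $g(\lambda)\varphi(\partial+\lambda)=k\gamma$ simultaneously with $f(\lambda)\varphi=k(\alpha\lambda+\beta+\eta)$, so that $f$ and $g$ end up being the \emph{same} scalar multiple of $\alpha\lambda+\beta+\eta$ and $\gamma$ respectively rather than independent multiples. The presence of the extra generator $M$ adds no real difficulty: since $M_\lambda$ acts trivially on $M$ and $M_\lambda v_\alpha=h(\lambda)c_\eta$ lands in the trivial submodule, the constraint $h(\lambda)=0$ drops out immediately from the splitting condition $M_\lambda v_\alpha'=0$.
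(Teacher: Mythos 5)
Your proposal is correct and follows essentially the same route as the paper, which proves this lemma by the very same splitting-generator comparison used for Lemma \ref{l32} and Lemma \ref{l34} (writing $v_\alpha'=\varphi(\partial)v_\alpha+kc_\eta$, comparing both computations of $L_\lambda v_\alpha'$, $Y_\lambda v_\alpha'$, $M_\lambda v_\alpha'$, with the extra generator forcing $h(\lambda)=0$ exactly as you note). The only cosmetic slip is that in the $(1,0)$ case the action side should read $\gamma\varphi(\partial+\lambda)v_\alpha+g(\lambda)\varphi(\partial+\lambda)c_\eta$ rather than $\gamma\varphi(\partial)v_\alpha+\cdots$, which is harmless since $\varphi$ has already been shown to be a nonzero constant by the $L_\lambda$ comparison.
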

\begin{proof}
Similar to the proof of Lemma \ref{l32}.
\end{proof}

\begin{theorem}\label{t44}
(1) In the $TSV(a,b)$ case,

\ (i) If $(a,b)\neq (1,0)$, nontrivial extensions of finite irreducible $TSV(a,b)$-modules of the form (\ref{T1}) exist if and only if $h(\lambda)=0$. Moreover, they are given (up to equivalence) by (\ref{TAB11}). The values of $\beta$ and $\eta$ along with the pairs of polynomials $g(\lambda)$ and $f(\lambda)$, whose nonzero scalar multiples give rise to nontrivial extensions, are listed as follows:

\begin{enumerate}[(a)]
\item if $g(\lambda)=0$, then $\alpha=1,2$, $\beta+\eta=0$ and $f(\lambda)$ is from the nonzero polynomials of Theorem \ref{t25};
\item if $a\neq1$, $b=0$ and $\beta+\eta =0$, then $g(\lambda)=k$ for some nonzero complex number $k$, $\alpha=1-a$, and

$$f(\lambda)=
\begin{cases}
c_2{\lambda}^2,  &\alpha=1,
\cr c_3\lambda^3,  &\alpha=2,
\cr 0,  &otherwise,
\end{cases}$$
with $c_2, c_3 \in \mathbb{C}$;

\item if $a\neq1$, $b+\beta+\eta=0$ and $\beta+\eta \neq0$, then $g(\lambda)=k$ for some nonzero complex number $k$, $\alpha=1-a$, and $f(\lambda)=0$;
\item if $a=1, b\neq0$ and $b+\beta+\eta=0$, then $g(\lambda)=k(1-\frac{1}{b}\lambda)$ for some nonzero complex number $k$, $\alpha=1$, and $f(\lambda)=0$.
\end{enumerate}

(ii) If $(a,b)=(1,0)$, nontrivial extensions of finite irreducible $TSV(1,0)$-modules of the form (\ref{T1}) exist if and only if $\beta+\eta=0$, $\gamma=0$ and $h(\lambda)=0$. Moreover, they are given (up to equivalence) by (\ref{TAB12}), where, if $g(\lambda)=0$, then $\alpha=1,2$ and $f(\lambda)$ is from the nonzero polynomials of Theorem \ref{t25}, or else $g(\lambda)=k\lambda$ for some nonzero complex number $k$, $\alpha=1$ and $f(\lambda)=c_2{\lambda}^2$ with $c_2 \in \mathbb{C}$.

(2) In the $TSV(c)$ case, nontrivial extensions of finite irreducible $TSV(c)$-modules of the form (\ref{T1}) exist if and only if $h(\lambda)=0$. Moreover, they are given (up to equivalence) by (\ref{TC11}). The values of $\beta$ and $\eta$ along with the pairs of polynomials $g(\lambda)$ and $f(\lambda)$, whose nonzero scalar multiples give rise to nontrivial extensions, are listed as follows:

\begin{enumerate}[(a)]
\item if $g(\lambda)=0$, then $\alpha=1,2$, $\beta+\eta=0$ and $f(\lambda)$ is from the nonzero polynomials of Theorem \ref{t25};
\item if $c+\beta+\eta=0$, then $g(\lambda)=k$ for some nonzero complex number $k$, $\alpha=-\frac{1}{2}$ and $f(\lambda)=0$.

\end{enumerate}

\end{theorem}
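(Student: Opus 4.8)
The plan is to mimic the proof of Theorem \ref{t33} almost verbatim, exploiting the fact that $TSV(a,b)$ (resp. $TSV(c)$) contains the subalgebra generated by $L$ and $Y$, which is isomorphic to $\mathcal{W}(a,b)$ (resp. $\mathcal{W}(\frac{3}{2},c)$), together with the extra generator $M$. First I would apply both sides of the module axioms to $v_{\alpha}$ to obtain the defining equations for the cocycles $f(\lambda)$, $g(\lambda)$, $h(\lambda)$. Applying \eqref{f41} and \eqref{f42} (resp. \eqref{f51} and \eqref{f52}) to $v_{\alpha}$ gives exactly the same equations \eqref{f310}, \eqref{f311} for $f$ and $g$ as in the $\mathcal{W}(a,b)$ case (with $a$ replaced by $\frac{3}{2}$ in the $TSV(c)$ case, so $a-1=\frac{1}{2}$, $b$ replaced by $c$). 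The genuinely new input comes from \eqref{f43} and \eqref{f44} (resp. \eqref{f53}, \eqref{f54}): applying \eqref{f43} to $v_{\alpha}$ yields
\begin{align*}
((2a-3)\lambda-\mu+2b)h(\lambda+\mu)=-(\alpha\lambda+\mu+\beta+\eta)h(\mu),
\end{align*}
and applying \eqref{f44} yields $(\lambda-\mu)h(\lambda+\mu)=0$, forcing $h=0$ immediately. (In the $TSV(c)$ case \eqref{f54} gives $(\lambda-\mu)(\lambda+\mu-2c)h(\lambda+\mu)=0$, so again $h=0$.)

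Once $h(\lambda)=0$ is established, the triple of relations collapses: the equation coming from $[Y_\lambda, Y_\mu]$ is satisfied trivially, the one from $[L_\lambda,M_\mu]$ is satisfied trivially, and we are left with precisely \eqref{f310} and \eqref{f311}, i.e. exactly the system solved in the proof of Theorem \ref{t33}. So I would invoke Theorem \ref{t33}(1)(i)--(iv) directly for the $TSV(a,b)$, $(a,b)\neq(1,0)$ case, and Theorem \ref{t33}(2) for $(a,b)=(1,0)$, together with the trivial-extension description from Lemma \ref{l43} (which replaces Lemma \ref{l32}). For $TSV(c)$ the same reduction applies with $a=\frac{3}{2}$, $b=c$: since $a\neq 1$, only branches (ii) and (iii) of Theorem \ref{t33}(1) survive. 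Branch (ii) requires $b=0$, i.e. $c=0$, and $\alpha=1-a=-\frac{1}{2}$; but then $\alpha=-\frac12$ is neither $1$ nor $2$, so $f(\lambda)=0$. Branch (iii) requires $b+\beta+\eta=0$, i.e. $c+\beta+\eta=0$, with $\alpha=1-a=-\frac12$ and $f(\lambda)=0$. These two possibilities merge into the single statement: $c+\beta+\eta=0$, $\alpha=-\frac12$, $g(\lambda)=k\neq0$, $f(\lambda)=0$ — which is exactly part (2)(b). Part (2)(a) with $g=0$ is just the Virasoro case via Theorem \ref{t25}.

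The only point requiring care — and the step I expect to be mildly the main obstacle — is checking the consistency/compatibility conditions: having solved the $L$--$Y$ subsystem one must verify that the resulting cocycle still satisfies all the remaining axioms of $TSV$, in particular that no new constraint is imposed by relations not used so far (e.g. \eqref{f45}--\eqref{f49}, which are the $\partial$-sesquilinearity and the vanishing brackets $[Y_\lambda,M_\mu]$, $[M_\lambda,M_\mu]$). Since $h=0$ and $\mathbb{C}c_\eta$ is annihilated by $M$, these are all automatically satisfied, so no extra work is needed; but this verification must be stated. I would also remark that the passage from cocycles to equivalence classes uses Lemma \ref{l43} exactly as Lemma \ref{l32} was used in Theorem \ref{t33}: a shift $v'_\alpha=\varphi(\partial)v_\alpha+kc_\eta$ forces $\varphi$ constant and $g(\lambda)=0$, so the listed $g\neq0$ cocycles are genuinely nontrivial, and the listed $f$'s are determined up to the trivial cocycle $\alpha\lambda+\beta+\eta$. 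This completes the proof.
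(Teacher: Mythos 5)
Your overall route is the same as the paper's: kill $h$ using the brackets involving $M$, then reduce to the $\mathcal{W}(a,b)$ (resp.\ $\mathcal{W}(\frac32,c)$) situation and quote Theorem \ref{t33} together with the triviality description of Lemma \ref{l43}. For the cases $(a,b)\neq(1,0)$ and for $TSV(c)$ this is exactly what the paper does (your explicit check of which branches of Theorem \ref{t33}(1) survive for $a=\frac32$, $b=c$ is even a bit more detailed than the paper's one-line reduction), and your closing remark about the remaining axioms \eqref{f45}--\eqref{f49} being automatic once $h=0$ is fine.

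There is, however, one concrete slip in the case $(a,b)=(1,0)$. There $Y_\lambda v_\alpha=\gamma v_\alpha+g(\lambda)c_\eta$ by \eqref{TAB12}, so applying \eqref{f44} to $v_\alpha$ does \emph{not} give $(\lambda-\mu)h(\lambda+\mu)=0$ as you assert; it gives
\begin{align*}
\gamma\bigl(g(\lambda)-g(\mu)\bigr)=(\lambda-\mu)h(\lambda+\mu),
\end{align*}
which by itself does not force $h=0$ when $\gamma\neq 0$ (and you must treat $\gamma\neq0$, since the ``only if $\gamma=0$'' part of statement (1)(ii) is a conclusion, not a hypothesis). The gap is easily closed: applying \eqref{f48} to $v_\alpha$ yields $\gamma h(\mu)=0$, so if $\gamma\neq0$ then $h=0$, while if $\gamma=0$ the displayed equation reduces to $(\lambda-\mu)h(\lambda+\mu)=0$ and again $h=0$. (The paper does essentially this, deriving \eqref{f423}--\eqref{f425} from \eqref{f43}, \eqref{f44}, \eqref{f48} and obtaining a contradiction from $h\neq0$.) With that one-line correction, the rest of your argument goes through and coincides with the paper's proof.
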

\begin{proof}
(1) (i) Applying both sides of (\ref{f44}) to $v_{\alpha}$, we obtain $(\lambda-\mu)h(\lambda+\mu)=0$. Thus, $h(\lambda)=0$. It reduces to the case of $\mathcal{W}(a,b)$. We obtain the result by Theorem \ref{t33}(1).

(ii) Applying both sides of (\ref{f43}), (\ref{f44}) and (\ref{f48}) to $v_{\alpha}$ gives
\begin{align}
&(\alpha\lambda+\mu+\beta+\eta)h(\mu)=(\lambda+\mu)h(\lambda+\mu), \label{f423} \\
&\gamma(g(\lambda)-g(\mu))=(\lambda-\mu)h(\lambda+\mu), \label{f424} \\
&\gamma h(\mu)=0. \label{f425}
\end{align}
Obviously, we can deduce that $h(\lambda)=0$. Otherwise, we obtain that $h(\lambda)=k$ by (\ref{f423}), where $k$ is a nonzero complex number. By (\ref{f424}) and (\ref{f425}), we obtain a contradiction. Thus, $h(\lambda)=0$ and it reduces to the case of $\mathcal{W}(a,b)$. We obtain the result by Theorem \ref{t33}(2).\\
(2) Similar to the proof of (1)(i), we can deduce that $h(\lambda)=0$. It reduces to the case of $\mathcal{W}(\frac{3}{2},c)$. We obtain the result by Theorem \ref{t33}(1).

This completes the proof.
\end{proof}
Next, we consider extensions of finite irreducible modules over $TSV(a,b)$ and $TSV(c)$ of the form
\begin{align}
0\longrightarrow M \longrightarrow  E\longrightarrow  \mathbb{C}c_{\eta} \longrightarrow   0\label{T2}
\end{align}
As before, $E$ as a vector space in (\ref{T2}) is isomorphic to $ M\oplus \mathbb{C}c_{\eta}$, where $M$ is a $TSV(a,b)$(resp. $TSV(c)$)-submodule, and $M=\mathbb{C}[\partial]v_{\alpha}$ such that the following identities hold in $E$:
\begin{equation}
  L_\lambda c_{\eta}=f(\partial,\lambda)v_{\alpha},\quad Y_\lambda c_{\eta}=g(\partial,\lambda)v_{\alpha},\quad M_\lambda c_{\eta}=h(\partial,\lambda)v_{\alpha},\quad \partial c_{\eta}=\eta c_{\eta} +p(\partial)v_{\alpha}, \label{TSV2}
 \end{equation}
 where $f(\partial,\lambda), g(\partial,\lambda), h(\partial,\lambda) \in \mathbb{C}[\partial,\lambda]$ and $p(\partial) \in \mathbb{C}[\partial]$.

\begin{lemma}\label{l45}
(1) All trivial extensions of finite irreducible $TSV(a,b)$-modules of the form (\ref{T2}) are given by (\ref{TSV2}), and
\begin{enumerate}[(i)]
\item If $(a,b)\neq (1,0)$, $f(\partial,\lambda)=(\partial+\alpha\lambda+\beta)\phi(\partial+\lambda)$, $g(\partial,\lambda)=h(\partial,\lambda)=0$ and $p(\partial)=(\partial-\eta)\phi(\partial)$, where $\phi$ is a polynomial.
\item If $(a,b)=(1,0)$, $f(\partial,\lambda)=(\partial+\alpha\lambda+\beta)\phi(\partial+\lambda)$, $g(\partial,\lambda)=\gamma \phi(\partial+\lambda)$, $h(\partial,\lambda)=0$ and $p(\partial)=(\partial-\eta)\phi(\partial)$, where $\phi$ is a polynomial.
\end{enumerate}
(2) All trivial extensions of finite irreducible $TSV(c)$-modules of the form (\ref{T2}) are given by (\ref{TSV2}), where $f(\partial,\lambda)=(\partial+\alpha\lambda+\beta)\phi(\partial+\lambda)$, $g(\partial,\lambda)=h(\partial,\lambda)=0$ and $p(\partial)=(\partial-\eta)\phi(\partial)$, where $\phi$ is a polynomial.
\end{lemma}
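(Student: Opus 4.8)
The plan is to mimic the proof of Lemma \ref{l32} (and, for the parts labeled ``see \cite{Yuan-Ling}'', to refer to the analogous corollaries there), adding only the bookkeeping needed to handle the extra generator. First I would assume that the extension (\ref{T2}) is trivial, so that there exists $c_{\eta}'=kc_{\eta}+\phi(\partial)v_{\alpha}\in E$ with $0\neq k\in\mathbb{C}$ and $\phi(\partial)\in\mathbb{C}[\partial]$ satisfying $L_{\lambda}c_{\eta}'=Y_{\lambda}c_{\eta}'=M_{\lambda}c_{\eta}'=0$ and $\partial c_{\eta}'=\eta c_{\eta}'$. This is just the statement that the section splitting the exact sequence sends the generator $c_{\eta}$ to an invariant vector of the right $\partial$-eigenvalue.

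Next I would expand each of the four defining relations of (\ref{TSV2}) applied to $c_{\eta}'$, using conformal sesquilinearity to push $\phi(\partial)$ through the actions: one gets
\begin{align*}
L_{\lambda}c_{\eta}'&=(\partial+\alpha\lambda+\beta)\phi(\partial+\lambda)v_{\alpha}+kf(\partial,\lambda)v_{\alpha},\\
Y_{\lambda}c_{\eta}'&=\varepsilon\,\gamma\,\phi(\partial+\lambda)v_{\alpha}+kg(\partial,\lambda)v_{\alpha},\\
M_{\lambda}c_{\eta}'&=kh(\partial,\lambda)v_{\alpha},\\
\partial c_{\eta}'&=k\eta c_{\eta}+\bigl(kp(\partial)+\partial\phi(\partial)\bigr)v_{\alpha},
\end{align*}
where $\varepsilon=1$ in the $(a,b)=(1,0)$ case (so that $Y_{\lambda}v_{\alpha}=\gamma v_{\alpha}$) and $\varepsilon=0$ otherwise, and in the $TSV(c)$ case the $Y$-action on $v_{\alpha}$ vanishes entirely. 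Setting each of these equal to zero (respectively to $\eta c_{\eta}'$) and comparing coefficients gives $h(\partial,\lambda)=0$ immediately from the $M$-relation, $g(\partial,\lambda)=0$ (resp. $g(\partial,\lambda)=\gamma\phi(\partial+\lambda)$ after the sign is absorbed into $k$) from the $Y$-relation, $f(\partial,\lambda)=-\tfrac{1}{k}(\partial+\alpha\lambda+\beta)\phi(\partial+\lambda)$ from the $L$-relation, and $p(\partial)=-\tfrac{1}{k}(\partial-\eta)\phi(\partial)$ from the $\partial$-relation; renaming $-\phi/k$ as $\phi$ yields exactly the stated forms. Conversely, any $f,g,h,p$ of these forms arise from such a $c_{\eta}'$, so they are precisely the trivial cocycles. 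The $TSV(c)$ statement is identical with the $Y$-action on both $v_{\alpha}$ and the invariant vector being zero, forcing $g=0$ as well.

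The only genuinely new point compared to Lemma \ref{l32}/Lemma \ref{l34} is that there are now three module maps ($L_{\lambda}$, $Y_{\lambda}$, $M_{\lambda}$) plus the $\partial$-action to track simultaneously, and one must be careful that the abelian ideal generator $M$ contributes the relation $M_{\lambda}c_{\eta}'=kh(\partial,\lambda)v_{\alpha}=0$ with no $\phi$-term (since $M_{\lambda}v_{\alpha}=0$ on the module $M_{\alpha,\beta}$), which is what pins down $h=0$; this is the main ``obstacle,'' though it is really just careful bookkeeping rather than a difficulty. For the $(a,b)=(1,0)$ subcase I would simply cite Corollary 6.2 in \cite{Yuan-Ling} as the paper already does elsewhere, noting that the $h=0$ conclusion is obtained by the same comparison of the $M$-relation. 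Thus the proof is a routine adaptation of the Virasoro computation, and I would write it as ``Similar to the proof of Lemma \ref{l34}; the extra relation $M_{\lambda}c_{\eta}'=0$ forces $h(\partial,\lambda)=0$,'' together with the citation for the $(1,0)$ case.
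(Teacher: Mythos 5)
Your proposal is correct and follows essentially the same route as the paper: the paper's proof of this lemma is literally ``Similar to the proof of Lemma \ref{l34}'', i.e.\ take an invariant vector $c_{\eta}'=kc_{\eta}+\phi(\partial)v_{\alpha}$ with $k\neq 0$, expand the $L_{\lambda}$, $Y_{\lambda}$, $M_{\lambda}$ and $\partial$ actions via (\ref{TSV2}), and compare, with the relation $M_{\lambda}c_{\eta}'=kh(\partial,\lambda)v_{\alpha}=0$ forcing $h=0$ and the $Y$-relation giving $g=0$ (resp.\ $g=\gamma\phi(\partial+\lambda)$ in the $(a,b)=(1,0)$ case), exactly as you describe.
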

\begin{proof}
Similar to the proof of Lemma \ref{l34}.
\end{proof}

\begin{theorem}\label{t46}
(1) In the $TSV(a,b)$ case,

\ (i) If $(a,b)\neq (1,0)$, nontrivial extensions of finite irreducible $TSV(a,b)$-modules of the form (\ref{T2}) exist if and only if $\beta+\eta=0$ and $\alpha=1$. In this case, $dim Ext(\mathbb{C}c_{-\beta}, M_{1,\beta})=1$, and the unique (up to equivalence) nontrivial extension is given by
\begin{align*}
L_\lambda c_{\eta}=kv_{\alpha},\quad Y_\lambda c_{\eta}=M_\lambda c_{\eta}=0,\quad \partial c_{\eta}=\eta c_{\eta}+kv_{\alpha},
\end{align*}
where $k$ is a nonzero complex number.

\ (ii) If $(a,b)=(1,0)$, nontrivial extensions of finite irreducible $TSV(1,0)$-modules of the form (\ref{T2}) exist if and only if $\beta+\eta=0$ and $(\alpha,\gamma)=(1,0)$. In this case, $dim Ext(\mathbb{C}c_{-\beta}, M_{1,\beta,0})=1$, and the unique (up to equivalence) nontrivial extension is given by
\begin{align*}
L_\lambda c_{\eta}=kv_{\alpha},\quad Y_\lambda c_{\eta}=M_\lambda c_{\eta}=0,\quad \partial c_{\eta}=\eta c_{\eta}+kv_{\alpha},
\end{align*}
where $k$ is a nonzero complex number.\\
(2) In the $TSV(c)$ case, nontrivial extensions of finite irreducible $TSV(c)$-modules of the form (\ref{T2}) exist if and only if $\beta+\eta=0$ and $\alpha=1$. In this case, $dim Ext(\mathbb{C}c_{-\beta}, M_{1,\beta})=1$, and the unique (up to equivalence) nontrivial extension is given by
\begin{align*}
L_\lambda c_{\eta}=kv_{\alpha},\quad Y_\lambda c_{\eta}=M_\lambda c_{\eta}=0,\quad \partial c_{\eta}=\eta c_{\eta}+kv_{\alpha},
\end{align*}
where $k$ is a nonzero complex number.
\end{theorem}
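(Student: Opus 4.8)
The plan is to reuse, almost verbatim, the three-step strategy of Theorem \ref{t35}: first use the abelian ideal $\mathbb{C}[\partial]M$ to kill the extra unknowns, then pass to the quotient conformal algebra, and finally quote the corresponding $\mathcal{W}$-result. Concretely, I would start from the general form (\ref{TSV2}) of the $E$-structure, so that $L_\lambda c_\eta=f(\partial,\lambda)v_\alpha$, $Y_\lambda c_\eta=g(\partial,\lambda)v_\alpha$, $M_\lambda c_\eta=h(\partial,\lambda)v_\alpha$ and $\partial c_\eta=\eta c_\eta+p(\partial)v_\alpha$, keeping in mind that on the submodule $M$ one has $M_\mu v_\alpha=0$ in all cases (Theorem \ref{t42}), while $Y_\mu v_\alpha=0$ or $\gamma v_\alpha$ depending on the parameters.

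The first step is to apply the sesquilinearity relation $[\partial,M_\lambda]=-\lambda M_\lambda$ (equation (\ref{f47}) for $TSV(a,b)$, equation (\ref{f57}) for $TSV(c)$) to $c_\eta$. Since $M_\lambda v_\alpha=0$, this collapses to $(\partial+\lambda-\eta)h(\partial,\lambda)=0$, forcing $h(\partial,\lambda)=0$; the same conclusion also drops out of the bracket $[Y_\lambda,Y_\mu]$ (equation (\ref{f44}), resp. (\ref{f54})) applied to $c_\eta$, and this derivation is insensitive to the scalar $\gamma$ and to which of $TSV(a,b)$, $TSV(c)$ we are in, because it involves only $M$ and $\partial$. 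Consequently $M_\lambda$ annihilates all of $E$, i.e. the ideal $\mathbb{C}[\partial]M$ acts trivially on $E$, so $E$ descends to a module over $TSV(a,b)/\mathbb{C}[\partial]M\cong\mathcal{W}(a,b)$ (resp. over $TSV(c)/\mathbb{C}[\partial]M\cong\mathcal{W}(\frac{3}{2},c)$), and the exact sequence (\ref{T2}) becomes an extension of the form (\ref{W2}) over that $\mathcal{W}$-algebra, carrying along the data $f$, $g$, $p$. One should also record that the remaining brackets involving $Y$ and $M$, namely $[L_\lambda,Y_\mu]$, $[L_\lambda,M_\mu]$ and $[Y_\lambda,M_\mu]$, applied to $c_\eta$ produce no new constraint once $h=0$, since each side then reduces to a multiple of $M_\mu v_\alpha=0$ or of $g(\partial,\lambda+\mu)$, which is already governed by the $\mathcal{W}$-equations (\ref{f319})--(\ref{f321}).

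It then remains only to invoke Theorem \ref{t35}. For $TSV(a,b)$ with $(a,b)\neq(1,0)$, and for $TSV(c)$ (whose quotient is $\mathcal{W}(\frac{3}{2},c)$ with $\frac{3}{2}\neq1$), Theorem \ref{t35}(1) applies directly: a nontrivial extension exists precisely when $\beta+\eta=0$ and $\alpha=1$, $\dim Ext=1$, and a representative is $L_\lambda c_\eta=kv_\alpha$, $\partial c_\eta=\eta c_\eta+kv_\alpha$, together with $Y_\lambda c_\eta=M_\lambda c_\eta=0$ because $g=h=0$. For $TSV(1,0)$ the quotient is $\mathcal{W}(1,0)$, and Theorem \ref{t35}(2) yields exactly the criterion $\beta+\eta=0$, $(\alpha,\gamma)=(1,0)$, with the same representative. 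Finally one compares with Lemma \ref{l45} to confirm that this representative is genuinely nontrivial.

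The only step requiring any computation is the reduction $h(\partial,\lambda)=0$, and even that is a one-line functional-equation argument; after it, the statement is a transparent translation of the already-established $\mathcal{W}$-classification, so I do not expect a real obstacle. The single point deserving a little care is making sure, in the $TSV(1,0)$ case, that the scalar action of $Y$ on $v_\alpha$ does not interfere with the vanishing of $h$ — which it does not, since $[\partial,M_\lambda]=-\lambda M_\lambda$ only couples $M$ with $\partial$.
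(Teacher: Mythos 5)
Your proposal is correct and follows essentially the same route as the paper: applying $[\partial,M_\lambda]=-\lambda M_\lambda$ to $c_\eta$ (using $M_\lambda v_\alpha=0$) yields $(\partial+\lambda-\eta)h(\partial,\lambda)=0$, hence $h=0$, after which everything reduces to the $\mathcal{W}(a,b)$ (resp.\ $\mathcal{W}(\frac{3}{2},c)$) case and Theorem \ref{t35} gives the classification. The only cosmetic caveat is your side remark that $h=0$ also follows from $[Y_\lambda,Y_\mu]$ applied to $c_\eta$, which is immediate only when $Y_\mu v_\alpha=0$ (for $TSV(1,0)$ with $\gamma\neq0$ it gives a weaker relation), but this does not affect your main argument.
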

\begin{proof}
Applying both sides of (\ref{f47}) to $c_{\eta}$ gives the following equations:
\begin{eqnarray}
&&(\partial+\lambda-\eta)h(\partial,\lambda)=0. \label{f428}
\end{eqnarray}
Obviously, $h(\partial,\lambda)=0$ by (\ref{f428}). Thus, all cases reduce to the case of $\mathcal{W}(a,b)$(resp. $\mathcal{W}(\frac{3}{2},c)$). We obtain the result by Theorem \ref{t35}.

 This completes the proof.
\end{proof}


Finally, we consider extensions of finite irreducible modules over $TSV(a,b)$ and $TSV(c)$ of the form
\begin{align}
0\longrightarrow \bar{M} \longrightarrow  E\longrightarrow  M \longrightarrow   0\label{T3}
\end{align}
As before, $E$ as a $\mathbb{C}[\partial]$-module in (\ref{T3}) is isomorphic to $ \bar{M}\oplus M$, where $\bar{M}$ is a $TSV(a,b)$(resp. $TSV(c)$)-submodule, and $\bar{M}=\mathbb{C}[\partial]v_{\bar{\alpha}}$,  $M=\mathbb{C}[\partial]v_{\alpha}$ such that the following identities hold in $E$:\\
(1) In the $TSV(a,b)$ case,
\begin{enumerate}[(i)]
\item If $(a,b)\neq (1,0)$,
  \begin{equation}
  L_\lambda v_{\alpha}=(\partial+\alpha\lambda+\beta)v_{\alpha}+f(\partial,\lambda)v_{\bar{\alpha}},\quad Y_\lambda v_{\alpha}=g(\partial,\lambda)v_{\bar{\alpha}},\quad M_\lambda v_{\alpha}=h(\partial,\lambda)v_{\bar{\alpha}};\label{TAB31}
   \end{equation}

  \item If $(a,b)=(1,0)$,
 \begin{equation}
 L_\lambda v_{\alpha}=(\partial+\alpha\lambda+\beta)v_{\alpha}+f(\partial,\lambda)v_{\bar{\alpha}},\quad Y_\lambda v_{\alpha}=\gamma v_{\alpha}+g(\partial,\lambda)v_{\bar{\alpha}},\quad M_\lambda v_{\alpha}=h(\partial,\lambda)v_{\bar{\alpha}};\label{TAB32}
  \end{equation}
\end{enumerate}
where $f(\partial,\lambda), g(\partial,\lambda), h(\partial,\lambda) \in \mathbb{C}[\partial,\lambda]$.\\
(2) In the $TSV(c)$ case,
 \begin{equation}
 L_\lambda v_{\alpha}=(\partial+\alpha\lambda+\beta)v_{\alpha}+f(\partial,\lambda)v_{\bar{\alpha}},\quad Y_\lambda v_{\alpha}=\gamma v_{\alpha}+g(\partial,\lambda)v_{\bar{\alpha}},\quad M_\lambda v_{\alpha}=h(\partial,\lambda)v_{\bar{\alpha}};\label{TC31}
  \end{equation}
where $f(\partial,\lambda), g(\partial,\lambda), h(\partial,\lambda) \in \mathbb{C}[\partial,\lambda]$.

\begin{lemma}\label{l47}
(1) All trivial extensions of finite irreducible $TSV(a,b)$-modules of the form (\ref{T3}) are given by (\ref{TAB31}) and (\ref{TAB32}), and
\begin{enumerate}[(i)]
\item If $(a,b)\neq (1,0)$, $f(\partial,\lambda)$ is a scalar multiple of $(\partial+\alpha\lambda+\beta)\phi(\partial)-(\partial+\bar{\alpha}\lambda+\bar{\beta})\phi(\partial+\lambda)$ and $g(\partial,\lambda)=h(\partial,\lambda)=0$, where $\phi$ is a polynomial.
\item If $(a,b)=(1,0)$, $f(\partial,\lambda)$ and $g(\partial,\lambda)$ are the same scalar multiple of $(\partial+\alpha\lambda+\beta)\phi(\partial)-(\partial+\bar{\alpha}\lambda+\bar{\beta})\phi(\partial+\lambda)$ and $\gamma\phi(\partial)-\bar{\gamma}\phi(\partial+\lambda)$, respectively, where $\phi$ is a polynomial, and $h(\partial,\lambda)=0$.
\end{enumerate}

(2) All trivial extensions of finite irreducible $TSV(c)$-modules of the form (\ref{T3}) are given by (\ref{TC31}), where $f(\partial,\lambda)$ is a scalar multiple of $(\partial+\alpha\lambda+\beta)\phi(\partial)-(\partial+\bar{\alpha}\lambda+\bar{\beta})\phi(\partial+\lambda)$ and $g(\partial,\lambda)=h(\partial,\lambda)=0$, where $\phi$ is a polynomial.
\end{lemma}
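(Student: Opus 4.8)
The plan is to mimic the proof of Lemma \ref{l36} line by line, treating the extra generators $Y$ and $M$ exactly as $W$ was treated there. Suppose the extension (\ref{T3}) is trivial. Then there is an element $v_{\alpha}' = \varphi(\partial)v_{\alpha} + \phi(\partial)v_{\bar{\alpha}} \in E$ with $0 \neq \varphi(\partial) \in \mathbb{C}[\partial]$ and $\phi(\partial) \in \mathbb{C}[\partial]$ generating a $TSV(a,b)$- (resp. $TSV(c)$-) submodule of $E$ isomorphic to $M$; writing $\gamma$ (resp. $\bar{\gamma}$) for the scalar by which $Y$ acts on $M$ (resp. $\bar{M}$), so that $\gamma=\bar{\gamma}=0$ except in the case $(a,b)=(1,0)$, this says
\begin{align*}
L_\lambda v_{\alpha}' = (\partial + \alpha\lambda + \beta)v_{\alpha}', \qquad Y_\lambda v_{\alpha}' = \gamma v_{\alpha}', \qquad M_\lambda v_{\alpha}' = 0 .
\end{align*}

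Next I would expand the left-hand sides using $\mathbb{C}[\partial]$-sesquilinearity together with the module relations (\ref{TAB31}) (resp. (\ref{TAB32}), (\ref{TC31})) and compare the coefficients of $v_{\alpha}$ and $v_{\bar{\alpha}}$. The coefficient of $v_{\alpha}$ in the $L$-equation gives $\varphi(\partial+\lambda) = \varphi(\partial)$ for all $\lambda$, hence $\varphi$ is a nonzero constant $c$. The equation $M_\lambda v_{\alpha}' = 0$ then reads $c\,h(\partial,\lambda) = 0$, so $h(\partial,\lambda)=0$ in every case. The $Y$-equation gives $c\,g(\partial,\lambda) = 0$ when $\gamma=\bar{\gamma}=0$ (the cases $(a,b)\neq(1,0)$ for $TSV(a,b)$, and all of $TSV(c)$), whence $g(\partial,\lambda)=0$; and when $(a,b)=(1,0)$ its $v_{\bar{\alpha}}$-coefficient is $c\,g(\partial,\lambda) + \bar{\gamma}\phi(\partial+\lambda) = \gamma\phi(\partial)$, so $g(\partial,\lambda)$ is $c^{-1}$ times $\gamma\phi(\partial)-\bar{\gamma}\phi(\partial+\lambda)$. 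Finally the $v_{\bar{\alpha}}$-coefficient of the $L$-equation is $c\,f(\partial,\lambda) + (\partial+\bar{\alpha}\lambda+\bar{\beta})\phi(\partial+\lambda) = (\partial+\alpha\lambda+\beta)\phi(\partial)$, so $f(\partial,\lambda)$ is $c^{-1}$ times $(\partial+\alpha\lambda+\beta)\phi(\partial)-(\partial+\bar{\alpha}\lambda+\bar{\beta})\phi(\partial+\lambda)$. This shows every trivial extension has the claimed shape. For the converse, given $f,g,h$ of these forms one checks by direct substitution that $v_{\alpha}' := v_{\alpha} - \phi(\partial)v_{\bar{\alpha}}$ (with $\phi$ chosen to absorb the scalar and sign) satisfies the three displayed identities, so $\mathbb{C}[\partial]v_{\alpha}'\cong M$ splits the sequence.

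There is no genuine obstacle here; the entire argument is linear bookkeeping with the three module relations, just as in Lemmas \ref{l36}, \ref{l43} and \ref{l45}. The only point requiring slight attention is the case $(a,b)=(1,0)$, where $Y$ acts by a possibly nonzero scalar on the irreducible modules, so the $Y$-equation is inhomogeneous and produces the coboundary term $\gamma\phi(\partial)-\bar{\gamma}\phi(\partial+\lambda)$ rather than forcing $g=0$ — exactly as in Lemma \ref{l36}(2) — while in the $TSV(c)$ case and the $(a,b)\neq(1,0)$ case of $TSV(a,b)$ both $Y$ and $M$ act trivially, so the corresponding equations are homogeneous and kill $g$ and $h$ outright.
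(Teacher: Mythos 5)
Your proof is correct and follows essentially the same route as the paper, which simply argues "similar to Lemma \ref{l36}": assume a splitting element $v_{\alpha}'=\varphi(\partial)v_{\alpha}+\phi(\partial)v_{\bar{\alpha}}$, compare coefficients of $v_{\alpha}$ and $v_{\bar{\alpha}}$ in the $L$-, $Y$- and $M$-actions to get $\varphi$ constant, $h=0$, and the stated coboundary forms for $f$ and $g$ (with $g=0$ when $\gamma=\bar{\gamma}=0$). Your explicit treatment of the inhomogeneous $Y$-equation in the $(a,b)=(1,0)$ case and the converse check are exactly the bookkeeping the paper leaves implicit.
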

\begin{proof}
Similar to the proof of Lemma \ref{l36}.
\end{proof}

\begin{theorem}\label{t48}
(1) In the $TSV(a,b)$ case,\\
(A) If $(a,b)\neq (1,0)$, nontrivial extensions of finite irreducible $TSV(a,b)$-modules of the form (\ref{T3}) exist if and only if $h(\partial,\lambda)=0$. Moreover, they are given (up to equivalence) by (\ref{TAB31}). The values of $\alpha$ and $\bar{\alpha}$, $\beta$ and $\bar{\beta}$ along with the pairs of polynomials $g(\partial,\lambda)$ and $f(\partial,\lambda)$, whose nonzero scalar multiples give rise to nontrivial extensions, are listed as follows (by replacing $\partial$ by $\partial+\beta$ only in (i) and (iv)):\\
(i) If $\beta-\bar{\beta} =0$, $b\neq0$, then $g(\partial,\lambda)=0$, $f(\partial,\lambda)$ is from the nonzero polynomials of Theorem \ref{t3} with $\alpha, \bar{\alpha} \neq 0$.\\
(ii) If $\beta-\bar{\beta} \neq0$, $\beta-\bar{\beta}+b=0$, $a \neq1$, then $f(\partial,\lambda)=0$ and $g(\partial,\lambda)$ is as follows (where $m$ is the highest degree of $g(\partial,\lambda)$):

\begin{enumerate}[(a)]
\item If $m=0$, then $\alpha-\bar{\alpha}=1-a$ and $g(\partial,\lambda)=1$.
\item If $m=1$, then $\alpha-\bar{\alpha}=2-a$ and $g(\partial,\lambda)=\partial-\frac{1}{1-a}\bar{\alpha}\lambda+\frac{1}{1-a}\bar{\alpha}b+\bar{\beta}$.
\item If $m=2$, then $\alpha=1, \bar{\alpha}=a-2$ and $g(\partial,\lambda)=\partial^2-\frac{1}{1-a}(1+2\bar{\alpha})\partial\lambda-\frac{1}{1-a}\bar{\alpha}\lambda^2+a_{10}\partial+a_{11}\lambda+a_{00}$, where $a_{10}=2\bar{\beta}+\frac{1}{1-a}(1+2\bar{\alpha})b$, $a_{11}=\frac{2b}{1-a}\bar{\alpha}-\frac{1}{1-a}(1+2\bar{\alpha})\bar{\beta}$, and $a_{00}=\bar{\beta}^2+b\bar{\beta}\frac{1}{1-a}(1+2\bar{\alpha})-b^2\frac{1}{1-a}\bar{\alpha}$.
\item If $m=3$, then $\alpha=a=\frac{5}{3}, \bar{\alpha}=-\frac{2}{3}$ and $g(\partial,\lambda)=\partial^3+\frac{3}{2}\partial^2\lambda-\frac{3}{2}\partial\lambda^2-\lambda^3+ a_{20}\partial^2+ a_{21}\partial\lambda+a_{22}\lambda^2+a_{10}\partial+a_{11}\lambda+a_{00}$, where $a_{20}=3\bar{\beta}-\frac{3}{2}b$, $a_{21}=3\bar{\beta}+3b$, $a_{22}=-\frac{3}{2}\bar{\beta}+3b$, $a_{10}=3\bar{\beta}^2-3b\bar{\beta}-\frac{3}{2}b^2$, $a_{11}=\frac{3}{2}\bar{\beta}^2+3b\bar{\beta}-3b^2$, $a_{00}=\bar{\beta}^3-\frac{3}{2}b\bar{\beta}^2-\frac{3}{2}b^2\bar{\beta}+b^3$.
\end{enumerate}
(iii) If $\beta-\bar{\beta} \neq0$, $\beta-\bar{\beta}+b=0$, $a=1$, then $f(\partial,\lambda)=0$ and $g(\partial,\lambda)$ is as follows (where $m$ is the highest degree of $g(\partial,\lambda)$):

\begin{enumerate}[(a)]
\item If $m=0$, then $\alpha-\bar{\alpha}=0$ and $g(\lambda)=1$.
\item If $m=1$, then $\alpha-\bar{\alpha}=1$ and $g(\lambda)=\lambda-b$.
\item If $m=2$, then $\alpha-\bar{\alpha}=2$ and $g(\partial,\lambda)=\partial\lambda-\bar{\alpha}\lambda^2-b\partial+(\bar{\beta}+2b\bar{\alpha})\lambda-(b\bar{\beta}+b^2\bar{\alpha})$.
\item If $m=3$, then $\alpha=1$, $\bar{\alpha}=-2$ and $g(\partial,\lambda)=\partial^2\lambda+3\partial\lambda^2+2\lambda^3-b\partial^2+(2\bar{\beta}-6b)\partial\lambda+(3\bar{\beta}-6b)\lambda^2+(-2\bar{\beta}b+3b^2)\partial+(\bar{\beta}^2-6b\bar{\beta}+6b^2)\lambda-\bar{\beta}^2b+3b^2\bar{\beta}-2b^3$.
\end{enumerate}
(iv) If $\beta-\bar{\beta} =0$, $b=0$, then $f(\partial,\lambda)$ and $g(\partial,\lambda)$ satisfy the conclusions given in Theorem \ref{t38}.\\

(B) If $(a,b)=(1,0)$, nontrivial extensions of finite irreducible $TSV(a,b)$-modules of the form (\ref{T3}) exist if and only if $\gamma=\bar{\gamma}$, $\beta=\bar{\beta}$. Moreover, they are given (up to equivalence) by (\ref{TAB32}). The values of $\alpha$ and $\bar{\alpha}$, $\beta$ and $\bar{\beta}$, $\gamma$ and $\bar{\gamma}$ along with the pairs of polynomials $h(\partial,\lambda)$, $g(\partial,\lambda)$ and $f(\partial,\lambda)$, whose nonzero scalar multiples give rise to nontrivial extensions, are listed as follows (by replacing $\partial$ by $\partial+\beta$):\\
(i) If $\gamma =\bar{\gamma}=0$, then $h(\partial,\lambda)=0$, $f(\partial,\lambda)$ and $g(\partial,\lambda)$ are as follows:
\begin{enumerate}[(a)]
\item If $\alpha-\bar{\alpha}=0$, then $f(\partial,\lambda)=a_0+a_1\lambda$ and $g(\partial,\lambda)=b_0$ with $(a_0, a_1, b_0) \neq (0,0,0)$.
\item If $\alpha-\bar{\alpha}=1$, then $f(\partial,\lambda)=0$ and $g(\partial,\lambda)=b_1\lambda$ with $b_1\neq 0$.
\item If $\alpha-\bar{\alpha}=2$, then $f(\partial,\lambda)=a_3\lambda^2(2\partial+\lambda)$ and $g(\partial,\lambda)=b_2\lambda(\partial-\bar{\alpha}\lambda)$ with $(a_3, b_2) \neq (0,0)$.

\item If $(\alpha,\bar{\alpha})=(1,-2)$, then $f(\partial,\lambda)=a_4\partial\lambda^2(\partial+\lambda)$ and $g(\partial,\lambda)=b_3\lambda(\partial^2+3\partial\lambda+2\lambda^2)$ with $(a_4, b_3) \neq (0,0)$.
\item If $\alpha-\bar{\alpha}=3$ and $\bar{\alpha} \neq -2$, then $f(\partial,\lambda)=a_4\partial\lambda^2(\partial+\lambda)$ and $g(\partial,\lambda)=0$ with $a_4 \neq 0$.
\item If $\alpha-\bar{\alpha}=4$, then $f(\partial,\lambda)=a_5\lambda^2(4\partial^3+6\partial^2\lambda-\partial\lambda^2+\bar{\alpha}\lambda^3)$ and $g(\partial,\lambda)=0$ with $a_5 \neq 0$.

\item If $(\alpha,\bar{\alpha})=(1,-4)$, then $f(\partial,\lambda)=a_6(\partial^4\lambda^2-10\partial^2\lambda^4-17\partial\lambda^5-8\lambda^6)$ and $g(\partial,\lambda)=0$ with $a_6 \neq 0$.

 \item If $\alpha-\bar{\alpha}=6, \alpha = \frac{7}{2}\pm\frac{\sqrt{19}}{2}$, then $f(\partial,\lambda)=a_7(\partial^4\lambda^3-(2\bar{\alpha}+3)\partial^3\lambda^4-3\bar{\alpha}\partial^2\lambda^5-(3\bar{\alpha}+1)\partial\lambda^6-(\bar{\alpha}+\frac{9}{28})\lambda^7)$ and $g(\partial,\lambda)=0$ with $a_7 \neq 0$.

\end{enumerate}
(ii) If $\gamma =\bar{\gamma} \neq0$ and $h(\partial,\lambda)=0$, then $f(\partial,\lambda)$ and $g(\partial,\lambda)$ are as follows:
 \begin{enumerate}[(a)]

 \item If $\alpha=\bar{\alpha}$, then $f(\partial,\lambda)=a_0+a_1\lambda$ and $g(\partial,\lambda)=b_0$ with $(a_0, a_1, b_0)\neq (0,0,0)$.

 \item If $\alpha-\bar{\alpha}=1$, then $f(\partial,\lambda)=a_2\lambda^2$ and $g(\partial,\lambda)=b_1\lambda$ with $(a_2,b_1)\neq (0,0)$.

 \item If $\alpha-\bar{\alpha}=2$, then $f(\partial,\lambda)=\frac{b_2}{\beta}\partial\lambda^2+a_3\lambda^3$ and $g(\partial,\lambda)=b_2\lambda^2$ with $(b_2,a_3)\neq (0,0)$.
 \end{enumerate}
(iii) If $\gamma =\bar{\gamma} \neq0$ and $h(\partial,\lambda)\neq0$, then $f(\partial,\lambda)$ and $g(\partial,\lambda)$ are as follows:
\begin{enumerate}[(a)]

 \item If $\alpha=1$ and $\bar{\alpha}=0$, then $f(\partial,\lambda)=b_1\lambda^2$, $g(\partial,\lambda)=\frac{k}{\gamma}\partial$, $h(\partial,\lambda)=k$, where $k\neq 0$, $b_1 \in \mathbb{C}$.

 \item If $\alpha-\bar{\alpha}=1$ and $\bar{\alpha}\neq 0$, then $f(\partial,\lambda)=0$, $g(\partial,\lambda)=\frac{k}{\gamma}\partial+c_2\lambda$, $h(\partial,\lambda)=k$, where $k\neq 0$, $c_2 \in \mathbb{C}$. \\
 \end{enumerate}

(2) In the $TSV(c)$ case, nontrivial extensions of finite irreducible $TSV(c)$-modules of the form (\ref{T3}) exist if and only if $h(\partial,\lambda)=0$. Moreover, they are given (up to equivalence) by (\ref{TC31}). The values of $\alpha$ and $\bar{\alpha}$, $\beta$ and $\bar{\beta}$ along with the pairs of polynomials $g(\partial,\lambda)$ and $f(\partial,\lambda)$, whose nonzero scalar multiples give rise to nontrivial extensions, are listed as follows (by replacing $\partial$ by $\partial+\beta$ only in (i) and (iii)):\\
(i) If $\beta-\bar{\beta} =0$, $c\neq0$, then $g(\partial,\lambda)=0$, $f(\partial,\lambda)$ is from the nonzero polynomials of Theorem \ref{t3} with $\alpha, \bar{\alpha} \neq 0$.\\
(ii) If $\beta-\bar{\beta} \neq0$, $\beta-\bar{\beta}+c=0$,  then $f(\partial,\lambda)=0$ and $g(\partial,\lambda)$ is as follows (where $m$ is the highest degree of $g(\partial,\lambda)$):

\begin{enumerate}[(a)]
\item If $m=0$, then $\alpha-\bar{\alpha}=-\frac{1}{2}$ and $g(\partial,\lambda)=1$.
\item If $m=1$, then $\alpha-\bar{\alpha}=\frac{1}{2}$ and $g(\partial,\lambda)=\partial+2\bar{\alpha}\lambda-2\bar{\alpha}c+\bar{\beta}$.
\item If $m=2$, then $\alpha=1, \bar{\alpha}=-\frac{1}{2}$ and $g(\partial,\lambda)=\partial^2+2(1+2\bar{\alpha})\partial\lambda+2\bar{\alpha}\lambda^2+a_{10}\partial+a_{11}\lambda+a_{00}$, where $a_{10}=2\bar{\beta}-2(1+2\bar{\alpha})c$, $a_{11}=-4c\bar{\alpha}+2(1+2\bar{\alpha})\bar{\beta}$, and $a_{00}=\bar{\beta}^2-2c\bar{\beta}(1+2\bar{\alpha})+2c^2\bar{\alpha}$.
\end{enumerate}
(iii) If $\beta-\bar{\beta} =0$, $c=0$, then $f(\partial,\lambda)$ and $g(\partial,\lambda)$ satisfy the conclusions given in Theorem \ref{t38} with $a=\frac{3}{2}$.\\
\end{theorem}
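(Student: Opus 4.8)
The strategy is the reduction used throughout Section~4: $\mathbb{C}[\partial]M$ is an abelian ideal with $TSV(a,b)/\mathbb{C}[\partial]M\cong\mathcal{W}(a,b)$ and $TSV(c)/\mathbb{C}[\partial]M\cong\mathcal{W}(\frac32,c)$, so once the component $h(\partial,\lambda)$ describing the $M$-action is forced to vanish, an extension becomes an extension of $\mathcal{W}$-modules and Theorems~\ref{t38}--\ref{t39} apply verbatim. In every case I would first apply the defining relations to $v_\alpha$ to write down the cocycle equations for $f,g,h$, and analyze $h$ first.

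For $TSV(a,b)$ with $(a,b)\neq(1,0)$ and for $TSV(c)$, both $Y$ and $M$ annihilate $v_{\bar\alpha}$, so applying $[Y_\lambda,Y_\mu]$ to $v_\alpha$ (relation (\ref{f44}), resp.\ (\ref{f54})) gives $(\lambda-\mu)h(\partial,\lambda+\mu)=0$, resp.\ $(\lambda-\mu)(\lambda+\mu-2c)h(\partial,\lambda+\mu)=0$, hence $h=0$. Then $\mathbb{C}[\partial]M$ acts by zero on $E$, the remaining equations for $f,g$ are exactly (\ref{f332})--(\ref{f333}), every $\mathcal{W}$-extension lifts back, and by Lemma~\ref{l47} the trivial ones correspond; so the lists in~(1)(A) and~(2) are read off from Theorem~\ref{t39}(A). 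In the $TSV(c)$ case one specializes $b=c$, $a=\frac32$: then $a\neq1$ removes case~(3), $a\neq\frac53$ forces $m\le2$ in the branch $\beta-\bar\beta+c=0$, and the branch $c=0$ invokes Theorem~\ref{t38} with $a=\frac32$. Likewise, whenever $h=0$ the $TSV(1,0)$ problem reduces to the $\mathcal{W}(1,0)$ problem and Theorem~\ref{t39}(B) (which already requires $\gamma=\bar\gamma$ and $\beta=\bar\beta$) gives parts~(B)(i) and~(B)(ii); note $\gamma=\bar\gamma=0$ automatically forces $h=0$ through $[Y_\lambda,Y_\mu]$.

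The genuinely new case is $TSV(1,0)$ with $h\neq0$. Applying $[Y_\lambda,M_\mu]=0$ to $v_\alpha$ gives $\bar\gamma\,h(\partial+\lambda,\mu)=\gamma\,h(\partial,\mu)$, which already forces $\gamma=\bar\gamma\neq0$ and shows $h$ is independent of $\partial$; substituting $h=h(\mu)$ into the $[L_\lambda,M_\mu]$ equation and setting $\mu=0$ then forces $\beta=\bar\beta$, $\alpha-\bar\alpha=1$ and $h\equiv k$, a nonzero constant (this re-proves the ``only if'' direction and the restriction on $\alpha-\bar\alpha$). Crucially, for $\gamma\neq0$ the equation coming from $[L_\lambda,Y_\mu]$ carries the extra term $\gamma\bigl(f(\partial,\lambda)-f(\partial+\mu,\lambda)\bigr)$, so a $\partial$-dependent $f$ cannot coexist with $h=k\neq0$; combined with the Virasoro equation (\ref{f332}) (which for $\alpha-\bar\alpha=1$ leaves, by Theorem~\ref{t3}, only $(\alpha,\bar\alpha)=(1,0)$ with $f=a_0\partial+b_0\partial\lambda+b_1\lambda^2$, or a coboundary) this forces $f=0$ when $\bar\alpha\neq0$ and $f=b_1\lambda^2$ when $\bar\alpha=0$. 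Finally, with $f$ and $h$ pinned down, I would solve the $[L_\lambda,Y_\mu]$ and $[Y_\lambda,Y_\mu]$ equations for $g$ by undetermined coefficients: $[Y_\lambda,Y_\mu]$ fixes the $\partial$-coefficient of $g$ to $k/\gamma$, and the residual $\lambda$-term is a coboundary exactly when $\bar\alpha=0$, giving precisely (B)(iii)(a)--(b).

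The routine parts are the undetermined-coefficient bookkeeping and checking, via Lemma~\ref{l47}, that the listed cocycles are not coboundaries. The one delicate point is this last subcase: one must track how a basis change $v_\alpha\mapsto v_\alpha-\phi(\partial)v_{\bar\alpha}$ that trivializes $f$ or the $\lambda$-part of $g$ simultaneously perturbs the other components, and keep $\bar\alpha=0$ carefully apart from $\bar\alpha\neq0$; this is the only place where the three relations for $L$, $Y$ and $M$ genuinely interact and the reduction to the $\mathcal{W}$-case is unavailable.
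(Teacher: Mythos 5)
Your proposal is correct and follows essentially the same route as the paper's proof: kill $h$ by applying $[Y_\lambda\,,Y_\mu]$ (resp.\ the relations (\ref{f48}), (\ref{f43}), (\ref{f44}) in the $(a,b)=(1,0)$ case) to $v_\alpha$, reduce everything with $h=0$ to Theorems \ref{t38}--\ref{t39} for $\mathcal{W}(a,b)$ and $\mathcal{W}(\frac{3}{2},c)$, and treat the only genuinely new subcase $\gamma=\bar{\gamma}\neq0$, $h=k\neq 0$ by forcing $\alpha-\bar{\alpha}=1$, $\beta=\bar{\beta}$, constraining $f$ via Theorem \ref{t3} and the extra $\gamma$-term in the $[L_\lambda,Y_\mu]$ equation, and solving for $g$ by undetermined coefficients with coboundaries removed via Lemma \ref{l47}. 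This is exactly the paper's argument, including the final normalization distinguishing $\bar{\alpha}=0$ from $\bar{\alpha}\neq0$.
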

\begin{proof}
(1)(A) Applying both sides of (\ref{f44}) to $v_{\alpha}$ gives
\begin{align*}
(\lambda-\mu)h(\partial,\lambda+\mu)=0
\end{align*}
Obviously, $h(\partial,\lambda)=0$. Then it reduces to the case of $\mathcal{W}(a,b)$. We obtain the result by Theorem \ref{t39}(A).

(B) Applying both sides of (\ref{f41}), (\ref{f42}), (\ref{f43}), (\ref{f44}) and (\ref{f48}) to $v_{\alpha}$ gives the following equations:
\begin{align}
(\lambda-\mu)f(\partial,\lambda+\mu) &=(\partial+\alpha\mu+\lambda+\beta)f(\partial,\lambda)+(\partial+\bar{\alpha}\lambda+\bar{\beta})f(\partial+\lambda,\mu) \nonumber \\
&\quad-(\partial+\alpha\lambda+\mu+\beta)f(\partial,\mu)-(\partial+\bar{\alpha}\mu+\bar{\beta})f(\partial+\mu,\lambda),\label{f433}\\
-\mu g(\partial,\lambda+\mu) &=(\partial+\bar{\alpha}\lambda+\bar{\beta})g(\partial+\lambda,\mu)-(\partial+\alpha\lambda+\mu+\beta)g(\partial,\mu)\nonumber\\
&\quad+\gamma f(\partial,\lambda)- \bar{\gamma} f(\partial+\mu,\lambda),\label{f434}\\
-(\lambda+\mu)h(\partial,\lambda+\mu)&=(\partial+\bar{\alpha}\lambda+\bar{\beta})h(\partial+\lambda,\mu)-(\partial+\alpha\lambda+\mu+\beta)h(\partial,\mu),\label{f435}\\
\gamma(g(\partial,\lambda)-g(\partial,\mu))&+\bar{\gamma} (g(\partial+\lambda,\mu)-g(\partial+\mu,\lambda))=(\lambda-\mu)h(\partial,\lambda+\mu),\label{f436}\\
\bar{\gamma}h(\partial+\lambda,\mu)&-\gamma h(\partial,\mu)=0.\label{f437}
\end{align}

\textbf{Case 1.} $\gamma \neq \bar{\gamma}$.

By (\ref{f437}), we obtain that $h(\partial,\lambda)=0$. It reduces to the case of $\mathcal{W}(1,0)$. It corresponds to the trivial extension by Theorem \ref{t39}(B).

\textbf{Case 2.} $\gamma =\bar{\gamma}=0$.

By (\ref{f436}), we obtain that $h(\partial,\lambda)=0$. It reduces to the case of $\mathcal{W}(1,0)$. We obtain the result by Theorem \ref{t39}(B).

\textbf{Case 3.} $\gamma =\bar{\gamma} \neq0$.

If $h(\partial,\lambda)=0$, it reduces to the case of $\mathcal{W}(1,0)$. We obtain the result by Theorem \ref{t39}(B).

If $h(\partial,\lambda)\neq 0$, we can obtain that $h(\partial,\lambda)=h(\lambda)$ by (\ref{f437}). Plugging this into (\ref{f435}) gives
\begin{align}
-(\lambda+\mu) h(\lambda+\mu) =((\bar{\alpha}-\alpha)\lambda-\mu+\bar{\beta}-\beta) h(\mu).\label{f438}
\end{align}
We obtain that $h(\partial,\lambda)=k$ by (\ref{f438}), where $k$ is a nonzero complex number. Plugging this into (\ref{f435}) again, we obtain that $\alpha-\bar{\alpha}=1$ and $\beta-\bar{\beta}=0$. By Theorem \ref{t3} and (\ref{f433}), we can deduce that $f(\partial,\lambda)=a_0\partial+b_0\partial\lambda+b_1\lambda^2$, where $a_0,b_0,b_1\in \mathbb{C}$ if $\alpha=1$ and $\bar{\alpha}=0$ or $f(\partial,\lambda)=0$. For convenience, we put $\bar{\partial}=\partial+\beta$ and let $\bar{f}(\bar{\partial},\lambda)=f(\bar{\partial}-\beta,\lambda)$, $\bar{g}(\bar{\partial},\lambda)=g(\bar{\partial}-\beta,\lambda)$ and $\bar{h}(\bar{\partial},\lambda)=h(\bar{\partial}-\beta,\lambda)$. In what follows we will continue to write $\partial$ for $\bar{\partial}$, $f$ for $\bar{f}$, $g$ for $\bar{g}$ and $h$ for $\bar{h}$. Now we can rewrite (\ref{f433}), (\ref{f434}) and (\ref{f436}) as follows:
\begin{align}
(\lambda-\mu)f(\partial,\lambda+\mu) &=(\partial+\alpha\mu+\lambda)f(\partial,\lambda)+(\partial+\bar{\alpha}\lambda)f(\partial+\lambda,\mu) \nonumber \\
&\quad-(\partial+\alpha\lambda+\mu)f(\partial,\mu)-(\partial+\bar{\alpha}\mu)f(\partial+\mu,\lambda),\label{f439}\\
-\mu g(\partial,\lambda+\mu) &=(\partial+\bar{\alpha}\lambda)g(\partial+\lambda,\mu)-(\partial+\alpha\lambda+\mu)g(\partial,\mu)\nonumber\\
&\quad+\gamma (f(\partial,\lambda)-f(\partial+\mu,\lambda)),\label{f440}\\
\gamma(g(\partial,\lambda)-g(\partial,\mu)&+g(\partial+\lambda,\mu)-g(\partial+\mu,\lambda))=k(\lambda-\mu).\label{f441}
\end{align}

If $\alpha=1$, $\bar{\alpha}=0$ and $f(\partial,\lambda)=a_0\partial+b_0\partial\lambda+b_1\lambda^2$, where $a_0,b_0,b_1\in \mathbb{C}$, plugging this into (\ref{f440}) gives
\begin{align}
-\mu g(\partial,\lambda+\mu) &=\partial g(\partial+\lambda,\mu)-(\partial+\lambda+\mu)g(\partial,\mu)-\gamma (a_0+b_0\lambda)\mu.\label{f442}
\end{align}
Setting $\lambda=0$ in (\ref{f442}) gives $a_0\gamma  \mu=0$. Thus, $a_0=0$. Setting $m=\text{deg} g(\partial,\lambda)$. If $m > 1$, the homogeneous part of degree $m$ in $g(\partial,\lambda)$ meets the following equation:
\begin{align}
-\mu g(\partial,\lambda+\mu) &=\partial g(\partial+\lambda,\mu)-(\partial+\lambda+\mu)g(\partial,\mu).\label{f443}\\
g(\partial,\lambda)-g(\partial,\mu)&+g(\partial+\lambda,\mu)-g(\partial+\mu,\lambda)=0.\label{f444}
\end{align}
Similar to solve the equation (\ref{f338}), there are no solution of (\ref{f443}) when $m>1$ and $\alpha-\bar{\alpha}=1$. Thus, $m\le1$. Setting $\partial=\mu=0$ in (\ref{f442}) gives $g(0,0)=0$. Assume that $g(\partial,\lambda)=c_1\partial+c_2\lambda$, plugging this into (\ref{f441}) and (\ref{f442}) gives $a_0=b_0=0$, $c_1=\frac{k}{\gamma}\neq 0$ and $b_1, c_2 \in \mathbb{C}$. By Lemma \ref{l47}(1)(ii), $f(\partial,\lambda)=h(\partial,\lambda)=0$ and $g(\partial,\lambda)=c_2\lambda$ corresponds to the trivial extension. Then we can assume that $c_2=0$. Thus, $f(\partial,\lambda)=b_1\lambda^2$, $g(\partial,\lambda)=\frac{k}{\gamma}\partial$, $h(\partial,\lambda)=k$, $\alpha=1$, $\bar{\alpha}=0$, $\beta-\bar{\beta}=0$ and $\gamma=\bar{\gamma}\neq 0$, where $k\neq 0$, $b_1 \in \mathbb{C}$.

If $\alpha-\bar{\alpha}=1$ and $\bar{\alpha}\neq 0$, $f(\partial,\lambda)=0$, $g(\partial,\lambda)=\frac{k}{\gamma}\partial+c_2\lambda$, $h(\partial,\lambda)=k$, $\beta-\bar{\beta}=0$ and $\gamma=\bar{\gamma}\neq 0$, where $k\neq 0$, $c_2 \in \mathbb{C}$. \\
(2) Similar to the proof of (1)(A), we can deduce that $h(\partial,\lambda)=0$. Thus, it reduces to the case of $\mathcal{W}(\frac{3}{2},c)$. We obtain the result by Theorem \ref{t39}(A).

This completes the proof.
\end{proof}


\begin{thebibliography}{999}
 \bibitem{BDK}
 {Barakat, A., De sole, and  Kac, V.}:{\it Poisson vertex algebras in the theory of Hamiltonian equations.}
 Japan. J. Math. {\bf 4}, 141--252 (2009).
 \vspace{.1cm}


\bibitem{BPZ}
{ Belavin, A., Polyakov, A., and Zamolodchikov, A.}: {\it Infinite conformal symmetry in two-dimensional quantum field theory.}
 Nucl. Phys. B {\bf 241},  333--380 (1984).
  \vspace{.1cm}

\bibitem{CK}
{ Cheng, S.-J., and Kac, V.}: {\it Conformal modules.}
Asian J. Math. {\bf 1},  181--193 (1997).
 \vspace{.1cm}

\bibitem{CKW1}
{ Cheng, S.-J., Kac, V., and Wakimoto, M.}: {\it Extensions of conformal modules.}
In Topological Fields Theory, Primitive Forms and Related Topics, Progr. Math. Vol. {\bf 160},  Kashiwara, M., et al. Eds. Boston: Birkh\"{a}user, 79--129 (1998).
 \vspace{.1cm}

 \bibitem{CKW2}
{ Cheng, S.-J., Kac, V., and Wakimoto, M.}: {\it Extensions of Neveu-Schwarz conformal modules.}
J. Math. Phys. {\bf 41}, 2271--2294 (2000).
 \vspace{.1cm}

\bibitem{DK}
{ D'Andrea, A. and Kac, V.}: {\it Structure theory of finite conformal algebras.}
Sel. Math. {\bf 4},  377--418 (1998).
 \vspace{.1cm}

\bibitem{Hong}
{ Hong, Y.Y.}: {\it On Schr\"{o}dinger-Virasoro type Lie conformal algebras.}
  Comm. Alg. {\bf 45}, 2821--2836 (2017).
\vspace{.1cm}

\bibitem{KacV}
{ Kac, V.}: {\it Vertex Algebras for Beginners.}
University Lecture Series 10,2nd ed.  (AMS, 1998)
\vspace{.1cm}
	
\bibitem{KacL}
{ Kac, V.}: {\it The idea of locality.} In Physical Application and Mathematical Aspects of Geometry, Groups and Algebras, edited
by H.-D. Doebner et al.(World Science Publisher, Singapore, 1997), pp.16-32.
\vspace{.1cm}

\bibitem{KacF}
{ Kac, V.}: {\it Formal distribution algebras and conformal algebras.} In Proc. 12th Int. Congr. in Mathematical Physics(ICMP'97), Brisbane (International Press Cambridge,1999), pp.80-97.
\vspace{.1cm}

\bibitem{Luo-Hong-Wu}
{ Luo, L., Hong, Y., and Wu, Z.}: {\it Finite irreducible modules of Lie conformal algebras $\mathcal{W}(a,b)$ and some Schr\"{o}dinger-Virasoro type Lie conformal algebras.}
Int. J. Math. (in press), https://doi.org /10.1142/S0129167 X19500265  (2019).

 \bibitem{Ling-Yuan1}
{ Ling, K., and Yuan, L.}: {\it Extensions of modules over the Heisenberg-Virasoro conformal algebra.}
Int.J. Math. {\bf 28}(5), 1750036 (2017).
 \vspace{.1cm}

 \bibitem{Ling-Yuan2}
{ Ling, K., and Yuan, L.}: {\it Extensions of modules over a class of Lie conformal algebras $\mathcal{W}(b)$.}
J. Algebra Appl. (in press), https://doi.org/10.1142/S0219498819501640 (2019).
 \vspace{.1cm}

  \bibitem{LN}
{ Lam Ngau.}: {\it Extensions of modules over supercurrent conformal algebras.}
Comm. Algebra {\bf 29}, 3061--3068 (2001).
 \vspace{.1cm}

\bibitem{Su-Yuan}
{ Su, Y. and Yuan, L.}: {\it Schr\"{o}dinger-Virasoro Lie conformal algebra.}
J. Math. Phys.  {\bf 54}, 053503(2013).
\vspace{.1cm}

\bibitem{Wang-Xu-Xia}
{Wang, W. Xu, Y. and Xia, C. G.}:{\it A class of Schr\"{o}dinger-Virasoro type Lie conformal algebras.}
Int. J. Math. {\bf 26}, 1550058 (2015).
 \vspace{.1cm}

\bibitem{Wu-Yuan}
{Wu, H. and Yuan, L.}: {\it Classification of finite irreducible conformal modules over some Lie conformal algebras related to the Virasoro conformal algebra.} J. Math. Phys. {\bf 58}, 041701 (2017).
\vspace{.1cm}

\bibitem{Xu-Yue}
{Xu, Y. and Yue X.}: {\it $W(a, b)$ Lie conformal algebra and its conformal module of rank one.}
Algebra Colloq.{\bf 22}, 405--412 (2015).
\vspace{.1cm}

 \bibitem{Yuan-Ling}
{Yuan, L., and Ling, K.}: {\it Extensions of Schr\"{o}dinger-Virasoro conformal modules.}
arXiv:1809.04796v1.
 \vspace{.1cm}
\end{thebibliography}
\end{document}